\documentclass{amsart}[12pt]
\usepackage{amssymb,amsmath,amscd,graphicx, color,picture}
\usepackage[hidelinks]{hyperref}
\usepackage{blkarray, tikz}

\makeindex 

\addtolength{\oddsidemargin}{-.75in}
\addtolength{\evensidemargin}{-.75in}
\addtolength{\textwidth}{1.5in}
\addtolength{\topmargin}{-.5in}
\addtolength{\textheight}{1in}

\setcounter{tocdepth}{1}

\newtheorem{theorem}{Theorem}[section]
\newtheorem{lemma}[theorem]{Lemma}
\newtheorem{corollary}[theorem]{Corollary}
\newtheorem{definition}[theorem]{Definition}

\newtheorem{remark}[theorem]{\it Remark}
\newtheorem{example}[theorem]{Example}
\newtheorem{proposition}[theorem]{Proposition}

\numberwithin{equation}{subsection}


\def\tree{\sigma}
\def\gr{\mathrm{gr}}
\def\Gr{\mathrm{Gr}}

\def\SL{\mathrm{SL}}

\def\trop{\mathrm{Trop}}
\def\G{\mathbb{G}}

\def\C{\mathbb{C}}
\def\R{\mathbb{R}}
\def\Z{\mathbb{Z}}

\def\P{\mathbb{P}}
\def\Q{\mathbb{Q}}
\def\A{\mathbb{A}}
\def\B{\mathbb{B}}
\def\O{\mathcal{O}}
\def\L{\mathcal{L}}

\def\r{\mathbf{r}}
\def\d{\mathbf{d}}

\def\a{\mathbf{a}}

\def\bx{\mathbf{x}}

\def\s{\mathbf{s}}
\def\v{\mathfrak{v}}
\def\w{\mathfrak{w}}
\def\o{\mathbf{o}}

\def\ql{\backslash \! \backslash}
\def\qr{\slash \! \slash}

\newif\ifdebug                                                      %
\debugtrue

\begin{document}

\title[Grassmannian compactification]{Tropical geometry and Newton-Okounkov cones for Grassmannian of Planes from compactifications}
\author{Christopher Manon}
\address{University of Kentucky}
\email{Christopher.Manon@uky.edu}

\author{Jihyeon Jessie Yang}
\address{Marian University - Indianapolis}
\email{jyang@marian.edu}

\begin{abstract}
We construct a family of compactifications of the affine cone of the Grassmannian variety of $2$-planes.  We show that both the tropical variety of the Pl\"ucker ideal and familiar valuations associated to the construction of Newton-Okounkov bodies for the Grassmannian variety can be recovered from these compactifications.  In this way, we unite various perspectives for constructing toric degenerations of flag varieties.  
\end{abstract}

\maketitle

\tableofcontents

\section{Introduction}

The study of toric degenerations of Flag varieties is a meeting point for techniques from commutative algebra, algebraic geometry, and representation theory.  Grassmannian varieties, in particular, being that they are often the most straightforward case to study after projective space, provide a testing ground for new constructions of toric degenerations as well as a tractable class of examples for comparisons.   A survey of recent activity in this area can be found in \cite{Fang-Fourier-Littelmann}.    

 In this paper, we study the Grassmannian variety $Gr_2(\C^n)$ of $2$-planes in $\C^n$.   Let $I_{2, n}$ be the Pl\"ucker ideal which cuts out the affine cone $X \subset \A^{\binom{n}{2}}$ of $Gr_2(\C^n)$.     Speyer and Sturmfels (\cite{Speyer-Sturmfels}) provide a comprehensive understanding of the known toric degenerations of $Gr_2(\C^n)$, which are constructed from initial ideals of $I_{2, n}$ and organized by tropical geometry.  In particular, in \cite{Speyer-Sturmfels} it is shown that the maximal cones of the tropical variety $\trop(I_{2, n})$ are in bijection with trivalent trees $\sigma$ with $n$ ordered leaves labeled with $1, \ldots, n$, and that the initial ideal associated with each of these cones is prime and binomial.    We present a distinct construction of this class of well-known toric degenerations using the representation theory of $\SL_2$ (Section \ref{toric}), a quiver variety-style construction of $X$, and a family of compactifications $X_\sigma$ (Section \ref{GIT}), one for each trivalent tree $\sigma$.  Our main result is the following. See \cite{Kaveh-Manon-NOK} and Section \ref{valuations} for the notion of Khovanskii basis. 

\begin{theorem}\label{main}
For each trivalent tree $\sigma$ as above, and a total ordering $<$ on the edges of $\sigma$, we construct:\\
\begin{enumerate}
\item a simplicial cone $C_\sigma$ of discrete, rank $1$ valuations on $\C[X]$ with common Khovanskii basis given by the Pl\"ucker generators of $\C[X]$,\\
\item a rank $2n-3$ discrete valuation $\v_{\sigma, <}$ on $\C[X]$ with Khovanskii basis given by the Pl\"ucker generators of $\C[X]$,\\
\item a compactification $X \subset X_\sigma$ by a combinatorial normal crossings divisor $D_\sigma$ such that $C_\sigma$ is spanned by the divisorial valuations associated to the components of $D_\sigma$, and $\v_{\sigma, <}$ is a Parshin point  valuation (see Section \ref{ppoint}) built from a flag of subvarieties of $X_\sigma$ obtained by intersection components of $D_\sigma$.\\ 
\end{enumerate}

\noindent
Furthermore, the affine semigroup algebra $\C[S_\sigma]$ associated to the value semigroup $S_\sigma$ of $\v_{\sigma, <}$ is presented by the initial ideal corresponding to the cone in $\trop(I_{2, n})$ associated to $\sigma$. 
\end{theorem}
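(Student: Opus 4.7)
My plan is to derive every piece of the theorem from the GIT-style compactification $X_\sigma$ built in Section~\ref{GIT}, letting the combinatorics of the trivalent tree $\sigma$ dictate the valuations, the boundary divisor, and ultimately the initial ideal.

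First I would verify that the boundary $X_\sigma \setminus X$ is a (combinatorial) normal crossings divisor $D_\sigma = \bigcup_{e} D_e$, with prime components $D_e$ indexed by the $2n-3$ edges $e$ of $\sigma$. The quiver-style construction attaches one $\P^1$-factor per edge, and the ``point at infinity'' in that factor sweeps out a single boundary divisor, so the count and the normal crossings condition should follow from the construction together with the smoothness of the generic point of each intersection stratum. Each $D_e$ contributes a discrete rank-$1$ divisorial valuation $\nu_e$ on $\C[X]$, and I would set $C_\sigma := \R_{\geq 0}\langle \nu_e : e \in \sigma \rangle$. Simpliciality will follow from linear independence of the $\nu_e$ when applied to the Plücker generators.

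Next I would compute $\nu_e(p_{ij})$ for each Plücker coordinate $p_{ij}$. Using the $\SL_2$ representation-theoretic presentation of $\C[X]$ from Section~\ref{toric}, I expect this to reduce to a simple combinatorial tree invariant, essentially whether or not the edge $e$ lies on the leaf-to-leaf path between $i$ and $j$ in $\sigma$. Feeding these numbers into the tropical fan of $I_{2,n}$, the rays $\nu_e$ should land exactly on the rays of the maximal Speyer--Sturmfels cone indexed by $\sigma$. To upgrade this to the Khovanskii basis statement for every $\nu \in C_\sigma$, I would show that the initial ideal of $I_{2,n}$ with respect to any such $\nu$ is the prime binomial ideal identified in \cite{Speyer-Sturmfels}, and then invoke the initial-ideal criterion of \cite{Kaveh-Manon-NOK}.

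Given a total ordering $<$ on the edges, I would form the descending flag $X_\sigma \supset D_{e_1} \supset D_{e_1} \cap D_{e_2} \supset \cdots$ of length $2n-3$ and let $\v_{\sigma, <}$ be the resulting Parshin point valuation (Section~\ref{ppoint}). To propagate the Khovanskii basis property from rank~$1$ up to $\v_{\sigma, <}$, I would induct on the flag length: the $\nu_{e_1}$-leading-term map sends $\C[X]$ onto the coordinate ring of the relevant component of $D_{e_1}$, which inherits both a tree-combinatorial description and a shorter Parshin flag, so the inductive hypothesis gives Khovanskii-ness there and lifting is standard once leading coefficients are controlled. The value semigroup $S_\sigma$ is then generated by the vectors $\v_{\sigma, <}(p_{ij})$, and by construction these vectors encode the exponents of the Speyer--Sturmfels toric initial ideal, yielding the desired presentation of $\C[S_\sigma]$. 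The hard part I anticipate is the rank-$1$ Khovanskii step: it requires a clean dictionary between divisorial valuations coming off $X_\sigma$ and weight vectors in $\trop(I_{2,n})$, and getting the leaf-edge versus internal-edge bookkeeping right (while matching the $\SL_2$-weight normalization to the tropical one) is where the construction will have to work the hardest.
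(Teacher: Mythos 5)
Your proposal follows the same broad architecture as the paper (GIT compactification, one boundary divisor per edge, Pl\"ucker values read off path combinatorics, Parshin flag for the high-rank valuation), but a few steps that you treat as automatic are in fact the technical heart of the argument and cannot be waved through.

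First, the claim that the boundary consists of $2n-3$ distinct irreducible divisors $D_e$ meeting in expected codimension is not a consequence of the product structure alone. The construction happens upstairs on $\overline{M}(\sigma)$ (where the boundary divisors $\overline{M}(\sigma,e)$ are honestly of normal crossings type because $\overline{M}(\sigma)$ is smooth and a product), but $D_\sigma \subset X_\sigma$ is the image after a GIT quotient, and one must rule out the possibility that distinct $\overline{M}(\sigma,e)$ map to the same divisor or that some $D_S$ is empty or of the wrong codimension. The paper handles this by showing directly that the ideals $I_S \subset \C[X_\sigma]$ are \emph{prime} (a valuation-theoretic argument using the $W_\sigma(\s)$-grading) and \emph{pairwise distinct}, and the distinctness reduces to a nonvanishing statement for certain invariant spaces $W_\sigma(\omega_S) = \bigotimes_v [V(\cdot)\otimes V(\cdot) \otimes V(\cdot)]^{\SL_2}$, proved by an inductive Pieri-rule argument. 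Also note the boundary in $\overline{\SL}_2$ is $\P^1 \times \P^1$, not a $\P^1$; the ``one $\P^1$ per edge'' picture only matches the leaf-edges. Second, your assertion that $S_\sigma$ is generated by the $\v_{\sigma,<}(p_{ij})$ ``by construction'' is not by construction: it is the statement that every point of the lattice cone $P_\sigma \cap L_\sigma$ of Clebsch--Gordan-admissible edge weightings decomposes as a sum of path weightings $\omega_{ij}$. This is a genuine combinatorial lemma (a tree-metric decomposition argument), and without it neither the Khovanskii basis claim in parts (1)--(2) nor the presentation of $\C[S_\sigma]$ by the Speyer--Sturmfels initial ideal goes through.

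Third, and most structurally important: everything hinges on the multiplication rule $W_\sigma(\s) W_\sigma(\s') \subset \bigoplus_{\s'' \preceq \s + \s'} W_\sigma(\s'')$ \emph{with the $\s + \s'$ component always nonzero} (the Cartan-multiplication surjectivity). This single fact is what upgrades $v_\r$ from a quasi-valuation to a valuation, gives the simpliciality of $C_\sigma$ via the adapted-basis formalism, forces $\v_{\sigma,<}$ to be a genuine valuation of full rank $2n-3$, and identifies $gr_{\v_{\sigma,<}}(\C[X]) \cong \C[S_\sigma]$. Your proposal never isolates or verifies this; the inductive ``propagate Khovanskii-ness down the flag'' strategy would need exactly this nondegeneracy at each stage, and it is cleaner to establish it once as a statement about the filtration on $\C[\SL_2]$. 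Finally, for the Parshin-point identification, the paper's key shortcut is that $\overline{M}(\sigma)$ (not $X_\sigma$, which can be singular) is smooth and a product, so the local uniformizers $t_e$ satisfy $ord_{t_e}(t_{e'}) = 0$ for $e \neq e'$; your induction could work but would have to rediscover this. With these points filled in, your outline would become a correct proof along the paper's lines.
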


\begin{remark}
We observe that by \cite[Lemma 3]{Kaveh-Manon-NOK}, the valuation $\v_{\sigma, <}$ coincides with any homogeneous valuation with value semigroup $S_\sigma$ constructed by one of the many methods used for constructing degenerations of flag varieties. 
\end{remark}

The compactification $X_\sigma$ has a natural description in terms of the geometry of $X$.  In Section \ref{GIT} we construct $X$ as a type of quiver variety coming from a choice of directed structure on the tree $\sigma$.  In particular, each edge of $\sigma$ is assigned a space, either $\SL_2$ or $\A^2$.  The compactification $X_\sigma$ is then the space where these edge coordinates are allowed to take values in a compactification of $\SL_2$ or $\A^2$. We also show that $X_\sigma$ is always Fano (Proposition \ref{Fano}).

To explain our results we recall the elements of two general theories underlying toric degeneration constructions.  As a set, the Berkovich analytification $X^{an}$ of an affine variety (\cite{Berkovich}) is the collection of all rank $1$ valuations on the coordinate ring $\C[X]$ which restrict to the trivial valuation on $\C$.  If $\mathcal{F} = \{f_1, \ldots, f_n\}$ is a set of generators of $\C[X]$, it is well-known (see \cite{Payne}) that the evaluation map $ev_{\mathcal{F}}$ which sends $v \in X^{an}$ to $(v(f_1), \ldots, v(f_n))$ maps onto the tropical variety $\trop(I)$ of $I$,  the ideal of forms which vanish on $\mathcal{F}$.  It is difficult to find a section of this map. The main result of the work of Cueto, Habich, and Werner \cite{CHW} carries out such a construction in the projective setting for $Gr_2(\C^n)$.  Part $(1)$ of Theorem \ref{main} extends to a version of this result on the affine cone $X$.  We define a polyhedral complex of trees $\mathcal{T}(n)$ in Section \ref{coneofvaluations} which is close (up to a lineality space) to the Biller-Holmes-Vogtmann space of phylogenetic trees \cite{Speyer-Sturmfels}, \cite{BHV}.  

\begin{theorem}\label{B-main}
There is a continuous map which identifies $\mathcal{T}(n)$ with a connected subcomplex of the analytification $X^{an}$. The evaluation map defined by the Pl\"ucker generators of $\C[X]$ takes $\mathcal{T}(n)$ isomorphically onto $\trop(I_{2, n})$. 
\end{theorem}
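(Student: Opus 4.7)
The plan is to construct the map $\phi \colon \mathcal{T}(n) \to X^{an}$ one cone at a time. For each trivalent tree $\sigma$, part (1) of Theorem \ref{main} provides a simplicial cone $C_\sigma$ of rank-$1$ valuations on $\C[X]$ whose extreme rays are indexed by the edges of $\sigma$ and are the divisorial valuations $\nu_e$ coming from the components of $D_\sigma$. An edge-weighted tree with combinatorial type $\sigma$ and non-negative weights $w_e$ is then sent to the valuation $\sum_e w_e \nu_e \in X^{an}$, giving a cell-wise candidate for $\phi$ on each maximal cone of $\mathcal{T}(n)$.

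The core issue is to show that these cell-wise maps agree on overlaps and assemble into a continuous map on the full polyhedral complex $\mathcal{T}(n)$. The codimension-one faces of $\mathcal{T}(n)$ correspond to collapsing an internal edge; such a face is shared by exactly two maximal cones coming from the two trivalent trees $\sigma, \sigma'$ related by a Whitehead move. Compatibility across the shared face reduces to showing that the divisorial valuations $\nu_e$ attached to edges present in both $\sigma$ and $\sigma'$ coincide in $X^{an}$. I would argue this by identifying each component of $D_\sigma$ with an intrinsic boundary object of the quiver-theoretic presentation of $X$ from Section \ref{GIT}: leaf-edge divisors are visibly independent of the tree, while interior-edge divisors admit a direct birational comparison across the Whitehead move.

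With $\phi$ continuous, composing with the evaluation map $ev_\mathcal{F}$ lands in $\trop(I_{2,n})$ by the general result of Payne. Because the Pl\"ucker generators are a common Khovanskii basis for $C_\sigma$ by Theorem \ref{main}(1), the restriction of $ev_\mathcal{F}$ to $C_\sigma$ is linear and injective with image a polyhedral cone, and the Speyer-Sturmfels classification identifies this image as the maximal cone of $\trop(I_{2,n})$ indexed by $\sigma$. Since the face posets on both sides are the face poset of the tree complex, $ev_\mathcal{F} \circ \phi$ is a bijective piecewise-linear isomorphism of polyhedral complexes, and connectedness of the image follows from the fact that any two trivalent trees on $n$ leaves are linked by a sequence of Whitehead moves.

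The main obstacle will be the gluing step in paragraph two: making precise that the divisorial valuation $\nu_e$ attached to an internal edge is an invariant of the birational geometry of $X$ rather than an artefact of the particular compactification $X_\sigma$, so that it is unambiguously defined on the shared face of $C_\sigma$ and $C_{\sigma'}$. Once this is in hand, injectivity of $\phi$ reduces to injectivity of $ev_\mathcal{F} \circ \phi$, which in turn is the classical fact that a tree metric on $n$ leaves is determined by its pairwise leaf distances via the four-point condition.
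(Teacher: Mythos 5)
Your high-level strategy mirrors the paper's: build the map cone by cone via divisorial valuations, glue along faces, and then identify the image with $\trop(I_{2,n})$ via the dissimilarity-vector computation and Speyer--Sturmfels. The weak point is exactly the one you flag, and your intended reduction of the gluing step is both too vague and, as stated, not quite sufficient.

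You reduce compatibility across a shared face to showing the individual divisorial valuations $\nu_e$ for common edges coincide across $\sigma$ and $\sigma'$, to be established by ``a direct birational comparison across the Whitehead move.'' Two issues. First, this is a gesture rather than an argument: $X_\sigma$ and $X_{\sigma'}$ are genuinely different projective compactifications built from different quiver data, and there is no obvious birational map between them preserving the relevant boundary components edge by edge. Second, even granting that each $\nu_e$ agrees, this does not immediately give compatibility of the \emph{cones} of valuations, because $v_\r = \sum_e \r(e)\nu_e$ is a sum in the sense of Definition \ref{valsum}, which is defined relative to an adapted direct-sum decomposition of $\C[X]$. The $\sigma$-side uses $\C[X] = \bigoplus_\s W_\sigma(\s)$ and the $\sigma'$-side uses $\bigoplus_{\s'} W_{\sigma'}(\s')$; these are different decompositions, and a priori the $\mathrm{MIN}$-extension step can give different answers. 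So one must show that the valuations on the face factor through a common decomposition, which is a statement about the interaction of the isotypical decompositions, not just about the extreme rays.

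The paper resolves this with a representation-theoretic compatibility lemma rather than a birational comparison: Lemma \ref{furtherdecompose} shows that when $\sigma'$ is obtained from $\sigma$ by contracting $e$, the $\sigma'$-decomposition is a coarsening of the $\sigma$-decomposition (each $W_{\sigma'}(\s')$ is a direct sum of the $W_\sigma(\s)$ restricting to $\s'$). Combined with the computation $v_\r(f) = -\langle \r, \s\rangle$ on each piece (Proposition \ref{multrule}), this yields Lemma \ref{faceinclude}: for $\r$ with $\r(e) = 0$, the valuation $v_{i_e(\r)}$ computed via $\sigma$ equals $v_\r$ computed via the contracted tree $\sigma'$. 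Since the two trivalent trees across a Whitehead move contract to the same tree, their facial valuations agree through this common intermediary, which is exactly the gluing you need. Once you make this precise, the rest of your outline (continuity of each $\phi_\sigma$, Payne's result, the dissimilarity-vector computation, injectivity via the four-point condition) matches the paper's proof of Proposition \ref{pluckerevaluate}.
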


Representation theory provides many constructions which are  useful for construction of toric degenerations.  These methods underlie constructions used by  Alexeev and Brion \cite{Alexeev-Brion}, the Newton-Okounkov construction of Kaveh in \cite{Kaveh}, and the birational sequence approach used in \cite{Fang-Fourier-Littelmann-birational} , \cite{FFL} (see also \cite{Manon-Zhou}).  The construction of $\v_{\sigma, <}$ in Theorem \ref{main} relies instead on properties of the tensor product in the category of $\SL_2$ representations.  These valuations are also used in \cite{Manon-NOK}.   

Work on canonical bases in cluster algebras by Gross, Hacking, Keel, and Kontsevich \cite{GHKK}, and then later used by Rietsch and Williams \cite{RW} and Bossinger, Fang, Fourier, Hering, and Lanini \cite{BFFHL}on Grassmannians also provides a powerful organizing tool for toric degenerations.  In \cite{Kaveh-Manon-NOK}, \cite{GHKK}, and \cite{RW}, compactifications of varieties by nice divisors are linked with the construction of a toric degeneration.  We expect each compactification $X_\sigma$ can be realized via a potential function construction in the manner of \cite{GHKK} and \cite{RW}.  The compactification $X_\sigma$ is also closely related to the compactification of the free group character variety $\mathcal{X}(F_g, \SL_2)$ by a combinatorial normal crossings divisor constructed in \cite{Manon-Outer}.


\section{Background on valuations and tropical geometry}\label{valuations}

In this section we introduce the necessary background on filtrations of commutative algebras and the functions associated to these filtrations, valuations and quasi-valuations.   We recall the critical notions of adapted basis and Khovanskii basis for a valuation, which enable computations with valuations.  We also summarize results of Kaveh and the first author \cite{Kaveh-Manon-NOK}, which directly relate higher rank valuations to tropical geometry.

\subsection{Quasi-valuations and filtrations}

Let $A$ be a commutative domain over $\C$, and let $\Z^r$ be the free Abelian group of rank $r$ endowed with a total group ordering $\prec$ (e.g. the lexicographic ordering).  A (decreasing) algebraic filtration $F$ of $A$ with values in $\Z^r$ is the data of a $\C$-subspace $F_\alpha \subset A$ for each $\alpha \in \Z^r$ such that $F_\alpha \supset F_\beta$ when $\alpha \prec \beta$, $F_\alpha F_\beta \subset F_{\alpha + \beta}$, $\forall \alpha, \beta \in \Z^r$, and $\bigcup_{\alpha \in \Z^r} F_\alpha = A$.  We further assume that $1 \in F_0$ and $1 \notin F_\beta$ when $0 \prec \beta$.  For any $\alpha \in \Z^r$, we let $F_{\succ \alpha}$ be $\bigcup_{\beta \succ \alpha} F_\beta$.  For any such filtration, we can form the \emph{associated graded} algebra:

\begin{equation}
gr_F(A) = \bigoplus_{\alpha \in \Z^r} F_\alpha/F_{\succ \alpha}.\\
\end{equation}  
  
\begin{example}
If $A$ carries a $\Z^r$ grading, $A = \bigoplus_{\alpha \in \Z^r} A_\alpha$, then for any $\prec$ there is a filtration on $A$ defined by setting $F_\alpha = \bigoplus_{\beta \succeq \alpha} A_\beta$. In this case, $gr_F(A)$ is canonically isomorphic to $A$. 
\end{example}

Let $f \in F_\alpha \subset A$ but $f \notin F_{\succ \alpha}$, then we have the initial form $\bar{f} \in F_\alpha/F_{\succ \alpha} \subset gr_F(A)$.  It is straightforward to show that $\overline{fg} = \bar{f} \bar{g}$.  We say that $F$ only takes finite values if such an $\alpha$ exists for every $f \in A$.  We assume from now on that $F$ only takes finite values; this is the case for all the filtrations we consider in this paper.   

\begin{definition}
Let $F$ be a filtration as above. We define the associated quasi-valuation $\v_F: A \setminus \{0\} \to \Z^r$ as follows:\\

\begin{equation}
\v_F(f) = \alpha \ such \ that \ f \in F_\alpha, f \notin F_{\succ \alpha}.\\
\end{equation}

\end{definition}

The function $\v_F$ always has the following properties:\\

\begin{enumerate}
\item $\v_F(fg) \succeq \v_F(f) + \v_F(g)$,\\
\item $\v_F(f + g) \succeq MIN\{\v_F(f), \v_F(g)\}$,\\
\item $\v_F(C) = 0$, $\forall C \in \C$.\\
\end{enumerate}

\noindent
More generally, a function that satisfies $(1)-(3)$ above is called a \emph{quasi-valuation} on $A$.  If $\w:A \setminus \{0\} \to \Z^r$ is a quasi-valuation, we also get a corresponding filtration $F^\w$ defined as follows:\\

\begin{equation}
F^\w_\alpha = \{f \mid \w(f) \succeq \alpha\}.\\
\end{equation}

\noindent
One easily checks that the constructions $F \to \v_F$ and $\w \to F^\w$ are inverse to each other.   Finally, a quasi-valuation $\v$ is said to be a \emph{valuation} if $\v(fg) = \v(f) + \v(g)$, $\forall f, g \in A$.

\subsection{Adapted bases}

Now we recall the notion of an \emph{adapted basis} (\cite[Section 3]{Kaveh-Manon-NOK}).  Adapted bases facilitate computations and allow quasi-valuations to be treated as combinatorial objects.  We continue to use quasi-valuations with values in $\Z^r$, but we observe that the results in this section work with any ordered group. 

\begin{definition}\label{adapted}
A $\C$-vector space basis $\B \subset A$ is said to be \emph{adapted} to a filtration $F$ if $F_\alpha \cap \B$ is a vector space basis for all $\alpha \in \Z^r$. 
\end{definition}
 
If $\{F_\alpha \}$ is a collection of vector subspaces of $A$ (not necessarily forming a filtration) with the property that any intersection $F_\alpha \cap \B$ is a basis of $F_\alpha$, then the same property holds for any vector subspace of $A$ constructed by intersections and sums of the members of $\{F_\alpha\}$.  It immediately follows then that if the $F_\alpha$ form a filtration $F$, then $F_{\succ \alpha} \cap \B$ is a basis of $F_{\succ \alpha}$ and the equivalence classes $\bar{\B}_\alpha$ of basis members $\B \cap F_\alpha \setminus F_{\succ \alpha}$ form a basis of $F_\alpha/F_{\succ \alpha}$.  We let $\bar{\B} \subset gr_F(A)$ be the disjoint union $\sqcup_{\alpha \in \Z^r} \bar{\B}_\alpha$; this is a basis of $gr_F(A)$ which is adapted to the grading by $\Z^r$.   If a quasi-valuation $\v$ corresponds to a filtration $F$ with adapted basis $\B$, we say that $\B$ is adapted to $\v$.  The next proposition summarizes the basic properties of adapted bases. 

\begin{proposition}\label{adaptedbasisproperties}
Let $\v$ be a quasi-valuation with adapted basis $\B$, then:\\

\begin{enumerate}
\item for any $f \in A$ with $f = \sum_i C_i b_i$, $\v(f) = MIN\{\v(b_i) \mid C_i \neq 0\}$,\\
\item if $\B'$ is another basis adapted to $\v$, then any $b \in \B$ has a upper triangular expression in elements of $\B'$,\\
\item if $\B$ is adapted to another quasi-valuation $\w$ and $\v(b) = \w(b)$, $\forall b \in \B$, then $\v = \w$. \\
\end{enumerate}

\end{proposition}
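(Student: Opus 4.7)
The plan is to prove part (1) first and then derive parts (2) and (3) from it (and from the definition of adapted basis), since the filtration-theoretic content of the proposition is really concentrated in (1). All three parts will be formal consequences of the two facts already stated in the paragraph preceding the proposition: namely, that for an adapted basis $\B$, both $F_\alpha \cap \B$ is a basis of $F_\alpha$ and $F_{\succ \alpha} \cap \B$ is a basis of $F_{\succ \alpha}$, so that the equivalence classes $\bar{\B}_\alpha$ of the basis elements of valuation exactly $\alpha$ form a basis of $F_\alpha/F_{\succ \alpha}$.

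For part (1), fix $f = \sum_i C_i b_i$ with only finitely many $C_i \neq 0$ and set $\alpha = \mathrm{MIN}\{\v(b_i) \mid C_i \neq 0\}$. First I would show $\v(f) \succeq \alpha$: each $b_i$ with $C_i \neq 0$ lies in $F_{\v(b_i)} \subseteq F_\alpha$, so $f \in F_\alpha$. For the reverse inequality $\v(f) \preceq \alpha$, I need $f \notin F_{\succ \alpha}$. Split the sum as $f = f_= + f_\succ$ where $f_=$ collects the terms with $\v(b_i) = \alpha$ and $f_\succ$ collects those with $\v(b_i) \succ \alpha$. Then $f_\succ \in F_{\succ \alpha}$, while the nonzero $b_i$ in $f_=$ project to linearly independent elements of $\bar{\B}_\alpha \subset F_\alpha/F_{\succ \alpha}$ by the observation above, so the image of $f$ in $F_\alpha/F_{\succ\alpha}$ is nonzero. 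Hence $f \notin F_{\succ\alpha}$ and $\v(f) = \alpha$, as required.

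For part (2), given $b \in \B$, note that $b \in F_{\v(b)}$. Since $\B'$ is adapted to $\v$, the set $F_{\v(b)} \cap \B'$ is a basis of $F_{\v(b)}$, so $b$ expands as $b = \sum_j C_j b'_j$ with each $b'_j \in F_{\v(b)} \cap \B'$; equivalently $\v(b'_j) \succeq \v(b)$ whenever $C_j \neq 0$. This is exactly the upper-triangular property with respect to the ordering induced by $\v$. Part (3) is then immediate from part (1): for any $f = \sum_i C_i b_i$ with $C_i \neq 0$,
\begin{equation*}
\v(f) = \mathrm{MIN}\{\v(b_i) \mid C_i \neq 0\} = \mathrm{MIN}\{\w(b_i) \mid C_i \neq 0\} = \w(f),
\end{equation*}
using that $\B$ is adapted to $\w$ as well so (1) applies to $\w$, and that $\v$ and $\w$ agree on $\B$.

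There is no serious obstacle here; the only point that needs care is the second half of (1), where one must invoke the stability of the adapted-basis property under intersections to conclude that $F_{\succ \alpha} \cap \B$ really is a basis of $F_{\succ \alpha}$ and hence that the classes $\bar{\B}_\alpha$ descend to a basis of $F_\alpha/F_{\succ \alpha}$. Once that is in hand, everything else is bookkeeping.
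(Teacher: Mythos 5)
Your proof is correct and follows essentially the same line as the paper's. The only cosmetic difference is in the second half of part (1): the paper argues directly that $f \in F_\beta$ for $\beta \succ \alpha$ would force all the $b_i$ with $C_i \neq 0$ into $F_\beta$ (by uniqueness of the expansion in the adapted basis $\B \cap F_\beta$), a contradiction, whereas you pass to the quotient $F_\alpha/F_{\succ\alpha}$ and use that $\bar{\B}_\alpha$ is a basis there; both arguments rest on the same observation about adapted bases stated just before the proposition.
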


\begin{proof}
For part $(1)$, let $\alpha = MIN\{\v(b_i) \mid C_i \neq 0\}$ and note that $f \in F_\alpha$, $\{b_i \mid C_i \neq 0\} \subset F_\alpha$, and $\v(f) \succeq \alpha$. If $f \in F_\beta$ with $\beta \succ \alpha$, then all $b_i \in F_\beta$, which is a contradiction.  
For part $(2)$, let $b \in \B$ have value $\v(b) = \alpha$; then we can write $b = \sum C_i b_i'$ for $b_i' \in \B'$. From part $(1)$ we have $\alpha = MIN\{\v(b_i') \mid C_i \neq 0\}$.  For part $(3)$,  note that for any $f\in A$ with $f = \sum C_i b_i$ we have $\v(f) = MIN\{\v(b_i) \mid C_i \neq 0\} = MIN\{\w(b_i) \mid C_i \neq 0\} = \w(f)$. 
\end{proof}

More generally, if we are given a direct sum decomposition $A = \bigoplus_{i \in I} A_i$ of $A$ as a vector space, we say that this decomposition is adapted to a filtration $F$ if for any $\alpha \in \Z^r$ we have $F_\alpha \cap A_i = A_i$ or $0$. Notice that in this case any selection of basis for each $A_i$ gives a basis adapted to $F$.  It is possible to define a sum operation on the set of all quasi-valuations adapted to a given basis $\B \subset A$.

\begin{definition}\label{valsum}
Let $\v, \w: A\setminus \{0\} \to \Z^r$ be quasi-valuations which share a common adapted decomposition $A = \bigoplus_{i \in I} A_i$, then we define the \emph{sum} $[\v + \w]: A \setminus \{0\} \to \Z^r$ to be $\v(b_i) + \w(b_i)$ for any $b_i \in A_i$.  We extend this to $f = \sum C_ib_i \in A$ by the setting $[\v+\w](f) = MIN\{[\v + \w](b_i) \mid C_i \neq 0\}$.
\end{definition}

\begin{proposition}\label{valsumprops}
The following holds for the sum operation:\\

\begin{enumerate}
\item the sum $\v + \w$ of two quasi-valuations is a quasi-valuation,\\
\item the sum operation is commutative and associative,\\ 
\item for any $\v$, $\sum_{i = 1}^n \v = n\v$,\\
\item the sum has neutral element, the quasi-valuation $\o: A \setminus \{0\} \to \Z^r$ defined by $\o(f) = 0$, $\forall f \in A\setminus \{0\}$,\\
\item the set of quasi-valuations adapted to $\B$ can be identified with the monoid of points in $[\Z^r]^{\B}$ which satisfy $\v(b_i) + \v(b_j) \preceq MIN\{\v(b_k) \mid b_ib_j = \sum C_k b_k, C_k \neq 0\}$.\\
\end{enumerate}
\end{proposition}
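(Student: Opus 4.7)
The plan is to reduce every claim to a statement about basis elements, exploiting Proposition \ref{adaptedbasisproperties}(1), which says that any quasi-valuation adapted to $\mathcal{B}$ evaluates a linear combination as the $\mathrm{MIN}$ of its values on the basis elements that appear. Pick bases for each summand $A_i$ to produce a common adapted basis $\mathcal{B}$ for both $\v$ and $\w$; once $[\v+\w]$ is shown to be a quasi-valuation, Proposition \ref{adaptedbasisproperties}(3) guarantees that its value on any $f \in A$ is determined by the values $\v(b_i)+\w(b_i)$ on $\mathcal{B}$, matching the formula in Definition \ref{valsum}.

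For part (1), the axioms $[\v+\w](C)=0$ and $[\v+\w](f+g) \succeq \mathrm{MIN}\{[\v+\w](f),[\v+\w](g)\}$ are immediate from the $\mathrm{MIN}$ definition. The multiplicativity inequality is the main technical point. I would first verify it on a product of basis vectors: write $b_i b_j = \sum_k C_k b_k$; since $\v$ and $\w$ are quasi-valuations adapted to $\mathcal{B}$, we have $\mathrm{MIN}\{\v(b_k) : C_k \neq 0\} \succeq \v(b_i)+\v(b_j)$ and the analogous statement for $\w$, so for each $k$ with $C_k\ne 0$,
\[
\v(b_k)+\w(b_k) \succeq \bigl(\v(b_i)+\v(b_j)\bigr) + \bigl(\w(b_i)+\w(b_j)\bigr) = [\v+\w](b_i)+[\v+\w](b_j).
\]
Taking $\mathrm{MIN}$ over $k$ gives $[\v+\w](b_ib_j) \succeq [\v+\w](b_i)+[\v+\w](b_j)$. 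For general $f=\sum a_ib_i$ and $g=\sum \beta_jb_j$, expand $fg = \sum_{i,j} a_i\beta_j b_ib_j$, expand each $b_ib_j$ in $\mathcal{B}$, and apply $\mathrm{MIN}$-monotonicity together with the basis-case inequality; the bound $[\v+\w](f)+[\v+\w](g)$ drops out after using that $\mathrm{MIN}$ distributes over sum in the ordered group.

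Parts (2), (3), and (4) are then short. Commutativity and associativity hold on $\mathcal{B}$ by commutativity/associativity of $+$ in $\Z^r$, and extend to $A$ by Proposition \ref{adaptedbasisproperties}(3). Part (3) is an induction on $n$ using (2). For (4) the function $\o$ is evidently a quasi-valuation (any basis is adapted), and $[\v+\o](b)=\v(b)$ for every $b \in \mathcal{B}$, so Proposition \ref{adaptedbasisproperties}(3) forces $\v+\o=\v$. For part (5), an assignment $\phi:\mathcal{B}\to \Z^r$ extends to a quasi-valuation adapted to $\mathcal{B}$ via the $\mathrm{MIN}$ rule, and the two non-trivial quasi-valuation axioms translate respectively into: (i) the subadditivity of $\mathrm{MIN}$ on sums, which is automatic, and (ii) multiplicativity on basis products, which is exactly the stated inequality $\phi(b_i)+\phi(b_j) \preceq \mathrm{MIN}\{\phi(b_k) : C_k\neq 0\}$ where $b_ib_j=\sum C_kb_k$. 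Conversely, any $\v$ adapted to $\mathcal{B}$ satisfies this inequality by axiom (1) applied to $b_ib_j$. This set of inequalities cuts out a submonoid of $[\Z^r]^{\mathcal{B}}$ under coordinate-wise addition (closure under $+$ is preserved because if $\phi,\psi$ both satisfy the inequality, so does $\phi+\psi$ by the same $\mathrm{MIN}$-of-sums argument used in part (1)), completing the identification.

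The main obstacle will be the multiplicativity step in part (1): the $\mathrm{MIN}$ on the left over the product expansion is finer than the $\mathrm{MIN}$s that control $\v$ and $\w$ individually, so one must carefully show that each surviving term is pointwise bounded below by the desired sum before taking $\mathrm{MIN}$. Once this basis-level inequality is in hand, everything else follows formally from Proposition \ref{adaptedbasisproperties}.
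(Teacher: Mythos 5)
Your proof is correct, and on the key step it is actually cleaner than the paper's.  For the multiplicativity inequality in part~(1), the paper expands $fg = \sum T_k b_k$ and chains
\[
[\v+\w](f) + [\v+\w](g) \preceq \mathrm{MIN}\{\v(b_k) \mid T_k \neq 0\} + \mathrm{MIN}\{\w(b_k) \mid T_k \neq 0\} \preceq \mathrm{MIN}\{\v(b_k)+\w(b_k) \mid T_k \neq 0\},
\]
but the first inequality is not justified (and in fact can fail: take $A=\C[x,y]$ with the monomial basis, $\v(x^ay^b)=a$, $\w(x^ay^b)=b$, $f=x+y$, $g=1$; then $[\v+\w](f)+[\v+\w](g)=1$ while $\v(fg)+\w(fg)=0$).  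Your route avoids this entirely by first establishing the inequality on a basis product $b_ib_j=\sum C_k b_k$: since $\mathrm{MIN}\{\v(b_k)\mid C_k\neq 0\}\succeq \v(b_i)+\v(b_j)$ forces \emph{every} term $b_k$ in the expansion to satisfy $\v(b_k)\succeq\v(b_i)+\v(b_j)$ (and likewise for $\w$), you get $\v(b_k)+\w(b_k)\succeq[\v+\w](b_i)+[\v+\w](b_j)$ termwise before taking $\mathrm{MIN}$, and then you lift to general $f,g$ by noting that any $b_k$ with nonzero coefficient in $fg$ appears in some $b_ib_j$ with $a_i\beta_j\neq 0$, so $[\v+\w](b_i)\succeq[\v+\w](f)$ and $[\v+\w](b_j)\succeq[\v+\w](g)$.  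This termwise argument, rather than comparing $\mathrm{MIN}$s directly, is the right way to control the $\mathrm{MIN}$, and it reaches the same (true) conclusion.  Parts (2)--(4) match the paper essentially line for line, and your part (5) fleshes out an argument the paper leaves to the reader; the only thing I would add there is that one should also impose $\phi(1)=0$ (which is forced by the monoid condition applied to $1\cdot 1 = 1$ only in the weak form $\phi(1)\preceq 0$) to guarantee axiom (3) of a quasi-valuation holds for the extension.
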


\begin{proof}
For part $(1)$, clearly $[\v + \w](C) = 0$, $\forall C \in \C$.  For $f, g \in A$, we write $f = \sum C_i b_i$ and $ g= \sum K_i b_i$ for elements $b_i \in A_i$, so that $\v(f) = MIN\{ \v(b_i) \}$ and $\v(g) = MIN\{ \v(b_j)\}$.  The sum $[\v +\w](f + g)$ is then computed  by $MIN\{ \v(b_i) +\w(b_i) \mid C_i + K_i \neq 0\}$; 
this must be larger than $MIN\{ \v(b_i) +\w(b_i) \mid C_i \neq 0\}$ and $MIN\{ \v(b_i) +\w(b_i) \mid K_i \neq 0\}$.  Now, consider the product $fg = \sum C_iK_j b_i b_j$; this is a sum $fg = \sum T_i b_i$ with the property that $\v(f) + \v(g) \preceq MIN\{ \v(b_i) \mid T_i \neq 0\}$  and $\w(f) + \w(g) \preceq MIN\{ \w(b_i) \mid T_i \neq 0\}$.  We have $[\v + \w](f) + [\v + \w](g) \preceq MIN\{ \v(b_i) \mid T_i \neq 0\} + MIN\{ \w(b_i) \mid T_i \neq 0\} \preceq MIN\{ \v(b_i) + \w(b_i) \mid T_i \neq 0\} = [\v + \w](fg)$.

For Parts $(2)-(4)$, we observe that this operation is commutative and associative by definition.  It is easy to check $[\sum_{i = 1}^n \v](b_i) = n\v(b_i)$ for any $b_i \in A_i$; this implies that $[\sum_{i = 1}^n \v](f) = n\v(f)$ for any $f \in A$.  Similarly, $[\v + \o](b_i) = \v(b_i)$ for any $b_i \in A_i$; this implies that $\o$ is a neutral element.  For part $(5)$, we leave it to the reader to consider the map which sends $\v$ to the tuple $(\v(b) \mid b \in \B) \in [\Z^r]^{\B}$. 
\end{proof}

Proposition \ref{valsumprops} illustrates how quasi-valuations with a common adapted basis tend to work well with each other. The following lemma shows a similar phenomenon. 

\begin{lemma}\label{adaptedquasi}
If $\v_1, \v_2$ are quasi-valuations which are adapted to the same basis $\B \subset A$, then the function $\bar{\v}_2$ on $gr_{\v_1}(A)$
which assigns $\v_2(b)$ to $\bar{b} \in \bar{\B} \subset gr_{\v_1}(A)$ also defines a quasi-valuation.  
\end{lemma}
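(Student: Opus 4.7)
The plan is to define $\bar{\v}_2$ on all of $gr_{\v_1}(A)$ extending the prescribed values on $\bar{\B}$ by the standard $MIN$ rule, and then check the three quasi-valuation axioms in turn.

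First I would invoke the paragraph preceding Proposition \ref{adaptedbasisproperties}: since $\B$ is adapted to $\v_1$, the disjoint union $\bar{\B} = \sqcup_{\alpha \in \Z^r} \bar{\B}_\alpha$ is a basis of $gr_{\v_1}(A)$, in fact one adapted to the $\Z^r$-grading. Hence every $\bar{h}\in gr_{\v_1}(A)$ has a unique expansion $\bar{h} = \sum_i C_i \bar{b}_i$ with $b_i \in \B$, and I would set
\[
\bar{\v}_2(\bar{h}) \;=\; MIN\{\v_2(b_i)\mid C_i\neq 0\}.
\]
Axiom (3) is immediate: $\bar{1}\in\bar{\B}$ and $\v_2(1)=0$. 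Axiom (2) follows exactly as in Proposition \ref{adaptedbasisproperties}(1): the support of $\bar{h}_1+\bar{h}_2$ in $\bar{\B}$ is contained in the union of the supports of $\bar{h}_1$ and $\bar{h}_2$, and the $MIN$ over a subset is $\succeq$ the $MIN$ over the larger set.

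The substantive step is sub-multiplicativity, axiom (1). By axiom (2) and bilinearity, it suffices to verify it on two basis elements $\bar{b}_i,\bar{b}_j$. Writing the product in $A$ as $b_ib_j = \sum_k K_k b_k$ and reducing modulo $F_{\succ \alpha}$ for $\alpha=\v_1(b_i)+\v_1(b_j)$, only those basis elements of $\v_1$-degree exactly $\alpha$ survive, so inside $gr_{\v_1}(A)$
\[
\bar{b}_i\bar{b}_j \;=\; \sum_{k\,:\,\v_1(b_k)=\alpha} K_k \bar{b}_k.
\]
Thus $\bar{\v}_2(\bar{b}_i\bar{b}_j) = MIN\{\v_2(b_k) \mid K_k\neq 0,\ \v_1(b_k)=\alpha\}$. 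Since $\B$ is also adapted to $\v_2$, Proposition \ref{adaptedbasisproperties}(1) applied to the full expansion $b_ib_j=\sum K_kb_k$ gives $\v_2(b_ib_j) = MIN\{\v_2(b_k)\mid K_k\neq 0\}$, and the quasi-valuation property for $\v_2$ yields $\v_2(b_ib_j) \succeq \v_2(b_i)+\v_2(b_j)$. Consequently $\v_2(b_k)\succeq \v_2(b_i)+\v_2(b_j)$ for every $k$ with $K_k\neq 0$, and taking $MIN$ over a subset of these indices preserves the inequality, giving $\bar{\v}_2(\bar{b}_i\bar{b}_j) \succeq \bar{\v}_2(\bar{b}_i) + \bar{\v}_2(\bar{b}_j)$.

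The only real obstacle is keeping the bookkeeping straight in the multiplicativity step: one must correctly identify $\bar{b}_i\bar{b}_j$ as the image of $b_ib_j$ in the single graded piece $F_\alpha/F_{\succ\alpha}$ rather than as $b_ib_j$ itself, which amounts to throwing away precisely those $b_k$'s that could spoil the inequality. Once this is set up, the two adaptedness hypotheses play complementary roles---adaptedness to $\v_1$ yields the basis of $gr_{\v_1}(A)$, and adaptedness to $\v_2$ lets us evaluate $\v_2$ of any basis expansion via $MIN$---and no further ingredients are required.
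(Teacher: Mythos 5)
Your proof is correct and follows essentially the same route as the paper's: the decisive observation in both is that the expansion of $\bar{b}_i\bar{b}_j$ in $gr_{\v_1}(A)$ is a truncation of the expansion of $b_ib_j$ in $A$, so that taking $MIN$ of $\v_2$-values over the surviving (smaller) support set can only increase, and then adaptedness of $\B$ to $\v_2$ supplies the inequality $\v_2(b_i)+\v_2(b_j)\preceq MIN\{\v_2(b_k)\}$. You simply spell out the reduction to basis elements and the verification of the other two axioms, which the paper leaves implicit.
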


\begin{proof} We only have to check that we have $\bar{\v}_2(\bar{b}_i\bar{b}_j)\succeq \bar{\v}_2(\bar{b}_i) + \bar{\v}_2(\bar{b}_j)$. Now,  $\bar{\v}_2(\bar{b}_i\bar{b}_j) = MIN\{ \bar{\v}_2(\bar{b}_k) \mid \bar{b}_i \bar{b}_j = \sum C_k\bar{b}_k, C_k \neq 0\}$. But the equation $\bar{b}_i \bar{b}_j = \sum C_k\bar{b}_k$ is a truncation of the corresponding expansion of $b_ib_j$ in $A$, where the associated inequality holds, that is, $\v_2(b_i) + \v_2(b_j) \preceq MIN\{\v_2(b_k) \mid b_ib_j = \sum C_k b_k, C_k \neq 0\}$.
\end{proof}

The sum operation on quasi-valuations is easy to work with when dealing with tensor products of algebras.   Let $\v_1$ be a quasi-valuation on $A_1$ and $\v_2$ be a quasi-valuation on $A_2$, and let $F^1, F^2$ be the corresponding filtrations.  We get two filtrations $\mathcal{F}^1,\mathcal{F}^2$ on $A_1 \otimes_\C A_2$ by setting $\mathcal{F}^1_\alpha = F^1_\alpha \otimes A_2$ and $\mathcal{F}^2_\alpha = A_1 \otimes F^2_\alpha$, with corresponding quasi-valuations $\v_1, \v_2$. By picking adapted bases (this is always possible for the algebras we consider in this paper) $\B_1$ and  $\B_2$ we obtain a basis $\B = \{ b_i \otimes b_j \mid b_i \in \B_1, b_j \in \B_2\} \subset A_1 \otimes_\C A_2$ which is simultaneously adapted to $\v_1$ and $\v_2$. 

\begin{lemma}\label{tensorsum}
For $\B, \v_1$, and $\v_2$ as above,  we have $gr_{\v_1 + \v_2}(A_1\otimes_\C A_2) \cong gr_{\v_1}(A_1) \otimes_\C gr_{\v_2}(A_2)$. Moreover, $\v_1 + \v_2$ is independent of the choice of bases $\B_1$ and  $\B_2$. 
\end{lemma}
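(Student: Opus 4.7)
The plan is to establish both parts of the lemma by giving a description of the filtration $F^{\v_1+\v_2}$ that is manifestly independent of the choice of $\B_1$ and $\B_2$. By Definition \ref{valsum} and Proposition \ref{valsumprops}, the common adapted decomposition $A_1 \otimes_\C A_2 = \bigoplus_{(i,j)} \C (b_i \otimes b_j)$ gives $[\v_1+\v_2](b_i \otimes b_j) = \v_1(b_i) + \v_2(b_j)$, and extending by the MIN rule yields
$$
F^{\v_1 + \v_2}_\gamma \;=\; \mathrm{span}_\C\bigl\{\, b_i \otimes b_j \;:\; \v_1(b_i) + \v_2(b_j) \succeq \gamma\,\bigr\}.
$$
The key step is to identify this subspace with the basis-free expression
$$
\sum_{\alpha + \beta \,\succeq\, \gamma} F^1_\alpha \otimes_\C F^2_\beta,
$$
where the sum ranges over all $(\alpha,\beta) \in \Z^r \times \Z^r$. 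One inclusion holds because, since $\B_1$ and $\B_2$ are adapted, $F^1_\alpha \otimes F^2_\beta$ has basis $\{b_i \otimes b_j : \v_1(b_i) \succeq \alpha,\, \v_2(b_j) \succeq \beta\}$, each of whose elements satisfies $\v_1(b_i) + \v_2(b_j) \succeq \alpha+\beta \succeq \gamma$ (here the order $\prec$ is a group order). For the reverse inclusion, a basis vector $b_i \otimes b_j$ with $\v_1(b_i) + \v_2(b_j) \succeq \gamma$ already lies in the summand with $(\alpha,\beta) = (\v_1(b_i),\v_2(b_j))$. The right-hand side involves only the filtrations $F^1, F^2$, so $F^{\v_1+\v_2}$, and hence $\v_1+\v_2$, is independent of adapted basis choices, giving the ``Moreover'' assertion.

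For the graded isomorphism, I would compare graded bases degree by degree. Since $\B$ is adapted to $\v_1+\v_2$, the piece $F^{\v_1+\v_2}_\gamma / F^{\v_1+\v_2}_{\succ\gamma}$ has basis $\{\overline{b_i \otimes b_j} : \v_1(b_i) + \v_2(b_j) = \gamma\}$. On the other side, $gr_{\v_1}(A_1) \otimes_\C gr_{\v_2}(A_2)$ carries the total $\Z^r$-grading with $\deg(\bar b_i \otimes \bar b_j) = \v_1(b_i) + \v_2(b_j)$, and its degree-$\gamma$ piece $\bigoplus_{\alpha+\beta=\gamma}(F^1_\alpha/F^1_{\succ\alpha}) \otimes (F^2_\beta/F^2_{\succ\beta})$ has basis $\{\bar b_i \otimes \bar b_j : \v_1(b_i) + \v_2(b_j) = \gamma\}$. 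The assignment $\phi(\overline{b_i \otimes b_j}) = \bar b_i \otimes \bar b_j$ matches these bases; multiplicativity reduces to the identity $(b_i \otimes b_j)(b_k \otimes b_l) = (b_ib_k) \otimes (b_jb_l)$, after which expanding $b_ib_k$ in $\B_1$ and $b_jb_l$ in $\B_2$ and passing to associated graded pieces yields identical expressions on both sides.

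The main obstacle I anticipate is the first step --- recognizing that the apparently basis-dependent span $F^{\v_1+\v_2}_\gamma$ admits the intrinsic description as a sum over bifiltration pieces; everything else is bookkeeping with adapted bases once this is established. A minor care point is that $\v_1, \v_2$ are only quasi-valuations, so a product $\bar b_i \bar b_k$ may vanish in $gr_{\v_1}(A_1)$ (precisely when $\v_1(b_ib_k) \succ \v_1(b_i) + \v_1(b_k)$); this causes no trouble but must be tracked when checking that $\phi$ respects multiplication.
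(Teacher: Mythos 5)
Your proof is correct and follows essentially the same route as the paper: identify the two sides as vector spaces via the product adapted basis, then check that the graded multiplications coincide because $\v_1$ and $\v_2$ act on separate tensor factors. Where you add value is the intrinsic identification $F^{\v_1+\v_2}_\gamma = \sum_{\alpha+\beta \succeq \gamma} F^1_\alpha \otimes F^2_\beta$, which cleanly proves the ``moreover'' claim that the paper explicitly leaves to the reader; your observation that a vanishing product $\bar b_i \bar b_k = 0$ in $gr_{\v_1}(A_1)$ is matched by a vanishing image on the tensor-product side (since the constraints $\v_1(b_m) \succ \v_1(b_i)+\v_1(b_k)$ and $\v_2(b_n) \succeq \v_2(b_j)+\v_2(b_l)$ force the total degree to strictly exceed $\gamma$) is exactly the point glossed over by the paper's phrase that ``the lower terms \ldots are the same way on both sides.''
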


\begin{proof}
Clearly as vector spaces we have $gr_{\v_1 + \v_2}(A_1\otimes_\C A_2) \cong gr_{\v_1}(A_1) \otimes_\C gr_{\v_2}(A_2)$. So, it remains
to show that the multiplication operations on both sides coincide.  This follows from the fact that $\v_1$ only sees the first tensor component, and $\v_2$ only sees the second tensor component; this in turn implies that the lower terms of the product  $[b_1\otimes b_1'][b_2\otimes b_2'] = [b_1b_2 \otimes b_1'b_2']$ are the same way on both sides.  We leave the second statement to the reader. 
\end{proof}

Finally, we will need the following notion of \emph{Khovanskii basis}.

\begin{definition}[Khovanskii basis]
We say $\mathcal{B} \subset A$ is a Khovanskii basis for a quasi-valuation $\v$ if $gr_\v(A)$ is generated by the equivalence
classes $\bar{\mathcal{B}} \subset gr_\v(A)$ as an algebra over $\C$. 
\end{definition}

\subsection{Weight valuations and the tropical variety}\label{weightvaluation}

For the following construction see \cite[Section 4]{Kaveh-Manon-NOK}. We assume that $A$ is presented as the image of a polynomial ring: $\pi: \C[\bx] \to A$, with kernel $Ker(\pi) = I$. Here $\bx = \{x_1, \ldots, x_n\}$ is a system of parameters.  We make the further assumption that $A$ is a positively graded domain.  Recall the notion of initial form $in_w(f)$ of a polynomial $f \in \C[\bx]$ and initial ideal $in_w(I)$ associated to a weight vector $w \in \Q^n$.  We will require the notion of the Gr\"obner fan $\mathcal{G}(I)$ associated to $I$; recall that this is a  complete polyhedral fan in $\Q^n$ whose cones index the initial ideals of $I$.  In particular, we have $in_w(I) = in_{w'}(I)$ for any $w, w'$ which are members of the relative interior of the same cone in $\mathcal{G}(I)$.  For this and other notions from Gr\"obner theory see \cite{GBCP} and \cite{Maclagan-Sturmfels}.

The tropical variety $\trop(I)$ can be identified with a subfan of $\mathcal{G}(I)$ given by those cones whose associated initial ideals contain no monomial (see \cite{Speyer-Sturmfels}, \cite{Maclagan-Sturmfels}).   The tropical variety $\trop(I)$, and more generally the Gr\"obner fan $\mathcal{G}(I)$ of the ideal $I$ help to organize the quasi-valuations on $A$ with Khovanskii basis $\pi(\bx) = \mathcal{B}$ by realizing all such functions as so-called weight quasi-valuations.

\begin{definition}[Weight quasi-valuations]
For $w \in \Q^n$ the weight quasi-valuation on $A = \C[\bx]/I$ is defined on $f \in A$ as follows:

\begin{equation} 
\v_w(f) = MAX\{MIN\{ \langle w, \alpha \rangle | \ p(\bx) = \sum C_\alpha \bx^\alpha, C_\alpha \neq 0\} \mid \pi(p) = f\}
\end{equation}
\end{definition}

We summarize the properties of weight quasi-valuations that we will need in the following proposition (see \cite[Section 4]{Kaveh-Manon-NOK}). We let $gr_w(A)$ denote the associated graded algebra of $\v_w$. 

\begin{proposition}\label{mainKhovanskii}
Let $A$ be a positively graded algebra presented as $\C[\bx]/I$ for a prime ideal $I$, then:\\

\begin{enumerate}
\item for any $w \in \Q^n$, $gr_w(A) \cong \C[\bx]/in_w(I)$,\\
\item $\v_w$ is adapted to any standard monomial basis of $A$ associated to a monomial ordering on $I \subset \C[\bx]$ which refines $w$,\\
\item $\v: A \setminus \{0\} \to \Q$ is a quasi-valuation with Khovanskii basis $\mathcal{B} = \pi(\bx)$ if and only if $\v = \v_w$ for some $w \in \Q^n$. 
\end{enumerate}

\end{proposition}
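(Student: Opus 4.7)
My plan is to address each part by comparing the filtration defined by $\v_w$ on $A$ with the familiar initial-ideal filtration on $\C[\bx]$, and then bootstrap from there to the Khovanskii basis characterization.

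For part $(1)$, I would first lift to $\C[\bx]$: set $\tilde{F}_\alpha = \mathrm{span}\{\bx^\beta \mid \langle w, \beta \rangle \succeq \alpha\}$, a $\Q$-filtration whose associated graded is $\C[\bx]$ again (with the $w$-grading).  The definition of $\v_w$ amounts to $F_\alpha = \pi(\tilde{F}_\alpha)$ on $A$, so the associated graded $gr_w(A)$ is computed as $gr(\C[\bx])/gr(I \cap \tilde{F}_\bullet)$. The classical Gr\"obner identification $gr(I \cap \tilde{F}_\bullet) = in_w(I)$ then gives $gr_w(A) \cong \C[\bx]/in_w(I)$. This step is where one should be careful to verify the "max over preimages" in the definition of $\v_w$ indeed produces a filtration coming from $I$, rather than merely from some generating set.

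For part $(2)$, fix a term order $\prec$ refining $w$, with Gr\"obner basis $G$ for $I$, and let $\B$ be the corresponding standard monomial basis of $A$.  For a standard monomial $m = \pi(\bx^\beta)$, no other representative of $m$ in $\C[\bx]$ can have all its monomials of strictly larger $w$-weight, since subtracting elements of $I$ with $w$-leading terms in the standard monomial complement would cancel the standard representative itself; hence $\v_w(m) = \langle w, \beta \rangle$.  Writing $f = \sum C_\beta m_\beta$ in the standard basis, the minimum $\min\{\v_w(m_\beta) \mid C_\beta \neq 0\}$ is achieved by $f$ itself, and one checks $F_\alpha \cap \B$ spans $F_\alpha$, giving the adaptedness condition of Definition \ref{adapted}.

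For part $(3)$, the "if" direction is immediate from part $(1)$, since $gr_w(A) \cong \C[\bx]/in_w(I)$ is generated by the images of the $x_i$, i.e., by $\overline{\pi(\bx)}$.  For the "only if", let $\v$ be a quasi-valuation with Khovanskii basis $\pi(\bx)$ and set $w_i = \v(\pi(x_i))$.  The quasi-valuation axioms give $\v(f) \succeq \v_w(f)$ for every $f$ by evaluating on any polynomial preimage of $f$.  The reverse inequality is the delicate direction: the Khovanskii basis hypothesis says every element of $gr_\v(A)$ is a polynomial in the $\overline{\pi(x_i)}$, which forces the relations among the $\pi(x_i)$ in $A$ to have at least one term realizing the minimum $w$-weight; together with part $(1)$, this implies that the standard monomial basis from part $(2)$ is also adapted to $\v$ with $\v(m_\beta) = \langle w, \beta \rangle$, and Proposition \ref{adaptedbasisproperties}$(3)$ concludes $\v = \v_w$.

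The main obstacle is the converse in part $(3)$: unpacking the Khovanskii basis condition to obtain a numerical equality of quasi-valuations rather than just an inequality.  Concretely, one must verify that the polynomial expressions witnessing generation of $gr_\v(A)$ by $\overline{\pi(\bx)}$ can be lifted compatibly with both filtrations, so that no "hidden" higher-weight contribution can push $\v(f)$ strictly above $\v_w(f)$.
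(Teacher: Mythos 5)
The paper does not prove Proposition~\ref{mainKhovanskii} itself---it is cited from \cite{Kaveh-Manon-NOK}---so there is no in-paper argument to compare against. Your overall strategy of lifting the $w$-filtration from $\C[\bx]$ and reasoning via Gr\"obner theory is the standard one, and parts $(1)$ and $(2)$ are essentially right in outline. In $(2)$ the key step should be made precise: if $p$ is a preimage of a standard monomial $\pi(\bx^\beta)$ all of whose terms have $w$-weight strictly greater than $\langle w,\beta\rangle$, then $in_w(p - \bx^\beta) = -\bx^\beta$, so $\bx^\beta \in in_w(I)$; applying $in_\prec$ and the refinement identity $in_\prec(in_w(I)) = in_\prec(I)$ gives $\bx^\beta \in in_\prec(I)$, contradicting standardness. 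The same argument on the lowest-weight terms of a general standard expansion gives both the value formula and adaptedness.

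The genuine gap---one you flag yourself at the end---is the converse direction in $(3)$. Your claim that the Khovanskii basis condition ``forces the relations among the $\pi(x_i)$ in $A$ to have at least one term realizing the minimum $w$-weight'' is not an argument: taken literally it is vacuous (every finite set of weights has a minimum), and in any case it does not by itself deliver $\v(\pi(\bx^\beta)) = \langle w,\beta\rangle$ for standard monomials $\bx^\beta$, which is what you need before Proposition~\ref{adaptedbasisproperties}$(3)$ can be applied. What is missing is a lift-and-iterate argument that uses the positive grading hypothesis, which you never invoke. For $f$ with $\v(f)=\alpha$, the Khovanskii basis condition supplies a $w$-homogeneous polynomial $q_1(\bx)$ of $w$-weight $\alpha$ with $\v(f-\pi(q_1))\succ\alpha$; iterating yields $q_1, q_2, \ldots$ with strictly increasing $w$-weights and, once termination is established, a preimage $Q=\sum q_i$ of $f$ all of whose terms have weight $\succeq\alpha$, whence $\v_w(f)\succeq\v(f)$ and hence equality. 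Termination is exactly where positivity is needed (so that the filtrations restricted to finite-dimensional graded pieces of $A$ are finite), and arranging the iteration to be compatible with the $\Z$-grading is itself a nontrivial point. Without this step the inequality $\v\succeq\v_w$ remains only an inequality.
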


If $in_w(I)$ is a prime ideal, then part $(1)$ of Proposition \ref{mainKhovanskii} implies that $\v_w$ is a valuation.  In this case we say that the cone $C_w$ of the Gr\"obner fan containing $w$ in its relative interior is a \emph{prime cone}.  With a mild assumption (each element of $\bx$ is a standard monomial), we can conclude that $C_w \subset \trop(I)$.

\section{Constructions for $\SL_2$ and $\A^2$}\label{sl2}

In this section we define compactifications of $\SL_2$ and $\A^2$ which are stable under the group actions on these spaces (respectively by $\SL_2 \times \SL_2$ and $\SL_2$).  The divisorial valuations defined by the boundaries of these compactifications are used as building blocks in both the tropical and Newton-Okounkov constructions we give for the Grassmannian variety, and the compactifications themselves are key ingredients in the construction of the projective variety $X_\sigma$.  Accordingly, the constructions presented here for $\SL_2$ and $\A^2$ provide a reference point for the main results of the paper.  

\subsection{Representations of $\SL_2$}

Recall that $\SL_2$ is a simple algebraic group over $\C$.  This implies that any finite dimensional representation $V$ of $\SL_2$ decomposes uniquely into a direct sum of irreducible representations:

\begin{equation}
V \cong \bigoplus_{n \geq 0} Hom_{\SL_2}(V(n), V)\otimes V(n).\\
\end{equation}

\noindent
The representation $V(n)$ is the irreducible representation of $\SL_2$ associated to the dominant weight $n \in \Z_{\geq 0}$.  The representation $V(n)$ is isomorphic to the $n$-th symmetric power $Sym^n(\C^2)$; in particular $V(0)$ is isomorphic to $\C$ equipped with the trivial action by $\SL_2$. The vector space $Hom_{\SL_2}(V(n), V)$ is the space of $\SL_2$-maps from the irreducible $V(n)$ into $V$, which is called the multiplicity space of $V(n)$ in $V$.  The space $Hom_{\SL_2}(V(0), V)$ is called the space of $\SL_2$-invariants in $V$, which is also denoted by $V^{\SL_2}$.

For any two $\SL_2$-representations $V$ and  $W$, we can consider the tensor product $V\otimes W$ equipped with the diagonal action $g\circ (v\otimes w) = g\circ v \otimes g \circ w$.  Similarly, the vector space of homomorphisms $Hom(V, W)$ is naturally equipped with a representation structure; in particular, the dual vector space $V^* = Hom(V, V(0))$ is called the dual representation.  For any $n \in \Z_{\geq 0}$ we have $V(n)^* \cong V(n)$. 
 These operations endow the category $Rep(\SL_2)$ of finite dimensional $\SL_2$-representations with the structure of a symmetric, monoidal, semi-simple category with dualizing object $V(0)$.  It is an important problem for any such category to determine the rule for decomposition of a tensor product of irreducible representations into irreducibles:

\begin{equation}
V(j)\otimes V(k) = \bigoplus_{i \geq 0} Hom_{\SL_2}(V(i), V(j)\otimes V(k))\otimes V(i).\\
\end{equation}

We have $Hom_{\SL_2}(V(i), V(j)\otimes V(k)) \cong Hom_{\SL_2}(V(0), V(i)^*\otimes V(j) \otimes V(k)) \cong [V(i)\otimes V(j) \otimes V(k)]^{\SL_2}$ using the properties of tensor product and duals, so this problem can be reduced to computing the invariant spaces $[V(i)\otimes V(j) \otimes V(k)]^{\SL_2}$.  The following formula can be derived from the \emph{Pieri rule}, \cite[6.1]{Fulton-Harris}:

\begin{equation}\label{pieri}
[V(i)\otimes V(j) \otimes V(k)]^{\SL_2} \cong \Big\{ \begin{array}{ll} \C & \text{if }  i + j + k \in 2\Z, \ \  |i - j| \leq k \leq i + j, \\ 0 & \text{otherwise}\\ \end{array} 
\end{equation}

\noindent
We refer to $i + j + k \in 2\Z$ as the \emph{parity condition} on a triple of integers.  We say that $(i, j, k)$ satisfy the \emph{triangle inequalities} if $0 \leq i, j, k$ and $ |i - j| \leq k \leq i + j$; this is because these are precisely the conditions needed to guarantee that $i, j, k$ can be the sides of a Euclidean triangle.

\subsection{Coordinate algebras of $\SL_2$ and $\A^2$}

Recall the isotypical decomposition of the coordinate ring of $\SL_2$ as an $\SL_2 \times \SL_2$-representation:

\begin{equation}
\C[\SL_2] = \bigoplus_{n \geq 0} V(n) \otimes V(n).\\
\end{equation}

\noindent
The multiplication operation $m: \C[\SL_2] \otimes \C[\SL_2] \to \C[\SL_2]$ is not graded by dominant weight, but the dominant weights still define a filtration. For any $n$ and $m \in \Z_{\geq 0}$ we have:

\begin{equation}\label{lowermult}
m \Big( [V(m)\otimes V(m)] \otimes [V(n) \otimes V(n)]\Big) \subset \bigoplus_{k \leq n +m} V(k) \otimes V(k)
\end{equation}

\noindent
In particular, the projection of $m \Big( [V(m)\otimes V(m)] \otimes [V(n) \otimes V(n)]\Big)$ onto $V(n+m)\otimes V(n+m)$ is an instance of the so-called \emph{Cartan multiplication} operation on tensor products of irreducible representations, and is never $0$ (see \cite[Section 3]{HMM}).  There is an algebraic filtration of $\C[\SL_2]$ by the spaces:

\begin{equation}
F_m = \bigoplus_{n \leq m} V(n) \otimes V(n).\\
\end{equation}

\noindent
Using Equation \ref{lowermult}, it is straightforward to check that $m(F_m \otimes F_n) \subset F_{m+n}$.

Let $U \subset \SL_2$ be the group of upper triangular $2\times 2$ matrices with $1$'s along the diagonal.  Using right multiplication by elements of $U$, any element of $\SL_2$ can be taken to a matrix whose entries depend only on the two entries in the first column.  Since both of these entries cannot be zero, we find that $\SL_2/U \cong \A^2 \setminus \{0\}$.   Since the origin is a codimension-$2$ subvariety of $\A^2$, we have an isomorphism of the algebra of $U$-invariants $\C[\SL_2]^U$ with the coordinate ring of $\A^2$; namely a polynomial ring on two variables. 

The group $U$ acts on the right hand component of each tensor product $V(n) \otimes V(n) \subset \C[\SL_2]$.  As each $V(n)$ is irreducible, with a one-dimensional subspace of highest weight vectors, the space $V(n)^U$ has dimension $1$,  so $V(n)\otimes V(n)^U \cong V(n)$.  It follows that $\C[\A^2] = \C[SL_2]^U$ has the following isotypical decomposition:

\begin{equation}\label{isoA2}
\C[\A^2] = \bigoplus_{n \geq 0} V(n).\\
\end{equation} 

\noindent
Indeed, $V(n) \cong Sym^n(\C^2)$, so equation \ref{isoA2} is the direct sum decomposition of the polynomial ring on two variables into its homogeneous components.   The multiplication operation on $\C[\A^2]$, just normal polynomial multiplication, is accordingly the Cartan multiplication operation for $\SL_2$: $V(n) \otimes V(m) \to V(n + m)$.  This grading endows $\A^2$ with an action by $\G_m$ on the right in addition to its natural action by $\SL_2$ on the left. In particular, $t \in \G_m$ acts on $f \in V(n)$ by the rule $f \circ t = ft^n$.

The associated graded algebra $gr_F(\C[\SL_2])$ of the filtration $F$ has an identical isotypical decomposition to $\C[\SL_2]$,

\begin{equation}
gr_F(\C[\SL_2]) = \bigoplus_{n \geq 0} V(n) \otimes V(n).\\
\end{equation}

\noindent
The difference between these two algebras is found in their multiplication operations, where the multiplication in $gr_F(\C[\SL_2])$ is computed by the Cartan multiplication operation.  Following \cite[Section 3]{HMM} and \cite{Popov}, we say that $gr_F(\C[\SL_2])$ is the coordinate algebra of the horospherical contraction $\SL_2^c$ of $\SL_2$.   The coordinate ring $\C[\SL_2^c]$ can also be constructed by means of invariant theory.  We have $\G_m$ act antidiagonally through the right actions on two copies of the coordinate ring of $\A^2$.  In particular, for $t \in \G_m$ and $f \in V(n)\otimes V(m) \subset \C[\A^2] \otimes \C[\A^2]$ we have $f \circ t = ft^{m-n}$. The only components which are invariant under this action are those with $m = n$. The coordinate ring of the horospherical contraction $\SL_2^c$ can be constructed by taking invariants with respect to this action:

\begin{equation}
\C[\SL_2^c] =  \Big[ \C[\A^2] \otimes \C[\A^2] \Big]^{\G_m}.\\
\end{equation}

\subsection{Valuations on $\C[\SL_2]$ and $\C[\A^2]$}

The algebra $\C[\SL_2^c]$ is a domain, so it follows that the filtration $F$ defines a valuation $v: \C[\SL_2]\setminus \{0\} \to \Z$.  This valuation is computed on a regular function $f \in \C[\SL_2]$ with $f = \sum f_n$, $f_n \in V(n)\otimes V(n)$, by the rule:

\begin{equation}
v(f) = MIN\{ -n \mid f_n \neq 0\}. \\ 
\end{equation}

\noindent
Abusing notation, we say that $\C[\SL_2^c]$ is the associated graded algebra of $v$.  Likewise, the algebra $\C[\A^2]$ is equipped with its degree valuation $deg: \C[\A^2]\setminus\{0\} \to \Z$, which  is computed using almost the same formula; for $f \in \C[\A^2]$ with $f = \sum f_n$, $f_n \in V(n)$, we have $deg(f) = MIN\{-n \mid f_n \neq 0\}$.  Notice that this is the \emph{negative} of the homogeneous degree function on $\C[\A^2]$.  Where $v$ is an $\SL_2 \times \SL_2$-invariant valuation on $\C[\SL_2]$, $deg$ is invariant with respect to the action of $\SL_2 \times \G_m$ on $\A^2$.  This will feature prominently in our constructions involving the Pl\"ucker algebra.  

Now we define the \emph{Rees algebra} of the valuation $v$:

\begin{equation}
R = \bigoplus_{m \geq 0} F_mt^m = \bigoplus_{m \geq n \geq 0} V(n)\otimes V(n)t^m.\\
\end{equation}

\noindent
The parameter $t \in V(0)\otimes V(0)t \subset F_1t$ acts by ``shifting" the copy $V(n)\otimes V(n)t^m$ of the space $V(n)\otimes V(n) \subset F_m$ to the copy of the same space $V(n)\otimes V(n)t^{m+1} \subset F_{m+1}t^{m+1}$.  Since $t$ is not a $0$-divisor, this action makes $R$ into a flat $\C[t]$-module. For the following see \cite[Section 3]{HMM}. 

\begin{lemma}\label{ReesFactsSL2}
The following hold for the $\C[t]$ action on $R$.\\ 

\begin{enumerate}
\item $\frac{1}{t}R \cong \C[\SL_2]\otimes \C[t, t^{-1}]$,\\
\item $R/tR \cong gr_F(\C[\SL_2]) \cong \C[\SL_2^c]$. \\
\end{enumerate}
\end{lemma}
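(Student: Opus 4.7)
The plan is to treat the statement as a routine but fundamental computation about Rees algebras of exhaustive filtrations, carefully unwinding the definition $R = \bigoplus_{m \geq 0} F_m t^m$ and the action of the parameter $t$ that merely shifts a copy of $V(n) \otimes V(n)$ sitting in degree $m$ to the copy sitting in degree $m+1$. The two claims will be proved by constructing explicit $\C[t]$-algebra homomorphisms and then using that $\bigcup_m F_m = \C[\SL_2]$ together with the fact that $\C[\SL_2]$ is a domain.

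For part (1), I would construct the homomorphism $\Phi: R \to \C[\SL_2] \otimes \C[t, t^{-1}]$ that sends a homogeneous element $f t^m \in F_m t^m \subset R$ to $f \otimes t^m$, where on the right $f$ is viewed inside $\C[\SL_2]$ via the inclusion $F_m \hookrightarrow \C[\SL_2]$. This is clearly a $\C[t]$-algebra map, and because $t \in R$ is not a zero divisor and its image $1 \otimes t$ is a unit in $\C[\SL_2] \otimes \C[t,t^{-1}]$, the map factors through the localization $\Phi_t: t^{-1} R \to \C[\SL_2]\otimes \C[t,t^{-1}]$. Surjectivity is where exhaustiveness enters: given $f \otimes t^k$ with $f \in \C[\SL_2]$, pick $m$ with $f \in F_m$; then $f t^m \in R$ and $t^{k-m}\cdot (f t^m) = f \otimes t^k$ in the localization. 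Injectivity follows because $\Phi$ is $\Z$-graded (after tensoring with $\C[t,t^{-1}]$ and using the degree-$m$ piece $F_m t^m$) and in each degree the map $F_m \hookrightarrow \C[\SL_2]$ is an inclusion.

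For part (2), I would observe that $tR = \bigoplus_{m \geq 1} F_{m-1} t^m$ inside $R$, because $t$ shifts $F_{m-1} t^{m-1}$ isomorphically onto a copy of $F_{m-1}$ sitting in the $t^m$ piece. Passing to the quotient then gives
\begin{equation*}
R/tR \;=\; \bigoplus_{m \geq 0} F_m/F_{m-1} \;=\; gr_F(\C[\SL_2]),
\end{equation*}
with the convention $F_{-1} = 0$. The multiplication on $R/tR$ is induced from that of $R$ and matches, by construction, the multiplication on the associated graded algebra. The identification $gr_F(\C[\SL_2]) \cong \C[\SL_2^c]$ is exactly the one already recorded in the preceding discussion, where the Cartan multiplication rule on $\bigoplus_n V(n)\otimes V(n)$ defines the horospherical contraction and is also realized as the $\G_m$-invariants $[\C[\A^2] \otimes \C[\A^2]]^{\G_m}$.

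There is not a real obstacle here; the only point requiring care is checking that $t$ genuinely acts as the shift $F_{m-1} t^{m-1} \hookrightarrow F_{m-1} t^m$ with image precisely giving $tR \cap F_m t^m = F_{m-1} t^m$, which ensures the quotient recovers the associated graded rather than something larger. Once this is verified, both parts reduce to bookkeeping with the graded pieces $V(n)\otimes V(n) t^m$, and no further input beyond the domain property of $\C[\SL_2^c]$ (which guarantees $t$ is not a zero divisor in $R$ and thus that the localization and the quotient behave as expected) is needed.
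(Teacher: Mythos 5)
Your proposal is correct. The paper itself does not prove this lemma inline; it simply cites \cite[Section 3]{HMM}, so there is no in-paper argument to compare against. What you give is the standard direct argument for Rees algebras of exhaustive filtrations: the shift description of $tR$, the degree-wise identification $F_m/F_{m-1}$ for part (2), and localization plus exhaustiveness for part (1). Both parts are carried out correctly, including the verification that $tR \cap F_m t^m = F_{m-1} t^m$, which is exactly the point where the quotient becomes the associated graded rather than something larger.

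One minor slip in the final remark: the fact that $t$ is not a zero divisor in $R$ does not follow from $\C[\SL_2^c]$ being a domain. It follows most directly from the shift $V(n)\otimes V(n) t^m \mapsto V(n)\otimes V(n) t^{m+1}$ being injective, or equivalently from $R$ being a subring of the domain $\C[\SL_2][t]$. That $\C[\SL_2^c] = R/tR$ is a domain is then a separate fact (equivalent to $tR$ being prime), and would be a consequence rather than a cause of the non-zero-divisor property. This does not affect the proof, since everything you actually use (injectivity degree-by-degree, the shift description of $tR$) is established independently.
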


\noindent
Part $(1)$ of Lemma \ref{ReesFactsSL2} says that away from the origin we have $R/(t-a)R \cong \C[SL_2]$, whereas part $(2)$ says at the special fiber $R/tR$ we obtain $\C[SL_2^c]$.  

In coordinates $\C[\SL_2] \cong \C[a, b, c, d]/\langle ad-bc -1\rangle$ for $a, b, c, d \in V(1)\otimes V(1)$.  Cartan multiplication must be surjective (the image is irreducible), so it follows that $a, b, c, d \in V(1)\otimes V(1)$ generate $\C[\SL_2^c]$ as well.  Picking coordinates $V(1) \cong \C\{x, y\}$ we can set $a = x \otimes x$, $b = x \otimes y$, $c = y\otimes x$, and $d = y \otimes y$ (i.e. these are the ``matrix entries" of a $2\times 2$ matrix).  Computing in $\C[\SL_2^c]$ we see that $ad - bc = (x\otimes x)(y \otimes y) - (x \otimes y)(y \otimes x)$.  In the coordinate ring $\C[\A^2]\otimes \C[\A^2]$ this is $(xy - xy) \otimes (xy - yx) = 0$.  It follows that we can identify $\SL_2^c$ with the singular $2\times 2$ matrices.  If we set $A = at, B = bt, C = ct, D = dt \in R$ we can compute $AD - BC - t^2 = 0$; this defines a presentation of $R$.  Passing from a general point ($t \neq 0$) to the origin ($t = 0$) degenerates $\SL_2$ to $\SL_2^c$: the singular $2\times 2$ matrices. 

\begin{remark}
In \cite{HMM} and \cite{Manon-Outer}, a different Rees family is used.  Instead of $AD - BC - t^2$, the family is presented by $AD-BC - s$, where $s$ is a parameter of homogeneous degree $2$.  In this way, the family we consider, $Spec(R)$, is a double cover of the family cut out by $AD-BC -s$ considered in \emph{loc. cit.}. 
\end{remark}

\subsection{Compactifications}

Now we define a compactification of $\SL_2$ by setting $\overline{\SL}_2 = Proj(R)$.  

\begin{proposition}\label{sl2geometry}
The following are true of the projective scheme $\overline{\SL}_2$:\\
\begin{enumerate}
\item $\overline{\SL}_2$ has an algebraic action by $\SL_2 \times \SL_2$,\\
\item $\overline{\SL}_2$ can be identified with the closed subscheme of $\P^4$ cut out by $AD- BC - t^2 = 0$,\\
\item the $\SL_2\times \SL_2$-stable irreducible divisor $D \subset \overline{\SL}_2$ defined by setting $t = 0$ is isomorphic to $\P^1\times \P^1$,\\
\item $\SL_2$ is isomorphic to the Zariski-open complement of $D$, \\
\item the line bundle $\O(1)$ defined by the divisor $D$ satisfies $H^0(\overline{\SL}_2, \O(m)) \cong F_m$. Furthermore, this line bundle induces $\O(1)\boxtimes\O(1)$ on $D \cong \P^1\times\P^1$,\\
\item the valuation $ord_D: \C[\SL_2] \setminus \{0\} \to \Z$ is equal to $v$. \\
\end{enumerate}

\end{proposition}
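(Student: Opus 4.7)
The plan is to exploit the presentation $R \cong \C[A, B, C, D, t]/\langle AD - BC - t^2 \rangle$ derived in the discussion immediately preceding this proposition. Since $R$ is generated in degree one by these five variables with a single quadric relation, $\Proj(R)$ embeds as a hypersurface in $\P^4$, giving part $(2)$ at once. For part $(1)$, each $F_m = \bigoplus_{n \leq m} V(n) \otimes V(n)$ is $\SL_2 \times \SL_2$-stable, so the group acts on every Rees component $F_m t^m$ by algebra automorphisms fixing $t$, preserving the grading and descending to an algebraic action on $\Proj(R)$.

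For parts $(3)$ and $(4)$ I would read the geometry directly off Lemma \ref{ReesFactsSL2}. Setting $t = 0$ in the presentation yields $R/tR \cong \C[A, B, C, D]/\langle AD - BC \rangle$, the homogeneous coordinate ring of the Segre embedding $\P^1 \times \P^1 \hookrightarrow \P^3$, which gives part $(3)$; the $\SL_2\times\SL_2$-invariance of $D$ follows from the invariance of the line $\C t \subset R_1$. Localizing at $t$ and passing to the degree-zero part identifies the affine chart $\{t \neq 0\}$ of $\overline{\SL}_2$ with $\mathrm{Spec}(\C[\SL_2]) \cong \SL_2$, giving part $(4)$.

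For part $(5)$, the standard ideal-sheaf sequence for the hypersurface $\overline{\SL}_2 \subset \P^4$, together with the vanishing of $H^1$ of line bundles on $\P^4$, yields $H^0(\overline{\SL}_2, \O(m)) \cong R_m = F_m t^m \cong F_m$. The section $t \in R_1 = H^0(\overline{\SL}_2, \O(1))$ has zero locus exactly $D$, so $\O(1) \cong \O(D)$; restricting $\O_{\P^4}(1)$ to the Segre subvariety $D \cong \P^1 \times \P^1$ yields $\O(1) \boxtimes \O(1)$ by the standard property of the Segre embedding.

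Finally, for part $(6)$, the local ring $\O_{\overline{\SL}_2, D}$ at the generic point of $D$ is a DVR in which (the image of) $t$ is a uniformizer, so $ord_D(t) = 1$. For $f \in F_m \setminus F_{m-1}$, the section $f t^m \in R_m$ reduces to $\bar f \neq 0$ in $R_m/tR_m \cong (\C[\SL_2^c])_m$, so $f t^m$ does not vanish generically along $D$ and $ord_D(ft^m) = 0$. Writing $f = (ft^m)/t^m$ as a rational function on $\overline{\SL}_2$ then gives $ord_D(f) = -m = v(f)$. The one mildly delicate step is the identification $H^0(\overline{\SL}_2, \O(m)) \cong R_m$ in part $(5)$, which I would settle via the Koszul/ideal-sheaf cohomology sequence for a hypersurface; every other step is an application of the explicit presentation of $R$ combined with Lemma \ref{ReesFactsSL2}.
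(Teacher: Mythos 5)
Your argument for parts $(1)$--$(5)$ fills in the details that the paper dispatches with the single sentence ``parts $(2)$, $(3)$, $(4)$, and $(5)$ follow from the presentation of $R$ given above,'' using the exact steps one would expect (setting $t=0$, localizing at $t$, the ideal-sheaf sequence for a hypersurface in $\P^4$). These are correct and substantively identical to what the paper intends.

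For part $(6)$ your route is genuinely more direct than the paper's. The paper first computes $ord_D(f) = -n$ for $f$ lying in a single isotypic component $V(n)\otimes V(n)$, using the factorization $ft^m = (1t)^{m-n}(ft^n)$, and then extends to general $f = \sum f_n$ by invoking $\SL_2\times\SL_2$-invariance of $D$ and citing Timashev to justify that an invariant divisorial valuation is computed by taking the minimum over isotypic components. You instead observe that for $f \in F_m\setminus F_{m-1}$, the section $ft^m \in R_m$ has nonzero image in $R_m/(1t)R_{m-1}\cong F_m/F_{m-1}$, hence $ft^m$ does not vanish along $D$; writing $f = (ft^m)/(1t)^m$ then gives $ord_D(f) = -m = v(f)$ for \emph{any} $f$, with no reduction to isotypic components and no appeal to equivariance. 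This is cleaner and logically self-contained, though it hides the representation-theoretic structure the paper emphasizes.

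One step you state without verification is that $\O_{\overline{\SL}_2, D}$ is a DVR with $t$ a uniformizer, which requires $\overline{\SL}_2$ to be regular (or at least normal) at the generic point of $D$. This is true and easy --- the gradient $(D, -C, -B, A, -2t)$ of $AD - BC - t^2$ never vanishes on $\P^4$, so the quadric is smooth --- but it is the one load-bearing fact your argument uses that is not literally read off the presentation, and you should say it. (The paper's proof has the same implicit dependence, so this is a remark about completeness rather than a defect relative to the paper.)
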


\begin{proof}
This is essentially contained in \cite{Manon-Outer}, but we will also give a proof here.  Part $(1)$ follows from the definition of $\overline{\SL}_2$ as $Proj$ of an $\SL_2\times \SL_2$-algebra.  Similarly, parts $(2)$, $(3)$, $(4)$, and $(5)$ follow from the presentation of $R$ given above.  For part $(6)$, we identify $SL_2$ with the open subset $Spec([\frac{1}{t}R]_0) \subset \overline{\SL}_2$. The role of $t$ as a placeholder in the direct sum decomposition of the Rees algebra makes the use of ``$t$'' in this description of $\SL_2$ misleading; to be precise we refer to the regular function $1t \subset V(0)\otimes V(0)t^1$.  Taking $ord_D$ of a regular function measures divisibility by $1t$, so we will determine what degree of $1t$ divides an element $f \in V(n)\otimes V(n)$.   In order to be in the degree-$0$ part of $\frac{1}{t}R$, we must divide $V(n)\otimes V(n)t^m$ by $(1t)^m$ to obtain $\frac{1}{(1t)^m}[V(n)\otimes V(n)t^m]$. However every function in this component is already divisible by $(1t)^{m-n}$, so we obtain $\frac{1}{(1t)^n}[V(n)\otimes V(n)t^n]$; this is the component which maps to $V(n)\otimes V(n)$ under the isomorphism $[\frac{1}{t}R]_0 \cong \C[\SL_2(\C)]$.  It follows that $ord_D(f) = -n$ for any $f \in V(n) \otimes V(n) \subset \C[\SL_2]$. Since $D$ is $\SL_2 \times \SL_2$-invariant, the valuation $ord_D$ is as well; as a consequence (see \cite[Chapter 4]{Timashev}) we compute $ord_D(f)$ for $f = \sum f_n$, $f_n \in V(n) \otimes V(n)$,  by taking $MIN\{ord_D(f_n) \mid f_n \neq 0\}$.   
\end{proof}

A similar statement holds for $\A^2$.  We  form the Rees algebra $S = \bigoplus_{m \geq n \geq 0} V(n)t^m$ with respect to the valuation $deg$, and take $Proj(S)$ to obtain the $\SL_2\times \G_m$-stable compactification $\A^2 \subset \P^2$.  The divisor at infinity in this compactification is $Proj(\C[\A^2]) \cong \P^1$. The sections of this divisor recover $\O(1)$ on both $\P^2$ and the boundary $\P^1$. The valuation computed by taking order along the boundary recovers the degree valuation $deg: \C[\A^2] \setminus \{0\} \to \Z$.

\section{Construction of $X$ and $X_\sigma$}\label{GIT}

In this section we describe a construction of the affine cone $X$ over the Pl\"ucker embedding of the Grassmannian variety $\Gr_2(\C^n)$ which depends on the choice of a tree $\sigma$ with $n$ labeled leaves.  This construction uses aspects of the geometry of $\SL_2$ and $\A^2$ described in Section \ref{sl2}.   We obtain a compactification $X_\sigma \supset X$ by performing the same construction with the compactifications $\overline{\SL}_2 \supset \SL_2$ and $\P^2 \supset \A^2$. In this section we make frequent use of the language of Geometric Invariant Theory (GIT). For background on this subject see the book of Dolgachev: \cite{Dolgachev}.

\subsection{Constructing the affine cone $X$ from a tree $\sigma$}

We fix a tree $\sigma$ with $n$ leaves labeled by $i \in [n]$ with a cyclic ordering $i_1 \to \cdots \to i_n \to i_1$.  Let $V(\sigma)$ be the set of non-leaf vertices of $\sigma$, and $E(\sigma)$ be the set of edges of $\sigma$.  We further define $L(\sigma)$ to be the set of  leaf-edges of $\sigma$, i.e. those edges which connect to a leaf, and $E^{\circ}(\sigma)$ to be the set of   non-leaf edges.  In particular, we have $E(\sigma) = E^{\circ}(\sigma) \sqcup L(\sigma)$.  We let $\ell_i \in L(\sigma)$ denote the leaf-edge which is connected to the leaf labeled $i$.  

We select an  orientation on $\sigma$; in particular, we choose a direction on each $e \in E(\sigma)$ so that the head of $\ell_i \in L(\sigma)$ points toward the leaf $i$.  This information is necessary to construct $X$ and $X_\sigma$, but ultimately the construction is independent of this choice.

\begin{figure}
\begin{tikzpicture}[
every edge/.style = {draw=black,very thick, ->},
 vrtx/.style args = {#1/#2}{%
      circle, draw, thick, fill=white,
      minimum size=5/2mm, label=#1:#2}]
\node(A) [vrtx=left/] at (0, 0) {$v_2$};
\node(B) [vrtx=left/] at (0,4/2) {$v_1$};
\node(C) [vrtx=left/] at (-3.46/2,-2/2) {$v_3$};
\node(D) [vrtx=left/] at (3.46/2, -2/2) {$v_4$};
\node(E) [vrtx=left/] at (-3.46/2,6/2){$\ell_1$};
\node(F) [vrtx=left/] at (3.46/2,6/2){$\ell_2$};
\node(G) [vrtx=left/] at (-6.93/2,0){$\ell_6$};
\node(H) [vrtx=left/] at (-3.46/2,-6/2){$\ell_5$};
\node(I) [vrtx=left/] at (6.93/2,0){$\ell_3$};
\node(J) [vrtx=left/] at (3.46/2,-6/2){$\ell_4$};
\path   (A) edge (B)
           (C) edge (A)
	(A) edge (D)
	(B) edge (E)
	(B) edge (F)
	(C) edge (G)
	(C) edge (H)
	(D) edge (I)
	(D) edge (J);
\end{tikzpicture}
\caption{An oriented tree $\sigma$.}
\end{figure}

We define a space $M(\sigma)$ and an algebraic group $G(\sigma)$ using elements of the tree $\sigma$.  The space $M(\sigma)$ is a product of copies of $\SL_2$ and $\A^2$, with one copy of $\SL_2$ for each non-leaf edge, and one copy of $\A^2$ for each leaf-edge:

\begin{equation}
M(\sigma) = \prod_{e \in E^{\circ}(\sigma)} \SL_2 \times \prod_{\ell \in L(\sigma)} \A^2.\\
\end{equation}

\noindent
Similarly, the group $G(\sigma)$ is a product of copies of $\SL_2$, with one copy of $\SL_2$ for each non-leaf vertex:

\begin{equation}
G(\sigma) = \prod_{v \in V(\sigma)} \SL_2.\\
\end{equation}

\noindent
Now we define an action of $G(\sigma)$ on $M(\sigma)$.  For a non-leaf vertex $v \in V(\sigma)$, we have the corresponding copy of $\SL_2 \subset G(\sigma)$ act on the left hand side of the space assigned to an out-going edge, and on the right hand side of any incoming edge.  Notice that leaf-edges are always assigned a copy of $\A^2$ which comes with an action by $\SL_2 \times \G_m$ as described in Section \ref{sl2}; so for any vertex $v$ connected to a leaf-edge the corresponding copy of $\SL_2$ acts on the left hand side of $\A^2$ by our conventions.

\begin{figure}
\begin{tikzpicture}[
every edge/.style = {draw=black,very thick, ->},
 vrtx/.style args = {#1/#2}{%
      circle, draw, thick, fill=white,
      minimum size=5/2mm, label=#1:#2}]
\node(A) [vrtx=left/] at (0, 0) {};
\node(B) [vrtx=left/] at (0,4/2) {$\SL_2$};
\node(C) [vrtx=left/] at (-3.46/2,-2/2) {$\SL_2$};
\node(D) [vrtx=left/] at (3.46/2, -2/2) {$\SL_2$};
\node(E) [vrtx=left/] at (-3.46/2,6/2){};
\node(F) [vrtx=left/] at (3.46/2,6/2){};
\node(G) [vrtx=left/] at (-6.93/2,0){};
\node(H) [vrtx=left/] at (-3.46/2,-6/2){};
\node(I) [vrtx=left/] at (6.93/2,0){};
\node(J) [vrtx=left/] at (3.46/2,-6/2){};
\node(L) at (.5, 1){$\SL_2$};
\node() at (-1, -.1){$\SL_2$};
\node() at (.8, -.9){$\SL_2$};
\node() at (3.46/2+.5, -2){$\A_2$};
\node() at (-3.46/2+.5, -2){$\A_2$};
\node() at (-1,2){$\A_2$};
\node() at ((-3.46/2 -1, -2/2){$\A_2$};
\node() at ((3.46/2 +.7,-.1){$\A_2$};
\node() at ((1.1, 3.3,){$\A_2$};
\path   (A) edge (B)
           (C) edge (A)
	(A) edge (D)
	(B) edge (E)
	(B) edge (F)
	(C) edge (G)
	(C) edge (H)
	(D) edge (I)
	(D) edge (J);
\end{tikzpicture}
\caption{The space $M(\sigma)$ with action by $G(\sigma)$.}
\end{figure}

\begin{proposition}\label{GITXspace}
For any tree $\sigma$ with $n$ labeled leaves, the GIT quotient $M(\sigma)\qr G(\sigma)$ is isomorphic to $X$. 
\end{proposition}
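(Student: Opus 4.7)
The plan is to compute $\C[M(\sigma)]^{G(\sigma)}$ directly from the isotypical decompositions recorded in Section \ref{sl2}, and match the result with the classical description
\[
\C[X] \;\cong\; \big[\C[\A^2]^{\otimes n}\big]^{\SL_2} \;=\; \bigoplus_{\mathbf{m} \in \Z_{\geq 0}^n} [V(m_1) \otimes \cdots \otimes V(m_n)]^{\SL_2}
\]
coming from the first fundamental theorem of invariant theory for $\SL_2$, in which the Pl\"ucker coordinate $[ij]$ is the $2 \times 2$ bracket on the $i$-th and $j$-th copies of $\A^2$.

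The first step is to expand $\C[M(\sigma)] = \bigotimes_{e \in E^\circ(\sigma)} \C[\SL_2] \otimes \bigotimes_{\ell \in L(\sigma)} \C[\A^2]$ using the isotypical decompositions and regroup the factors by vertex: each internal edge $e$ contributes one copy of $V(n_e)$ to each of its endpoints (splitting the $V(n_e) \otimes V(n_e)$ summand of $\C[\SL_2]$ between them), while each leaf edge $\ell$ contributes $V(n_\ell)$ to its unique non-leaf endpoint. Reductivity of $\SL_2$ then gives
\[
\C[M(\sigma)]^{G(\sigma)} \;=\; \bigoplus_{\mathbf{n} \colon E(\sigma) \to \Z_{\geq 0}} \;\bigotimes_{v \in V(\sigma)} \Big[\bigotimes_{e \sim v} V(n_e)\Big]^{\SL_2}.
\]
Fixing the leaf-edge restriction $\mathbf{m} = \mathbf{n}|_{L(\sigma)}$, I would then show that summing the vertex-wise invariants over the internal-edge weightings recovers $[V(m_1) \otimes \cdots \otimes V(m_n)]^{\SL_2}$. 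This follows by induction on $|E^\circ(\sigma)|$ from the ``cut-and-paste'' identity
\[
\Big[\bigotimes_{i \in A \sqcup B} V(m_i)\Big]^{\SL_2} \;\cong\; \bigoplus_{k \geq 0} \Big[V(k) \otimes \bigotimes_{i \in A} V(m_i)\Big]^{\SL_2} \otimes \Big[V(k) \otimes \bigotimes_{i \in B} V(m_i)\Big]^{\SL_2},
\]
a direct consequence of Schur's lemma, semisimplicity, and $V(k)^* \cong V(k)$; cutting $\sigma$ at an internal edge $e = (u,v)$ produces subtrees $\sigma_u, \sigma_v$ whose invariant rings (by the inductive hypothesis) are glued along the common label $n_e$ via this identity. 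The self-duality $V(k)^* \cong V(k)$ also ensures that the resulting isomorphism does not depend on the orientation chosen on $\sigma$.

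Finally, I would verify that this graded vector space isomorphism $\C[M(\sigma)]^{G(\sigma)} \cong \C[X]$ respects multiplication. Both sides carry products governed by Cartan multiplication of $\SL_2$-representations: on the right through polynomial multiplication in $\C[\A^2]^{\otimes n}$, and on the left through pointwise multiplication in each $\C[\SL_2]$ and $\C[\A^2]$ factor of $M(\sigma)$, which on isotypical summands is again Cartan multiplication (cf.\ Equation \ref{lowermult}). Under the identification above, the Pl\"ucker bracket $[ij]$ corresponds to the invariant in $\C[M(\sigma)]^{G(\sigma)}$ obtained by propagating a $V(1)$ along the unique path in $\sigma$ from leaf $i$ to leaf $j$. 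I expect this multiplicative compatibility to be the main obstacle: it is formally a statement about composition of morphisms in a symmetric monoidal category, and I would make it explicit by fixing weight bases of each $V(n)$ so that iterated vertex contractions reduce to Clebsch-Gordan coefficient computations matching the isotypical expansion of $V(m_1) \otimes \cdots \otimes V(m_n)$.
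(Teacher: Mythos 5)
Your plan takes a genuinely different route from the paper: where you compute $\C[M(\sigma)]^{G(\sigma)}$ algebraically by isotypical decomposition and a ``cut-and-paste'' identity on invariant spaces, the paper argues geometrically via GIT-in-stages, reducing to the general fact that $X\times G\qr G\cong X$ for any $G$-variety $X$ (proved directly by the map $(x,g)\mapsto g^{-1}x$). Your vector space bookkeeping is correct --- the decomposition $\C[M(\sigma)]^{G(\sigma)}=\bigoplus_{\mathbf{n}}\bigotimes_{v}\big[\bigotimes_{e\sim v}V(n_e)\big]^{\SL_2}$ is exactly what appears in (\ref{invariantdecomposition}) and (\ref{vertex decomposition}), and your cut-and-paste identity is Lemma \ref{furtherdecompose}, which in the paper is derived \emph{after} the proposition, as a consequence of the geometric isomorphism.

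The difficulty you rightly flag as ``the main obstacle'' is multiplicativity, but your sketch of how to handle it rests on a false premise. You assert that pointwise multiplication in each $\C[\SL_2]$ factor ``on isotypical summands is again Cartan multiplication,'' citing Equation \ref{lowermult}; but \ref{lowermult} says the opposite: $m\big([V(m)\otimes V(m)]\otimes[V(n)\otimes V(n)]\big)\subset\bigoplus_{k\le m+n}V(k)\otimes V(k)$, i.e.\ the dominant weight decomposition is only a \emph{filtration} of $\C[\SL_2]$, not a grading. Cartan multiplication governs only the associated graded algebra $\C[\SL_2^c]$. Consequently, matching ``Clebsch-Gordan coefficients'' in the top weight on both sides is not enough --- the lower-order terms on the $\C[M(\sigma)]$ side (which arise whenever a non-leaf edge is involved) must also be tracked and matched against the corresponding lower-order terms on the $\C[\A^2]^{\otimes n}$ side, and that is precisely where the work lives. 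The clean way to obtain multiplicativity for free is the one the paper uses: realize the cut-and-paste identity as pullback of regular functions along the embedding $\SL_2^k\hookrightarrow\SL_2^s\times\SL_2\times\SL_2^{k-s}$, $(g_1,\ldots,g_k)\mapsto(g_1,\ldots,g_s,\mathrm{Id},g_{s+1},\ldots,g_k)$ (equivalently, invoke $X\times G\qr G\cong X$). Pullback along a morphism of schemes is automatically an algebra homomorphism, and the isotypical decomposition you wrote down is then a corollary rather than the starting point.
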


\begin{proof}
It is well-known (see \cite{Dolgachev}) that $X$ can be constructed as the GIT quotient $\SL_2 \ql [\A^2 \times \cdots \times \A^2]$; this is equivalent to the fact that the Pl\"ucker algebra is generated by the $2\times2$ minors of a $2\times n$ matrix of parameters.  This quotient can be recovered from the GIT construction above as the case of a tree $\sigma_n$ with $n$ labeled leaves, one non-leaf vertex, and the natural cyclic ordering $1 \to \cdots \to n$. Therefore, to prove the proposition it suffices to show that all of the GIT constructions are isomorphic to $M(\sigma_n)\qr G(\sigma_n)$.  The cyclic ordering does not affect the isomorphism type, so the problem can be reduced to showing the following statement: for any tree $\sigma$ as above, and a tree $\sigma'$ obtained from $\sigma$ by contracting an edge $e \in E^{\circ}(\sigma)$, we have $M(\sigma)\qr G(\sigma) \cong M(\sigma')\qr G(\sigma')$.  

Geometric invariant theory quotients can be performed in stages, so we can further reduce to the case of the trees $\sigma$ with only one non-leaf edge $e$, and $\sigma'$ with no non-leaf edges. Moreover, as $\SL_2\qr U = \A^2$ (see Section \ref{sl2}), we may once again invoke GIT-in-stages to assume each edge of $\sigma$ and $\sigma'$ has been assigned a copy of $\SL_2$.   Let $e \in E^{\circ}(\sigma)$ have vertices $v_1, v_2$, with the orientation along $e$ pointing $v_1 \to v_2$.  Let $v_1$ have leaf edges $\ell_1, \ldots, \ell_s$ and $v_2$ have leaf edges $k_1, \ldots, k_r$.  We orient $\ell_1, \ldots, \ell_s$ and $k_1, \ldots, k_r$ away from $v_1, v_2$. We make this choice without a loss of generality as we have the involution $g \to g^{-1}$, which is an isomorphism on the scheme $\SL_2$ which interchanges the left and right actions.  We let $v \in V(\sigma')$ be the lone non-leaf vertex of $\sigma'$, and by abuse of notation we let $\ell_1, \ldots, \ell_s$ and $k_1, \ldots, k_r$ be its leaf-edges, oriented in the same fashion. 

As above, we define $M(\sigma) = (\prod_{\ell_i} \SL_2) \times \SL_2 \times (\prod_{k_j} \SL_2)$ with an action of $G(\sigma) = \SL_2\times\SL_2$.  Similarly, 
$M(\sigma') = \prod_{\ell_i, k_j} \SL_2$ with an (now entirely left) action by $\SL_2$.   We claim that there is an isomorphism $M(\sigma)\qr \big( \SL_2\times\SL_2 \big)\cong M(\sigma')\qr \SL_2$.  Once more, we appeal to GIT-in-stages and show that $\big(\prod_{\ell_i} \SL_2 \big)\times \SL_2 \qr \SL_2 \cong \prod_{\ell_i}\SL_2 $ as spaces with an action by $\SL_2$.  Here $\SL_2$ acts on the right hand side of the second component of $\big(\prod_{\ell_i} \SL_2 \big)\times \SL_2$, and on the left hand sides of the components of $\prod_{\ell_i}\SL_2$.  

To prove this we show something more general.  Let $X$ be a $G$-variety for a reductive group $G$, and let $G$ act on $X\times G$ diagonally on $X$ and the left hand side of $G$, then $X\times G \qr G$ retains an action of $G$ through the right hand side of $G$ in $X\times G$.  As $G$-varieties we have $X\times G\qr G \cong X$.  To show this, map $(x, g) \in X \times G$ to $g^{-1}x \in X$; this is a map of $G$-spaces which intertwines the right action on $G$ in $X\times G$ with the action on $X$. This map is constant on the orbits of $X\times G$ under the diagonal action, which are in turn all closed; and furthermore there is an algebraic section $X \to X\times G$ sending $x$ to $(x, Id)$ for $Id \in G$ the identity. This proves the result. 
\end{proof}

By Proposition \ref{GITXspace}, each tree $\sigma$ defines a different realization of $X = \SL_2 \ql[ \A^2 \times \cdots \times \A^2]$ with added ``hidden variables" given by the $\SL_2$ components along the non-leaf edges.  The combinatorial and geometric constructions we make for $X$ are then derived from this new information. 

\subsection{The compactification $X_\sigma$}

We define a projective variety $\overline{M}(\sigma)$ using the same recipe used to define $M(\sigma)$:

\begin{equation}
\overline{M}(\sigma) = \prod_{e \in E^{\circ}(\sigma)} \overline{\SL}_2 \times \prod_{\ell \in L(\sigma)} \P^2.\\
\end{equation}

\noindent
The $\SL_2\times \SL_2$ and $\SL_2\times\G_m$ actions on $SL_2$ and $\A^2$ respectively both extend to their compactifications $\overline{\SL}_2$ and $\P^2$.  It follows that there is an action of $G(\sigma)$ on $\overline{M}(\sigma)$.  The line bundles defined in Proposition \ref{sl2geometry} on $\overline{\SL}_2$ and $\P^2$ (both denoted $\O(1)$ by abuse of notation) are linearized with respect to the actions on these spaces; it follows that the outer tensor product bundle $\L = \boxtimes_{e \in E(\tree)} \O(1)$ is $G(\sigma)$-linearized as well.  With these observations in mind we define $X_\sigma$ as the corresponding GIT quotient:

\begin{equation}
X_\sigma = \overline{M}(\sigma) \qr_\L G(\sigma).\\
\end{equation}

Before we show that $X_\sigma$ is indeed a compactification of $X$ (see Proposition \ref{compact}), we give a more detailed description of the coordinate ring $\C[X] = \C[M(\sigma)]^{G(\sigma)}$ and the projective coordinate ring $\C[X_\sigma] = \bigoplus_{n \geq 0} H^0(\overline{M}(\sigma), \L^{\otimes n})^{G(\sigma)}$ in terms of the tree $\sigma$.  In the sequel we will refer to a $\sigma$-weight $\s \in \Z_{\geq 0}^{E(\sigma)}$, which is an assignment of non-negative integers to the edges of $\sigma$.   The following decompositions of the coordinate ring of $\C[M(\sigma)]$ and the projective coordinate ring $\C[\overline{M}(\sigma)] = \bigoplus_{n \geq 0} H^0(\overline{M}(\sigma), \L^{\otimes n})$ can be computed from the isotypical decompositions of $\C[\SL_2]$, $\C[\A^2]$, $\C[\P^2] = \bigoplus_{n \geq 0} H^0(\P^2, \O(n))$ and $\C[\overline{\SL}_2] = \bigoplus_{n \geq 0} H^0(\overline{\SL}_2, \O(n))$:\\

\begin{equation}\label{invariantdecomposition}
\C[X] = \C[M(\sigma)]^{G(\sigma)} = \big[\bigotimes_{e \in E^{\circ}(\sigma)} \C[\SL_2]  \otimes \bigotimes_{\ell \in L(\sigma)} \C[\A^2] \big]^{G(\sigma)} =\\ 
\end{equation}
$$\bigoplus_{\s \in \Z_{\geq 0}^{E(\sigma)}} \big[ \bigotimes_{e \in E^{\circ}(\sigma)} V(\s(e))\otimes V(\s(e)) \otimes \bigotimes_{\ell \in L(\sigma)} V(\s(\ell)) \big]^{G(\sigma)}.$$\\

\begin{figure}
\begin{tikzpicture}[
every edge/.style = {draw=black,very thick, ->},
 vrtx/.style args = {#1/#2}{%
      circle, draw, thick, fill=white,
      minimum size=5/2mm, label=#1:#2}]
\node(A) [vrtx=left/] at (0, 0) {};
\node(B) [vrtx=left/] at (0,4) {$\SL_2$};
\node(C) [vrtx=left/] at (-3.46,-2) {$\SL_2$};
\node(D) [vrtx=left/] at (3.46, -2) {$\SL_2$};
\node(E) [vrtx=left/] at (-3.46,6){};
\node(F) [vrtx=left/] at (3.46,6){};
\node(G) [vrtx=left/] at (-6.93,0){};
\node(H) [vrtx=left/] at (-3.46,-6){};
\node(I) [vrtx=left/] at (6.93,0){};
\node(J) [vrtx=left/] at (3.46,-6){};
\node(L) at (1.5, 2){$V(\s_1)\otimes V(\s_1)$};
\node() at (-2, -.2){$V(\s_2)\otimes V(\s_2)$};
\node() at (1.2, -1.6){$V(\s_3)\otimes V(\s_3)$};
\node() at (3.46+.7, -3.9){$V(\s_4)$};
\node() at (-3.46+.7, -3.9){$V(\s_5)$};
\node() at (-2,4){$V(\s_6)$};
\node() at ((-3.46 -2, -1.6){$V(\s_7)$};
\node() at ((3.46 +1.4,-.6){$V(\s_8)$};
\node() at ((1.5, 5.7){$V(\s_9)$};
\path   (A) edge (B)
           (C) edge (A)
	(A) edge (D)
	(B) edge (E)
	(B) edge (F)
	(C) edge (G)
	(C) edge (H)
	(D) edge (I)
	(D) edge (J);
\draw [red, dashed] (3.46, -2) circle [radius = 2.4];
\end{tikzpicture}
\caption{Isotypical components $W_\sigma(\s) \subset \C[M(\sigma)]^{G(\sigma)}$. The dotted circle contains those $\SL_2$ representations which are acted on by the copy of $\SL_2$ associated to the lower right trinode. }
\end{figure}

\noindent
To ease notation we let $W_{\sigma}(\s) = \big[ \bigotimes_{e \in E^{\circ}(\sigma)} V(\s(e))\otimes V(\s(e)) \otimes \bigotimes_{\ell \in L(\sigma)} V(\s(\ell)) \big]^{G(\sigma)}$, so that $\C[X] = \bigoplus_{\s \in \Z_{\geq 0}^{E(\sigma)}} W_\sigma(\s)$.  Roughly speaking, each $\s \in \Z_{\geq}^{E(\sigma)}$ assigns two irreducible representations $V(\s(e))\otimes V(\s(e))$ to each non-leaf edge $e \in E^{\circ}(\sigma)$, one for the head of $e$ and one for the tail of $e$.  This pair is acted on through the right and left actions of $\SL_2$ on the copy of $\SL_2$ assigned to $e$.  Similarly, $\s$ assigns one representation $V(\s(\ell))$ to each leaf-edge $\ell \in L(\sigma)$; this space is acted on by $\SL_2$ through the left action on $\A^2$. Note that for any $\s \in \Z_{\geq 0}^{E(\sigma)}$, the space $W_\sigma(\s)$ can be written as the following tensor product:

\begin{equation}\label{vertex decomposition}
W_\sigma(\s) = \bigotimes_{v \in V(\sigma)} \big[ V(\s(e_1(v))) \otimes \cdots \otimes V(\s(e_k(v)))\big]^{\SL_2}.
\end{equation}

\noindent
where $e_1(v), \ldots, e_k(v)$ are the edges of $\sigma$ containing $v$.   The following lemma will be useful in Sections \ref{coneofvaluations} and \ref{tropical}.

\begin{lemma}\label{furtherdecompose}
Let $\sigma'$ be a tree with $n$ leaves which is obtained from $\sigma$ by contracting an edge $e \in E(\sigma)$, then there is a corresponding direct sum decomposition:

\begin{equation}
W_{\sigma'}(\s') = \bigoplus_{\{\s \in \Z_{\geq 0}^{E(\sigma)} \ \mid \ \forall e'\in E(\sigma'), \ \s(e') = \s'(e') \}} W_\sigma(\s).\\
\end{equation}

\end{lemma}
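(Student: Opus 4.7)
The plan is to apply the vertex decomposition of Equation \ref{vertex decomposition} on both sides and reduce the claim to a single representation-theoretic identity localized at the contracted edge. Let $e \in E^{\circ}(\sigma)$ denote the contracted edge, with endpoints $v_1, v_2$, and let $v \in V(\sigma')$ be the merged vertex; note that $e$ must be an internal edge in order for $\sigma'$ to still have $n$ leaves. The remaining non-leaf vertices of $\sigma'$ are in natural bijection with $V(\sigma) \setminus \{v_1, v_2\}$, and for any such vertex $w$ the set of incident edges lies entirely in $E(\sigma')$ and agrees with the set of incident edges of $w$ in $\sigma$. Since any $\s$ appearing in the claimed sum satisfies $\s(e') = \s'(e')$ for every $e' \in E(\sigma')$, the $\SL_2$-invariant tensor factor contributed by $w$ to $W_\sigma(\s)$ matches the factor contributed by $w$ to $W_{\sigma'}(\s')$.

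Everything thus reduces to matching the single factor at $v$ against the combined factors at $v_1$ and $v_2$, summed over the free parameter $k = \s(e) \in \Z_{\geq 0}$. Let $e_1, \ldots, e_r$ (resp.\ $e_1', \ldots, e_s'$) be the non-$e$ edges at $v_1$ (resp.\ $v_2$) in $\sigma$; these are exactly the edges incident to $v$ in $\sigma'$. Setting $W = V(\s'(e_1)) \otimes \cdots \otimes V(\s'(e_r))$ and $W' = V(\s'(e_1')) \otimes \cdots \otimes V(\s'(e_s'))$, the identity I would need is
\begin{equation}
[W \otimes W']^{\SL_2} \;\cong\; \bigoplus_{k \geq 0} [V(k) \otimes W]^{\SL_2} \otimes [V(k) \otimes W']^{\SL_2},
\end{equation}
which is a standard consequence of Schur's lemma combined with the self-duality $V(k)^* \cong V(k)$ for $\SL_2$. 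Indeed, isotypical decomposition yields $[W \otimes W']^{\SL_2} = \bigoplus_k \mathrm{Hom}_{\SL_2}(V(k), W) \otimes \mathrm{Hom}_{\SL_2}(V(k), W')$, and self-duality identifies each factor $\mathrm{Hom}_{\SL_2}(V(k), -)$ with $[V(k) \otimes -]^{\SL_2}$.

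To finish, I would observe that Equation \ref{vertex decomposition} expresses both $W_{\sigma'}(\s')$ and $W_\sigma(\s)$ as tensor products over disjoint vertex sets, so the direct sum over $k = \s(e)$ commutes with the tensor product over the remaining vertices of $\sigma'$, and the local isomorphism at $v$ assembles into the claimed global decomposition. I do not expect a substantive obstacle: the only real content is the Schur-lemma identity at the contracted edge, and everything else is careful bookkeeping about how edges are redistributed between vertices under the contraction.
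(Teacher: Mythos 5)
Your proof is correct. Both you and the paper begin by applying the vertex decomposition (\ref{vertex decomposition}) and localizing the problem at the contracted edge, reducing everything to a single identity about tensor-product invariants at the merged vertex. The difference lies in how that local identity is established. The paper deduces it from the $\SL_2^k$-equivariant algebra isomorphism $\C[\SL_2^k]^{\SL_2} \cong \C[\SL_2^s \times \SL_2 \times \SL_2^{k-s}]^{\SL_2 \times \SL_2}$ of Proposition \ref{GITXspace}, tracking how isotypical components behave under plugging the $V(n)$ factor into its dual; you instead derive the identity $[W \otimes W']^{\SL_2} \cong \bigoplus_{k\ge 0} [V(k) \otimes W]^{\SL_2} \otimes [V(k) \otimes W']^{\SL_2}$ directly from the isotypical decomposition, Schur's lemma, and the $\SL_2$-self-duality $V(k)^* \cong V(k)$. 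Your route is more elementary and self-contained, not depending on the GIT-in-stages machinery, while the paper's route makes the compatibility with the geometric quotient construction explicit (which the authors want on record, since the lemma feeds into later geometric arguments about $X_\sigma$). Your remark that $e$ must be internal for $\sigma'$ to retain $n$ leaves is correct and is implicit in the paper's proof as well. The bookkeeping about how edges redistribute to the unaffected vertices is handled the same way in both arguments.
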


\begin{proof}
Let $v_1, v_2$ be the endpoints of $e$, and let $v\in V(\sigma')$ be the vertex created by bringing $v_1$ and $v_2$ together. 
We prove this lemma by considering the link of $v$ in $\sigma'$.  Everything we do is compatible with the geometric arguments given in Proposition \ref{GITXspace}. Let $e_1, \ldots, e_k$ be the edges of $\sigma'$ which contain $v$, and let $e_1, \ldots, e_s, e$ and $e, e_{s+1}, \ldots, e_k$ be the edges of $\sigma$ which contain $v_1$ and $v_2$, respectively. For the sake of simplicity, we orient all edges $e_i$ away from the vertices, and we have $e$ point from $v_1$ to $v_2$.   Pick $\a = (a_1, \ldots, a_k) \in \Z_{\geq 0}^k$ and $n \geq 0$, and consider the isotypical component of $ \C[\SL_2^s\times \SL_2 \times \SL_2^{k-s}]^{\SL_2 \times \SL_2}$:

\begin{equation}
\big[V(a_1) \otimes \cdots \otimes V(a_s)\otimes  V(n)\big]^{\SL_2} \otimes \big[V(n)\otimes V(a_{s+1})\otimes \cdots  \otimes V(a_k)\big]^{\SL_2} \otimes \big[V(a_1) \otimes \cdots \otimes V(a_k)\big].\\
\end{equation}

\noindent
The map $\SL_2^k \to \SL_2^s\times \SL_2 \times \SL_2^{k-s}$ which sends $(g_1, \ldots, g_k)$ to $(g_1, \ldots, g_s, Id, g_{s+1}, \ldots, g_k)$
induces the isomorphism of $\SL_2^k$-algebras $\C[\SL_2^k]^{\SL_2} \cong \C[\SL_2^s\times \SL_2 \times \SL_2^{k-s}]^{\SL_2 \times \SL_2}$ from Proposition \ref{GITXspace}.  This algebra map is computed on the above component by plugging the $V(n)$ component into its ``dual" $V(n)$. Since this map preserves the $\SL_2^k$ action, it must likewise map the $\a$ component of $\C[\SL_2^s\times \SL_2 \times \SL_2^{k-s}]^{\SL_2 \times \SL_2}$ isomorphically onto the $\a$-component of $\C[\SL_2^k]^{\SL_2}$, so we obtain  $\big[V(a_1)\otimes \cdots \otimes V(a_k)\big]^{\SL_2} \otimes V(a_1)\otimes \cdots \otimes V(a_k)$ as a direct sum over $n$ of the components above.  
\end{proof}


We make use of the decompositions of the Rees algebras $R $ and $S$ from Section \ref{sl2} to give a description of $\C[X_\sigma]$ in terms of the spaces $W_\sigma(\s)$.  By definition we have:

\begin{equation}
H^0(\overline{M}(\sigma), \L^{\otimes n}) = \bigotimes_{e \in E^{\circ}(\sigma)} H^0(\overline{SL}_2, \O(n)) \otimes \bigotimes_{\ell \in L(\sigma)} H^0(\P^2, \O(n)).
\end{equation}

\noindent
In particular, the same power $n$ is used in the computations of the global sections for each line bundle.  Recall that $H^0(\overline{\SL}_2, \O(n)) = \bigoplus_{0 \leq m \leq n} V(m) \otimes V(m) t^n$ and $H^0(\P^2, \O(n)) = \bigoplus_{0 \leq m \leq n} V(m)t^n$.  Since $t^n$ is a placeholder which agrees across all components of the tensor product, we obtain:

\begin{equation}
H^0(\overline{M}(\sigma), \L^{\otimes n}) =  \bigoplus_{\{\s \in \Z_{\geq 0}^{E(\sigma)} \ \mid \ \forall e\in E(\sigma), \ \s(e) \leq n\}} \big[ \bigotimes_{e \in E^{\circ}(\sigma)} V(\s(e))\otimes V(\s(e)) \otimes \bigotimes_{\ell \in L(\sigma)} V(\s(\ell))\big].
\end{equation}

As a consequence we obtain the following decomposition of the projective coordinate ring of $X_\sigma$:

\begin{equation}
\C[X_\sigma] = \C[\overline{M}(\sigma)]^{G(\sigma)} =  \bigoplus_{n\ge 0}\bigoplus_{\{\s \in \Z_{\geq 0}^{E(\sigma)} \ \mid \ \forall e\in E(\sigma),\ \s(e) \leq n\}} W_\sigma(\s)t^n.
\end{equation}

\begin{proposition}\label{compact}
The projective variety $X_\sigma$ is a compactification of $X$. 
\end{proposition}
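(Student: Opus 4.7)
The plan is to exhibit a canonical $G(\sigma)$-invariant global section $T$ of $\L$ on $\overline{M}(\sigma)$ whose non-vanishing locus equals $M(\sigma)$, and then to apply standard GIT to pass this picture to the quotient. Projectivity of $X_\sigma$ is automatic from its construction as a GIT quotient of the projective variety $\overline{M}(\sigma)$ by a reductive group with respect to the linearized line bundle $\L$, so the substantive task is to show that the open inclusion $M(\sigma) \hookrightarrow \overline{M}(\sigma)$ descends to an open dense immersion $X \hookrightarrow X_\sigma$.

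For each $e \in E^{\circ}(\sigma)$, Proposition \ref{sl2geometry} furnishes a section $t_e \in H^0(\overline{\SL}_2, \O(1))$ whose zero divisor is the $\SL_2 \times \SL_2$-stable boundary $D$. For each $\ell \in L(\sigma)$, the analogous discussion at the end of Section \ref{sl2} produces a section $t_\ell \in H^0(\P^2, \O(1))$ cutting out the $\SL_2 \times \mathbb{G}_m$-stable boundary $\P^1$. I would then form the outer tensor product $T = \boxtimes_{e \in E(\sigma)} t_e \in H^0(\overline{M}(\sigma), \L)$, whose zero locus is exactly the complement $\overline{M}(\sigma) \setminus M(\sigma)$. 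Since each $t_e$ spans a one-dimensional subrepresentation under the copies of $\SL_2$ associated to the endpoints of $e$, and $\SL_2$ has no non-trivial characters, each $t_e$ is fixed by those actions; consequently $T$ is $G(\sigma)$-invariant and descends to a non-zero element of $H^0(X_\sigma, \L)$, which I will again denote by $T$.

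The conclusion is then a two-step GIT argument. First, because $T$ is an invariant section that does not vanish on $M(\sigma)$, every point of $M(\sigma)$ is semistable, and the affine open $\{T \neq 0\} \subset X_\sigma$ is canonically identified with the affine GIT quotient $M(\sigma) \qr G(\sigma)$, which by Proposition \ref{GITXspace} is $X$. Second, $\overline{M}(\sigma)$ is irreducible as a product of irreducible varieties, and its semistable locus contains the non-empty open subset $M(\sigma)$, so $X_\sigma$ is irreducible and $X_\sigma \setminus X \subset \{T = 0\}$ is a proper closed subset, giving density. The one step needing genuine care is the $G(\sigma)$-invariance of $T$: verifying that each defining section of a boundary divisor is fixed, not merely semi-invariant, under both relevant copies of $\SL_2$. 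This is ultimately bookkeeping, resting on the character-freeness of $\SL_2$ and the already-recorded stability of the boundaries, rather than a real obstacle.
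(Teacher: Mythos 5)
Your argument is correct and is essentially the same as the paper's proof, merely translated into GIT-geometric language: your section $T$ is exactly the element $1t \in W_\sigma(\mathbf{0})t^1 \subset \C[X_\sigma]$ that the paper inverts, and your identification of $\{T \neq 0\}$ with the affine GIT quotient $M(\sigma)\qr G(\sigma) \cong X$ is what the paper carries out algebraically by computing $\big[\frac{1}{1t}\C[X_\sigma]\big]_0 \cong \C[X]$. The one place you mark as needing care, the $G(\sigma)$-invariance of $T$, is in fact immediate from the paper's isotypical description: each $t_e$ lies in the $V(0)\otimes V(0)t$ (resp.\ $V(0)t$) component, the trivial $\SL_2\times\SL_2$- (resp.\ $\SL_2\times\G_m$-) isotype, so it is fixed without appeal to the absence of nontrivial characters, though that reasoning is also valid.
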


\begin{proof}
We show that $X_\sigma = Proj(\C[X_\sigma])$ contains $X$ as a dense, open subset.   We consider the element $1t \in W_\sigma({\bf 0})t^1 \subset \C[X_\sigma]$, where ${\bf 0}: E(\sigma) \to \Z_{\geq 0}$ is the weight which assigns $0$ to every edge of $\sigma$.  As constructed, each graded component $\bigoplus_{\{\s \in \Z_{\geq 0}^{E(\sigma)} \ \mid \ \s(e) \leq n, \ \forall e \in E(\sigma)\}} W_\sigma(\s)t^n \subset \C[X_\sigma]$ is a subspace of $\C[X]$, and the multiplication operation on these graded components is computed by the multiplication rule in $\C[X]$; this is a consequence of the Proposition \ref{GITXspace} and the definition of $\C[X_\sigma]$. By inverting $1t$ we obtain $\frac{1}{1t^n}W_\sigma(\s)t^n = \frac{1}{1t^m}W_{\sigma}(\s)t^m$ for all $\s$ with $\s(e) \leq n, m$, $\forall e \in E(\sigma)$ in the $0$-degree part of $\frac{1}{1t}\C[X_\sigma]$.  It follows that $\big[\frac{1}{1t}\C[X_\sigma]\big]_0 \cong \C[X]$, and that the complement of the hypersurface $1t = 0$ in $X_\sigma$ is $Spec(\C[X]) = X$. 
\end{proof}

In the sequel, we let $D_\sigma \subset X_\sigma$ be the zero set of $1t \in \C[X_\sigma]$.

\section{The cone $C_\sigma$ of valuations on $\C[X]$}\label{coneofvaluations}

Our goal is to describe the geometry of the hypersurface $D_\sigma \subset X_\sigma$.  In order to construct its irreducible components and describe their intersections, we construct a cone $C_\sigma$ of discrete valuations on $\C[X]$. We show that $C_\sigma$ is simplicial and generated by distinguished valuations $v_e$, $e \in E(\sigma)$.  In Section \ref{divisorstructuresection} we show that $v_e$ is obtained by taking order of vanishing along a component of $D_\sigma$.

\subsection{Valuations on $\C[M(\sigma)]$}

We introduce a valuation $v_e: \C[X] \setminus \{0\} \to \Z$ for each edge $e \in E(\sigma)$.  First, we recall the valuations $v: \C[\SL_2] \setminus \{0\} \to \Z$ and $deg: \C[\A^2] \setminus\{0\} \to \Z$ from Section \ref{sl2}.  The space $M(\sigma)$ is the product $\prod_{e \in E^{\circ}(\sigma)} \SL_2 \times \prod_{\ell \in L(\sigma)} \A^2$. Accordingly, its coordinate ring carries a valuation $\bar{v}_e: \C[M(\sigma)]\setminus\{0\} \to \Z$ for each edge $e \in E(\sigma)$; this is computed by using $v$ when $e \in E^{\circ}(\sigma)$ and $deg$ when $e \in L(\sigma)$.  The associated algebraic filtration by the spaces $\bar{F}^e_m = \{f \in \C[M(\sigma)] \mid \bar{v}_e(f) \geq - m\}$ are given by the following spaces:

\begin{equation}
\bar{v}_e, e \in E^{\circ}(\sigma): \ \ \ \ \bar{F}^e_m = \big[ \bigotimes_{e' \in E^{\circ}(\sigma), \ e' \neq e}\C[\SL_2] \big] \otimes \big[ \bigoplus_{0 \leq n \leq m} V(n) \otimes V(n)\big] \otimes \big[ \bigotimes_{\ell \in L(\sigma)} \C[\A^2]\big],\\
\end{equation}

\begin{equation}
\bar{v}_\ell, \ell \in L(\sigma): \ \ \ \ \bar{F}^\ell_m = \big[ \bigotimes_{e \in E^{\circ}(\sigma)}\C[\SL_2] \big] \otimes \big[\bigoplus_{0 \leq n \leq m} V(n) \big]\otimes \big[ \bigotimes_{\ell' \in L(\sigma), \ \ell' \neq \ell} \C[\A^2]\big].\\
\end{equation}

\noindent
We also have the following strict filtration spaces:

\begin{equation}
\bar{v}_e, e \in E^{\circ}(\sigma): \ \ \ \ \bar{F}^e_{<m} = \big[ \bigotimes_{e' \in E^{\circ}(\sigma), \ e' \neq e}\C[\SL_2] \big] \otimes \big[ \bigoplus_{0 \leq n < m} V(n) \otimes V(n)\big] \otimes \big[ \bigotimes_{\ell \in L(\sigma)} \C[\A^2]\big],\\
\end{equation}

\begin{equation}
\bar{v}_\ell, \ell \in L(\sigma): \ \ \ \ \bar{F}^\ell_{<m} = \big[ \bigotimes_{e \in E^{\circ}(\sigma)}\C[\SL_2] \big] \otimes \big[\bigoplus_{0 \leq n < m} V(n) \big]\otimes \big[ \bigotimes_{\ell' \in L(\sigma), \ \ell' \neq \ell} \C[\A^2]\big]. \\
\end{equation}

\noindent
Clearly $\bar{F}^e_{< m} \subset \bar{F}^e_m$ for any $e \in E(\sigma)$. 
The reader can verify that the associated graded algebra of $\bar{v}_e, e \in E^{\circ}(\sigma)$ and $\bar{v}_{\ell}, \ell \in L(\sigma)$ are the coordinate rings of $\prod_{e' \in E^{\circ}(\sigma), \ e' \neq e} \SL_2 \times \SL_2^c \times \prod_{\ell \in L(\sigma)} \A^2$ and $\prod_{e \in E^{\circ}(\sigma)} \SL_2 \times \A^2 \times \prod_{\ell' \in L(\sigma), \ \ell' \neq \ell} \A^2$, respectively.  We observe that, for any $r \in \R_{\geq 0}$, the function $rv: \C[M(\sigma)]\setminus \{0\}\to \R$ is also a valuation. Furthermore, since $deg$ is obtained from a $\Z$-grading on $\C[\A^2]$, for any $r \in \R$, $rdeg: \C[\A^2] \setminus\{0\} \to \R$ is a valuation.  

\begin{definition}
Let $\r \in \R_{\geq 0}^{E^{\circ}(\sigma)}\times \R^{L(\sigma)}$, and let $\bar{v}_\r: \C[M(\sigma)] \setminus \{0\} \to \R$ be the valuation $\sum \r(e)\bar{v}_e$ obtained using the sum operation described in Definition \ref{valsum}. 
\end{definition}

The valuations $\bar{v}_\r$ are built from the valuations $v$ and $deg$, which can be computed entirely in terms of the representation theory of $\SL_2$.  The following lemma shows that this is also the case for $\bar{v}_\r$.

\begin{lemma}\label{upstairsevaluation}
Let $\s \in \Z_{\geq 0}^{E(\sigma)}$ and $f \in \big[ \bigotimes_{e \in E^{\circ}(\sigma)} V(\s(e))\otimes V(\s(e)) \otimes \bigotimes_{\ell \in L(\sigma)} V(\s(\ell))\big] \subset \C[M(\sigma)]$, then $\bar{v}_\r(f)$ is computed by taking the ``dot product" of $\r$ and $\s$ over the edges of $\sigma$:

\begin{equation}
\bar{v}_\r(f) = \sum_{e \in E(\sigma)} -\r(e)\s(e) = -\langle \r, \s \rangle.\\
\end{equation}

\noindent
Furthermore, the filtration space $\bar{F}^\r_m = \{ f \in \C[M(\sigma)] \mid \bar{v}_\r(f) \geq - m\}$ is the following sum:

\begin{equation}
\bar{F}^\r_m = \bigoplus_{\{\s \mid -\langle \r, \s \rangle \geq -m\}} \big[ \bigotimes_{e \in E^{\circ}(\sigma)} V(\s(e))\otimes V(\s(e)) \otimes \bigotimes_{\ell \in L(\sigma)} V(\s(\ell))\big].
\end{equation} 
\end{lemma}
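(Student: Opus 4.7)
The plan is to reduce to the single-factor case and then assemble using the sum-of-valuations machinery of Section \ref{valuations}.

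First I would unpack each individual valuation. For $e \in E^{\circ}(\sigma)$ the valuation $\bar{v}_e$ is the pull-back (via the projection onto the $e$-th tensor factor) of $v \colon \C[\SL_2] \setminus \{0\} \to \Z$, which by construction takes the constant value $-n$ on any nonzero element of the isotypical summand $V(n) \otimes V(n) \subset \C[\SL_2]$.  Similarly, for $\ell \in L(\sigma)$ the valuation $\bar{v}_\ell$ pulls back $deg$, which takes value $-n$ on $V(n) \subset \C[\A^2]$.  Scaling by $\r(e) \geq 0$ preserves the filtration in the $\SL_2$ case, while for leaf edges the $\Z$-grading on $\C[\A^2]$ means that any real $\r(\ell)$ is admissible; thus each $\r(e)\bar{v}_e$ is still a valuation.

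Next I would assemble these into $\bar{v}_\r$ by iterating Lemma \ref{tensorsum} and Definition \ref{valsum}.  Choose once and for all a weight basis of each irreducible $\SL_2$-representation $V(n)$; tensoring these over edges produces a vector space basis $\mathcal{B} \subset \C[M(\sigma)]$ that is simultaneously adapted to every $\bar{v}_e$, because each such valuation is constant on its own isotypical component and ignores the remaining tensor factors.  For any basis element $b \in \mathcal{B}$ lying inside the summand indexed by $\s$, the relation $\bar{v}_e(b) = -\s(e)$ holds by construction, and summing edgewise gives
$$\bar{v}_\r(b) \;=\; \sum_{e \in E(\sigma)} \r(e)\bar{v}_e(b) \;=\; -\sum_{e \in E(\sigma)} \r(e)\s(e) \;=\; -\langle \r, \s \rangle.$$
Since every basis vector in the $\s$-summand has the same value, Proposition \ref{adaptedbasisproperties}(1) forces $\bar{v}_\r(f) = -\langle \r, \s \rangle$ for every nonzero $f$ in that summand, which is the first claim.

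For the filtration-space description I would again invoke Proposition \ref{adaptedbasisproperties}(1): because $\mathcal{B}$ is adapted to $\bar{v}_\r$, the value of $\bar{v}_\r$ on an arbitrary element is the minimum of the $\bar{v}_\r$-values of the basis summands appearing in it.  Consequently $f \in \bar{F}^\r_m$ if and only if every isotypical component appearing in the expansion of $f$ satisfies $-\langle \r, \s \rangle \geq -m$, which is the claimed direct sum decomposition.  I do not anticipate a serious obstacle; the only points requiring care are checking that Definition \ref{valsum} and Lemma \ref{tensorsum} apply with real (rather than integer) coefficients — both extend verbatim once an adapted basis is in hand — and that the sign convention built into $v$ and $deg$ is exactly what permits mixing non-negative scalars on interior edges with arbitrary real scalars on leaf edges.
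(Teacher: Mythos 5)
Your proposal is correct and follows essentially the same route as the paper: the paper's proof is a one-line citation of the explicit formulas for $v$ and $deg$ and Definition \ref{valsum}, and you have simply filled in the implicit steps (choosing a weight basis refining the isotypical decomposition, observing that each $\bar{v}_e$ is constant on each $\s$-summand and ignores the other tensor factors, summing edgewise via Definition \ref{valsum}, and invoking Proposition \ref{adaptedbasisproperties}(1) to pass from basis vectors to arbitrary elements and to read off the filtration spaces). Your parenthetical caution about extending Definition \ref{valsum} and Lemma \ref{tensorsum} from $\Z^r$ to $\R$, and about why $\r(e)\ge 0$ is needed on interior edges while $\r(\ell)\in\R$ is admissible on leaf edges, is exactly the care the paper silently assumes.
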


\begin{proof}
This is a direct consequence of the formula for computing the valuations $v: \C[\SL_2]\setminus\{0\} \to \Z$ and $deg: \C[\A^2]\setminus\{0\} \to \Z$, and Definition \ref{valsum}. 
\end{proof}

\subsection{Valuations on $\C[X]$}\label{valuationsonX}

In what follows we place a partial ordering $\preceq$ on $\Z_{\geq 0}^{E(\sigma)}$, where $\s \preceq \s'$ if $\s(e) \leq \s'(e), \ \forall e \in E(\sigma)$.  We let $v_\r: \C[X] \setminus \{0\} \to \R$ be the restriction of $\bar{v}_\r$ from $\C[M(\sigma)]$ to $\C[X]$.  

\begin{proposition}\label{multrule}
The following hold for a tree $\sigma$, the associated decomposition $\C[X] = \bigoplus_{\s \in \Z_{\geq 0}^E(\sigma)} W_\sigma(\s)$, and the valuation $v_\r$ for any $\r \in \R_{\geq 0}^{E^{\circ}(\sigma)}\times \R^{L(\sigma)}$:\\

\begin{enumerate}
\item for $f \in W_\sigma(\s)$,  we have $v_\r(f) = -\langle \r, \s \rangle$,\\
\item for $m \in \R$, the filtration space $F^\r_m = \{ f \in \C[X] \mid v_\r(f) \geq - m\} = \bigoplus_{\{\s \mid -\langle \r, \s \rangle \geq -m\}} W_\sigma(\s)$,\\
\item for $f = \sum f_\s$ with $f_\s \in W_\sigma(\s)$, we have $v_\r(f) = MIN\{ -\langle \r, \s \rangle \mid f_\s \neq 0\}$,\\
\item for any $\s, \s' \in \Z_{\geq 0}^{E(\sigma)}$, we have $W_\sigma(\s)W_\sigma(\s') \subset \bigoplus_{\s'' \preceq \ \s + \s'} W_\sigma(\s'')$. Furthermore, the $\s + \s'$ component of this product is always nonzero.\\
\end{enumerate}

\end{proposition}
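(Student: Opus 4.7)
The first three parts follow almost immediately from Lemma \ref{upstairsevaluation} together with the fact that $W_\sigma(\s)$ sits as the $G(\sigma)$-invariant subspace of the $\s$-isotypical tensor component in $\C[M(\sigma)]$, and that $v_\r$ is the restriction of $\bar{v}_\r$. For (1), I would apply the lemma directly to any nonzero $f \in W_\sigma(\s)$. For (2), I would combine (1) with the decomposition $\C[X] = \bigoplus_\s W_\sigma(\s)$: an element $f = \sum f_\s$ lies in $F^\r_m$ if and only if every nonzero $f_\s$ satisfies $-\langle \r, \s\rangle \geq -m$, since the $W_\sigma(\s)$ are disjoint summands. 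Part (3) then falls out of (2) by reading off $v_\r(f)$ as the largest $-m$ with $f \in F^\r_m$.

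For (4), the containment follows by combining the filtered multiplication on $\C[\SL_2]$ from Equation \ref{lowermult} on the non-leaf edges with the exact grading of $\C[\A^2]$ on the leaf edges. Multiplication between the $\s$- and $\s'$-isotypical tensor components in $\C[M(\sigma)]$ lands in $\bigoplus_{\s'' \preceq \s + \s'}$ of the tensor components (with equality on the leaves), and taking $G(\sigma)$-invariants yields the desired containment $W_\sigma(\s) \cdot W_\sigma(\s') \subset \bigoplus_{\s'' \preceq \s + \s'} W_\sigma(\s'')$.

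The nonvanishing of the $\s + \s'$ component is the main obstacle, and for it I would pass to an associated graded algebra. Consider the $\Z_{\geq 0}^{E(\sigma)}$-valued filtration on $\C[M(\sigma)]$ obtained by intersecting the filtrations $\bar{F}^e$ for all $e \in E(\sigma)$. Its associated graded is
\[
\bigotimes_{e \in E^{\circ}(\sigma)} \C[\SL_2^c] \otimes \bigotimes_{\ell \in L(\sigma)} \C[\A^2],
\]
a tensor product of domains (since $\SL_2^c$ is the irreducible hypersurface cut out by $\det = 0$ in the space of $2 \times 2$ matrices) and hence itself a domain. Taking invariants under the reductive group $G(\sigma)$ preserves the domain property, so $\bigoplus_\s W_\sigma(\s)$ equipped with the associated graded multiplication is a domain. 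In this associated graded algebra, component-wise Cartan multiplication sends $W_\sigma(\s) \times W_\sigma(\s')$ exactly into $W_\sigma(\s + \s')$, and this is precisely the projection of the usual product in $\C[X]$ onto the $W_\sigma(\s + \s')$ summand. Since the associated graded is a domain, for any nonzero $f \in W_\sigma(\s)$ and $g \in W_\sigma(\s')$ the projection of $fg$ onto $W_\sigma(\s + \s')$ is nonzero, which establishes the claim.
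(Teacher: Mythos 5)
Your proof is correct and, for parts (1)--(3), follows the same path as the paper: restrict Lemma \ref{upstairsevaluation} to $\C[X]$ by taking $G(\sigma)$-invariants, observe that the filtration spaces $F^\r_m$ respect the decomposition $\bigoplus_\s W_\sigma(\s)$, and read off (3) from adaptedness. For the nonvanishing claim in (4), your implementation differs slightly from the paper's. You compute the associated graded of $\C[M(\sigma)]$ under the multi-filtration explicitly as $\bigotimes_{e} \C[\SL_2^c] \otimes \bigotimes_{\ell} \C[\A^2]$, a tensor product of domains, and then pass to $G(\sigma)$-invariants. The paper instead introduces the single rank-one valuation $v_\sigma = \sum_{e\in E(\sigma)} v_e$ on $\C[X]$ and notes that since $v_\sigma$ is a valuation, its associated graded $\gr_\sigma(\C[X])$ is a domain; the key observation (which you also make) is that the associated graded product of $W_\sigma(\s)$ and $W_\sigma(\s')$ is exactly the projection onto $W_\sigma(\s+\s')$. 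Both routes rest on the same core fact --- that the Cartan-multiplication algebra $\C[\SL_2^c]$ is a domain --- and are roughly equally long; yours is slightly more self-contained since it computes the upstairs associated graded directly rather than invoking the earlier valuation machinery (Lemma \ref{tensorsum}), while the paper's choice of a rank-one $v_\sigma$ avoids having to specify a total ordering on $\Z^{E(\sigma)}$ to make the multi-graded associated graded precise, a point you elide.
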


\begin{proof}
The valuations $\bar{v}_\r$ are all $G(\sigma)$-invariant and,  in particular,  their filtration spaces $\bar{F}^\r_m$ are $G(\sigma)$-representations.  We have $W_\sigma(\s) = \big[ \bigotimes_{e \in E^{\circ}(\sigma)} V(\s(e))\otimes V(\s(e)) \otimes \bigotimes_{\ell \in L(\sigma)} V(\s(\ell))\big]^{G(\sigma)}$, so $(1)$ is a consequence of Lemma  \ref{upstairsevaluation}.  Furthermore, to prove $(2)$ we can compute $F^\r_m = \bar{F}^\r_m \cap \C[X]$ $= \big[\bar{F}^\r_m\big]^{G(\sigma)}$ $= \bigoplus_{\{\s \mid \langle \r, \s \rangle \geq -m\}} W_\sigma(\s)$ by Lemma \ref{upstairsevaluation}.  Part $(2)$ shows that $v_\r$ is adapted to the direct sum decomposition $\C[X] = \bigoplus_{\s \in \Z_{\geq 0}^{E(\sigma)}} W_\sigma(\s)$ (recall this notion from Section \ref{valuations}), so part $(3)$ follows as a consequence. 

We know that $W_\sigma(\s)W_\sigma(\s') \subset \bigoplus_{\s'' \preceq \ \s + \s'} W_\sigma(\s'')$ from properties of multiplication in $\C[\SL_2]$ and $\C[\A^2]$.  For the second part of $(4)$, we first observe that $F^\r_{<m} = \bigoplus_{\{\s \mid \langle \r, \s \rangle > -m\}} W_\sigma(\s)$.  Let $v_\sigma$ be the valuation obtained from $\bar{v}_\sigma = \sum_{e \in E(\sigma)} \bar{v}_e$. Then, for any $f \in W_\sigma(\s'')$ with $\s'' \preceq \s + \s'$ we must have $v_\sigma(f) \geq \sum_{e \in E(\sigma)} \s(e) + \s'(e)$, with equality if and only if $\s'' = \s + \s'$.  Now $(2)$ implies that the product of the components $W_\sigma(\s)$ and $W_\sigma(\s')$ in the associated graded algebra $gr_\sigma(\C[X])$ of $v_\sigma$ is projection onto the $W_\sigma(\s + \s')$ component.  Since $v_\sigma$ is a valuation, $gr_\sigma(\C[X])$ is a domain, so this product must be nonzero. 
\end{proof}

Recall the Berkovich analytification $X^{an}$ of the affine variety $X$.  Proposition \ref{multrule} allows us to construct a distinguished subset of $X^{an}$ associated to the tree $\sigma$. 

\begin{corollary}
There is a continuous map $\phi_\sigma: \R_{\geq 0}^{E^{\circ}(\sigma)}\times \R^{L(\sigma)} \to X^{an}$ which takes $\r$ to $v_\r$.  
\end{corollary}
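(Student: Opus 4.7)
The corollary asserts three things: $v_\r$ is an actual valuation (not merely a quasi-valuation), $v_\r$ restricts trivially to $\C$, and the map $\r \mapsto v_\r$ is continuous into the Berkovich analytification $X^{an}$ equipped with its pointwise-convergence topology. My strategy for the first (and main) claim is to verify multiplicativity of $v_\r$ by lifting to the ambient algebra $\C[M(\sigma)] = \bigotimes_{e \in E^{\circ}(\sigma)} \C[\SL_2] \otimes \bigotimes_{\ell \in L(\sigma)} \C[\A^2]$, where each $\bar{v}_e$ touches only a single tensor factor, then restrict back down.

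On the non-leaf factor for $e \in E^{\circ}(\sigma)$, Lemma \ref{ReesFactsSL2}(2) identifies $gr_v(\C[\SL_2])$ with $\C[\SL_2^c]$, an integral domain, and scaling by $\r(e) \geq 0$ preserves both the valuation property and the integral associated graded. On a leaf factor, $\deg$ arises from the $\Z$-grading on $\C[\A^2]$, so its associated graded equals $\C[\A^2]$ itself; for arbitrary $\r(\ell) \in \R$, the quasi-valuation $\r(\ell) \bar{v}_\ell$ produced by the min-over-adapted-basis formula of Definition \ref{valsum} remains a valuation, selecting the top- or bottom-degree homogeneous component of $f$ according to the sign of $\r(\ell)$. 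Iterating Lemma \ref{tensorsum}, the associated graded of $\bar{v}_\r$ factors as a tensor product $\bigotimes_e gr_{\r(e) \bar{v}_e}(A_e)$ of integral $\C$-algebras, and so is itself an integral domain. Therefore $\bar{v}_\r$ is a valuation on $\C[M(\sigma)]$, and its restriction $v_\r$ to the subring $\C[X] \subset \C[M(\sigma)]$ is a valuation as well, since multiplicativity and the ultrametric inequality are inherited by subrings.

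Triviality on $\C$ follows from Proposition \ref{multrule}(1): constants lie in $W_\sigma(\mathbf{0})$, so $v_\r(c) = -\langle \r, \mathbf{0}\rangle = 0$ for $c \in \C^*$, and rank one is immediate because $v_\r$ takes values in $\R$. For continuity, the Berkovich topology on $X^{an}$ is by construction the coarsest topology making each evaluation $v \mapsto v(f)$, $f \in \C[X]$, continuous; and by Proposition \ref{multrule}(3), $v_\r(f) = \min\{-\langle \r, \s\rangle : f_\s \neq 0\}$ is a pointwise minimum of finitely many $\R$-linear functions of $\r$, hence continuous in $\r$. The most delicate step in the whole argument is the handling of negative $\r(\ell)$ on leaf edges, where ordinary scalar multiplication of $\deg$ would fail to give a valuation; the min-over-basis formulation of Definition \ref{valsum} sidesteps this by yielding a scaled lowest-degree valuation instead of an ill-defined negated top-degree function, and this is what makes the tensor-product-of-domains argument go through uniformly over the full domain $\R_{\geq 0}^{E^{\circ}(\sigma)} \times \R^{L(\sigma)}$ of $\phi_\sigma$.
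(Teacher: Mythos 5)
Your proof is correct, and the continuity argument is identical to the paper's: both reduce to Proposition \ref{multrule}(3), which expresses $ev_f(v_\r)$ as a finite minimum of linear functions of $\r$. The bulk of your write-up is devoted to verifying that $v_\r$ is in fact a valuation, which the paper's proof of the corollary treats in one sentence by deferring to Proposition \ref{multrule} and the construction in Section \ref{coneofvaluations}. Your route to this fact --- lift to $\C[M(\sigma)]$, use Lemma \ref{tensorsum} to factor $gr_{\bar{v}_\r}$ as a tensor product of domains over $\C$, conclude $\bar{v}_\r$ is a valuation, then restrict --- is a cleaner and more self-contained version of what the paper does implicitly (the paper's Proposition \ref{multrule}(4) uses the auxiliary valuation $v_\sigma = \sum_e \bar{v}_e$ and its associated graded being a domain, which ultimately rests on the same tensor-product fact). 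Your observation that for $\r(\ell) < 0$ the adapted-basis definition of $\r(\ell)\,\bar{v}_\ell$ yields the bottom-degree valuation rather than a naive (and non-valuative) scalar multiple of $-\max$-degree is a correct and genuinely useful clarification of a point the paper asserts in passing.
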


\begin{proof} 
In Proposition \ref{multrule}, we have shown that there is such a map $\phi_{\sigma}$. Thus, it remains to establish that this map is continuous.  Using the definition of the topology on $X^{an}$, it suffices to show that any evaluation function $ev_f$, $f \in \C[X]$, pulls back to a continuous function on $\R_{\geq 0}^{E^{\circ}(\sigma)}\times \R^{L(\sigma)}$.  By part $(3)$ of Proposition \ref{multrule}, we have $ev_f(v_\r) = MIN\{ -\langle \r, \s \rangle \mid f_\s \neq 0\}$, where $f_\s$ denotes the $W_\sigma(\s)$ component of $f$; this function is piecewise-linear in $\r$ and therefore continuous.
\end{proof}

Suppose that a tree $\sigma'$ is obtained from $\sigma$ by contracting a non-leaf edge $e \in E^{\circ}(\sigma)$.  There is a natural inclusion, $i_e: \R_{\geq 0}^{E^{\circ}(\sigma')}\times \R^{L(\sigma')} \to \R_{\geq 0}^{E^{\circ}(\sigma)}\times \R^{L(\sigma)}$,  by regarding $\R_{\geq 0}^{E^{\circ}(\sigma')}\times \R^{L(\sigma')}$ as the weightings of $\sigma$ which are $0$ on $e$.  

\begin{lemma}\label{faceinclude}
For $\r \in \R_{\geq 0}^{E^{\circ}(\sigma')}\times \R^{L(\sigma')}$, $v_\r = v_{i_e(\r)}$. As a consequence, $\R_{\geq 0}^{E^{\circ}(\sigma')}\times \R^{L(\sigma')}$ can be regarded as a face of $\R_{\geq 0}^{E^{\circ}(\sigma)}\times \R^{L(\sigma)}$. 
\end{lemma}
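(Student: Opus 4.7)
The plan is to show the equality of valuations by reducing the computation of $v_{i_e(\r)}$, which is \emph{a priori} adapted to the finer decomposition $\C[X] = \bigoplus_{\s \in \Z_{\geq 0}^{E(\sigma)}} W_\sigma(\s)$, to a computation on the coarser decomposition $\C[X] = \bigoplus_{\s' \in \Z_{\geq 0}^{E(\sigma')}} W_{\sigma'}(\s')$ indexed by the contracted tree. The combinatorial input is Lemma \ref{furtherdecompose}, which supplies
\[
W_{\sigma'}(\s') = \bigoplus_{\{\s \in \Z_{\geq 0}^{E(\sigma)}\, \mid\, \s|_{E(\sigma')} = \s'\}} W_\sigma(\s).
\]

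The first step is the key numerical identity. For any $\s \in \Z_{\geq 0}^{E(\sigma)}$ restricting to $\s'$ on $E(\sigma')$, the weight $i_e(\r)$ has $i_e(\r)(e) = 0$, so
\[
-\langle i_e(\r), \s\rangle = -\sum_{e'' \in E(\sigma)} i_e(\r)(e'')\,\s(e'') = -\sum_{e'' \in E(\sigma')} \r(e'')\,\s'(e'') = -\langle \r, \s'\rangle.
\]
By part (1) of Proposition \ref{multrule}, the valuation $v_{i_e(\r)}$ therefore takes the \emph{same} value $-\langle \r, \s'\rangle$ on every summand $W_\sigma(\s)$ appearing in the decomposition of $W_{\sigma'}(\s')$ above. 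Hence, by part (3) of the same proposition applied within $W_{\sigma'}(\s')$, we have $v_{i_e(\r)}(h) = -\langle \r, \s'\rangle$ for every nonzero $h \in W_{\sigma'}(\s')$.

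Now for a general $f \in \C[X] \setminus \{0\}$, write $f = \sum_{\s'} f_{\s'}$ with $f_{\s'} \in W_{\sigma'}(\s')$ and further decompose each $f_{\s'} = \sum_{\s|_{E(\sigma')} = \s'} f_\s$ with $f_\s \in W_\sigma(\s)$. Applying part (3) of Proposition \ref{multrule} with respect to the $\sigma$-decomposition gives
\[
v_{i_e(\r)}(f) = \mathrm{MIN}\{-\langle i_e(\r), \s\rangle \mid f_\s \neq 0\} = \mathrm{MIN}\{-\langle \r, \s'\rangle \mid f_{\s'} \neq 0\} = v_\r(f),
\]
where the middle equality collapses the $\s$-minimum to an $\s'$-minimum since the value only depends on $\s'$, and the final equality is the definition of $v_\r$ via the $\sigma'$-decomposition.

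For the face claim, $i_e$ is by definition the inclusion sending $\r \in \R_{\geq 0}^{E^{\circ}(\sigma')}\times \R^{L(\sigma')}$ to the unique element of $\R_{\geq 0}^{E^{\circ}(\sigma)}\times \R^{L(\sigma)}$ whose $e$-coordinate vanishes and which agrees with $\r$ on all other edges. Since $e \in E^{\circ}(\sigma)$, the hyperplane locus $\{\r : \r(e) = 0\}$ is a genuine face of the polyhedral cone $\R_{\geq 0}^{E^{\circ}(\sigma)}\times \R^{L(\sigma)}$, and the identification of valuations just established means that $\phi_\sigma \circ i_e = \phi_{\sigma'}$, so the domain of $\phi_{\sigma'}$ is naturally this face. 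The argument is essentially book-keeping given Lemma \ref{furtherdecompose}; the only point requiring care is recognizing that $v_{i_e(\r)}$ is automatically constant on each $W_{\sigma'}(\s')$ because its values on the $W_\sigma(\s)$-summands depend on $\s$ only through $\s|_{E(\sigma')}$.
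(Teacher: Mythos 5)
Your proof is correct and fills in exactly the ``direct computation using Lemma~\ref{furtherdecompose} and Proposition~\ref{multrule}'' that the paper leaves as a one-liner: the key observation, that $\langle i_e(\r), \s\rangle$ depends only on $\s|_{E(\sigma')}$ and hence $v_{i_e(\r)}$ is constant on each $W_{\sigma'}(\s')$, is precisely what makes the two valuations agree once one applies parts (1) and (3) of Proposition~\ref{multrule} to both decompositions. This matches the paper's intended argument; no gaps.
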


\begin{proof}
This follows by direct computation using Lemma \ref{furtherdecompose} and Proposition \ref{multrule}. 
\end{proof}

\begin{definition}\label{treespace}
Let $\mathcal{T}(n)$ denote the complex $\bigcup_{\sigma} \R_{\geq 0}^{E^{\circ}(\sigma)} \times \R^{L(\sigma)}$ obtained as the push-out of the diagram of inclusions defined by the maps $i_e$.
\end{definition}

The maps $\phi_\sigma$ glue together to define a continuous map $\Phi: \mathcal{T}(n) \to X^{an}$.  Let $C_\sigma$ denote the image, $\phi_\sigma(\R_{\geq 0}^{E^{\circ}(\sigma)}\times \R^{L(\sigma)})$.  In Section \ref{tropical} we show that the evaluation functions $ev_{p_{ij}}\circ \Phi: \mathcal{T}(n) \to \R$ map a tree to its \emph{dissimilarity vector} in $\R^{\binom{n}{2}}$; as a consequence, $\Phi$ must be an injective map.

\section{The geometry of $D_\sigma \subset X_\sigma$}\label{divisorstructuresection}

With the valuations $v_\r$ and the decomposition $\C[X] = \bigoplus_{\s \in \Z_{\geq 0}^{E(\sigma)}} W_\sigma(\s)$, we have two useful tools for understanding the geometry of the compactification $X_\sigma$.   In this section we show that $D_\sigma$ is reduced, and we give a recipe to decompose $D_\sigma$ into irreducible components.

\subsection{The ideal $I_S \subset \C[X_\sigma]$}

Much of our understanding of the divisor $D_\sigma$ is derived from the decomposition of the projective coordinate ring of $X_\sigma$ into the spaces $W_\sigma(\s)$:

\begin{equation}
\C[X_\sigma] = \bigoplus_{n\ge 0}\bigoplus_{\{\s \in \Z_{\geq 0}^{E(\sigma)} \mid \ \forall e \in E(\sigma),\ \s(e) \leq n\}} W_\sigma(\s) t^n.\\
\end{equation}

\noindent
This decomposition enables us to define a set of distinguished ideals in $\C[X_\sigma]$. 

\begin{definition}
For $S \subset E(\sigma)$,  a subset of the edges of $\sigma$,  let $I_S \subset \C[X_\sigma]$ be the following vector space:\\

\begin{equation}
I_S = \bigoplus_{n\ge 0}\bigoplus_{\{\s \in \Z_{\geq 0}^{E(\sigma)} \mid \ \forall e \in E(\sigma),\ \s(e) \leq n,\  \exists e' \in S, \ \s(e') < n\}} W_\sigma(\s) t^n.\\
\end{equation}

\end{definition}

\begin{proposition}\label{ideal}
For any $S \subset E(\sigma)$, $I_S \subset \C[X_\sigma]$ is a prime ideal.
\end{proposition}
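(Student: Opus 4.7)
The plan is to identify $\C[X_\sigma]/I_S$ with the section ring of an integral projective subvariety of $\overline{M}(\sigma)$ obtained by restricting each edge in $S$ to its boundary divisor; once this identification is made, primality follows because the section ring of an integral projective variety is a domain, and a subring of a domain is a domain.

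First, verify that $I_S$ is closed under multiplication by $\C[X_\sigma]$: if $f \in W_\sigma(\s)t^n$ has $\s(e') < n$ for some $e' \in S$ and $g \in W_\sigma(\s')t^m$, then by Proposition \ref{multrule}(4) the product $fg$ decomposes into components in $W_\sigma(\s'')t^{n+m}$ with $\s'' \preceq \s + \s'$, and each satisfies $\s''(e') \leq \s(e') + \s'(e') < n + m$, placing $fg$ in $I_S$.

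Next, for each edge $e \in S$, replace the corresponding factor of $\overline{M}(\sigma)$ by its boundary divisor: replace $\overline{\SL}_2$ by $D \cong \P^1 \times \P^1$ (Proposition \ref{sl2geometry}(3)) on a non-leaf edge, and $\P^2$ by its line at infinity $\P^1$ on a leaf-edge. Let $\overline{M}_S(\sigma) \subset \overline{M}(\sigma)$ denote the resulting closed subvariety. It is $G(\sigma)$-stable, since all boundaries in question are stable under the $\SL_2 \times \SL_2$ and $\SL_2$ actions described in Section \ref{sl2}, and it is a product of smooth irreducible projective varieties, hence integral. By Proposition \ref{sl2geometry}(5) and its $\P^2$ analog, on global sections the restriction to each boundary factor is the projection
\begin{equation}
H^0(\overline{\SL}_2, \O(n)) = \bigoplus_{k \leq n} V(k) \otimes V(k) \twoheadrightarrow V(n) \otimes V(n) = H^0(D, \O(n)|_D),
\end{equation}
with kernel $\bigoplus_{k < n} V(k) \otimes V(k)$, and analogously $H^0(\P^2, \O(n)) \twoheadrightarrow V(n)$ with kernel $\bigoplus_{k < n} V(k)$.

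Combining by the K\"unneth formula, the restriction $\C[\overline{M}(\sigma)] \twoheadrightarrow \C[\overline{M}_S(\sigma)]$ is surjective on each graded piece, and its kernel is precisely the span of the isotypical components indexed by $\s$ with $\s(e) < n$ for some $e \in S$. Taking $G(\sigma)$-invariants, which is exact because $G(\sigma)$ is a product of reductive groups, gives a surjection $\C[X_\sigma] \twoheadrightarrow \C[\overline{M}_S(\sigma)]^{G(\sigma)}$ whose kernel is exactly $I_S$. Therefore $\C[X_\sigma]/I_S \cong \C[\overline{M}_S(\sigma)]^{G(\sigma)}$. Since $\overline{M}_S(\sigma)$ is integral projective with the restriction of $\L$ having nonzero sections in each positive degree, its section ring is a domain, and the invariant subring inherits this property, so $I_S$ is prime.

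The main obstacle is matching the kernel of restriction with $I_S$ at the level of $G(\sigma)$-invariants. This requires the factor-by-factor computation of boundary restriction from Section \ref{sl2} together with K\"unneth and exactness of reductive invariants; once that bookkeeping is completed, primality is immediate.
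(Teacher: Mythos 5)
Your proof is correct, but it takes a genuinely different route from the paper. The paper proves Proposition \ref{ideal} with a short algebraic argument that exploits the multiplicativity of the edge valuations: viewing a homogeneous element $f$ of degree $n$ as a regular function on $X$, membership of $f$ in $I_S$ is detected by the valuation $v_S := \sum_{e \in S} v_e$ (equivalently the $\r$-weight valuation for $\r$ the indicator of $S$), and since $v_S$ is a genuine valuation on $\C[X]$ one concludes directly that $I_S$ is prime. Your argument instead identifies $\C[X_\sigma]/I_S$ with the $G(\sigma)$-invariant section ring $\bigoplus_m H^0(\overline{M}(\sigma,S), \L_S^{\otimes m})^{G(\sigma)}$ of the integral projective variety $\overline{M}(\sigma,S)$, via the K\"unneth computation of the restriction kernel and exactness of reductive invariants, and then concludes primality from the fact that the section ring of an integral variety is a domain. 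This is precisely the content of the paper's later Proposition \ref{GITexpression}, which the paper derives \emph{after} and independently of Proposition \ref{ideal}; you are effectively proving that identification first and deducing primality as a corollary. What your approach buys: it is geometrically transparent, it sidesteps the valuation-theoretic bookkeeping entirely (in particular you never need to reduce to a single valuation $v_S$ as the paper's terse phrasing ``$v_e(fg) < n+m$ for some $e$'' implicitly requires), and it simultaneously establishes the GIT description of $D_S$. What the paper's approach buys: it is shorter, stays at the level of commutative algebra, and does not require the Rees-algebra/boundary-divisor formalism of Section \ref{sl2} to already be in place for the primality statement itself, making Proposition \ref{ideal} logically independent from Proposition \ref{GITexpression}.
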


\begin{proof}
We recall the multiplication rule, $W_\sigma(\s)W_\sigma(\s') \subset \bigoplus_{\s'' \preceq \ \s + \s'} W_\sigma(\s'')$, from Proposition \ref{multrule}, and immediately deduce that $I_S$ is a homogeneous ideal.  Now suppose $fg \in I_S$ for $f, g \in \C[X_\sigma]$, homogeneous elements of degrees $n$ and $m$, respectively. We view $f$ and $g$ as regular functions on $X$ with the property that $v_e(f) \leq n$  and $v_e(g) \leq m$, $\forall e \in E(\sigma)$.  If $fg \in I_S$, it must be the case that $v_e(fg) = v_e(f) + v_e(g) < n + m$ for some $e \in E(\sigma)$. But, this can only happen if $v_e(f) < n$ or $v_e(g) < m$, so we conclude that $f \in I_S$ or $g \in I_S$.  
\end{proof}

We let $D_S \subset X_\sigma$ be the zero locus of $I_S$.  Clearly we have that $S \subset S'$ implies that $I_S \subset I_{S'}$ and $D_S \supset D_{S'}$.  The following proposition shows that $D_\sigma$ is built from the irreducible, reduced subvarieties $D_S$. 

\begin{proposition}\label{divisorstructure}
For any tree $\sigma$ the following hold:\\

\begin{enumerate}
\item $I_{S \cup S'} = I_S + I_{S'}$, $D_S \cap D_{S'} = D_{S \cup S'}$,\\
\item if $S \neq S'$ then $I_S \neq I_{S'}$.
\end{enumerate}

\end{proposition}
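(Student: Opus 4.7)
The plan is to deduce (1) directly from the combinatorics of the direct sum decomposition defining $I_S$, and to prove (2) by exhibiting a single graded summand $W_\sigma(\s)t^n$ that lies in $I_S$ but not in $I_{S'}$ (or vice versa), using the Pieri rule \ref{pieri} to guarantee $W_\sigma(\s)\neq 0$.

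For part (1), I would simply observe that the condition cutting out the summands of $I_{S\cup S'}$, namely ``there exists $e'\in S\cup S'$ with $\s(e')<n$'', is logically the disjunction of the conditions cutting out $I_S$ and $I_{S'}$. Hence as graded subspaces of $\C[X_\sigma]$ we have $I_{S\cup S'}=I_S+I_{S'}$, and this is automatically an equality of ideals by Proposition \ref{ideal}. The scheme-theoretic equality $D_S\cap D_{S'}=D_{S\cup S'}$ then follows from the standard identity $V(I)\cap V(J)=V(I+J)$.

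For part (2), suppose $S\neq S'$ and, without loss of generality, pick an edge $e\in S\setminus S'$. My goal is to produce a $\sigma$-weight $\s$ with $\s(e)<n$ and $\s(e')=n$ for every $e'\in S'$, and then to show that $W_\sigma(\s)t^n\subseteq I_S$ while $W_\sigma(\s)t^n$ is not among the direct summands of $I_{S'}$; this separates the two ideals. The explicit choice I would make is $n=2$ together with $\s(e)=0$ and $\s(e'')=2$ for every other edge $e''\neq e$. Then $\s(e'')\leq 2$ for all $e''$, so $W_\sigma(\s)t^2$ is one of the graded summands of $\C[X_\sigma]$; the equality $\s(e)=0<2$ puts this summand into $I_S$ via the witness $e$; and for every $e'\in S'$ we have $\s(e')=2=n$, so no edge of $S'$ witnesses inclusion in $I_{S'}$, forcing $W_\sigma(\s)t^2$ to lie outside the direct sum defining $I_{S'}$.

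The one genuine obstacle is verifying that the weight $\s$ I chose actually has $W_\sigma(\s)\neq 0$, since the invariant space could a priori collapse. By the vertex-wise decomposition \ref{vertex decomposition} combined with Pieri's formula \ref{pieri}, non-vanishing at each trinode $v$ reduces to checking the parity condition and the triangle inequalities on the three incident values $\s(e_1(v)),\s(e_2(v)),\s(e_3(v))$. For trinodes not adjacent to $e$ the triple is $(2,2,2)$, which gives sum $6$ (even) and satisfies $|2-2|\leq 2\leq 2+2$; for the (at most two) trinodes adjacent to $e$ the triple is $(2,2,0)$ up to order, giving sum $4$ (even) and satisfying $|2-2|\leq 0\leq 2+2$. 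Hence $W_\sigma(\s)\neq 0$ at every vertex and the construction succeeds. A symmetric argument handles the case $e\in S'\setminus S$, so $I_S\neq I_{S'}$ whenever $S\neq S'$.
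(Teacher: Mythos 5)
Part (1) and the overall strategy of part (2) match the paper's argument: exhibit a graded summand $W_\sigma(\s)t^n$ lying in one ideal but not the other, and reduce the question to non-vanishing of the invariant space. Your separating weighting (a single $0$ at a chosen $e \in S\setminus S'$, value $2$ elsewhere, with $n=2$) differs from the paper's choice (value $4$ on $S$, value $2$ off $S$, with $n=4$), but both accomplish the separation, and your explicit WLOG handling of the symmetric case $e\in S'\setminus S$ is cleaner than what the paper writes.

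However, there is a genuine gap in your verification that $W_\sigma(\s) \neq 0$. The proposition is stated for \emph{any} tree $\sigma$, but you check non-vanishing only at trinodes. For a vertex $v$ of valence $k > 3$, the vertex-wise factor in (\ref{vertex decomposition}) is $\bigl[V(\s(e_1(v)))\otimes\cdots\otimes V(\s(e_k(v)))\bigr]^{\SL_2}$, and the Pieri rule (\ref{pieri}) as stated only computes the $k=3$ case. The paper addresses this with Lemma \ref{invariantspacenot0}, an induction on $k$; the weighting $\{2,4\}$ is chosen precisely so the inductive step (replacing $V(n_1)\otimes V(n_2)$ by some $V(m)$ with $m$ again in $\{2,4\}$) is uniform. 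Your weighting $\{0,2\}$ with a single $0$ does still work: a vertex of valence $k\geq 3$ adjacent to $e$ contributes $[V(0)\otimes V(2)^{\otimes(k-1)}]^{\SL_2} = [V(2)^{\otimes(k-1)}]^{\SL_2}$ and every other internal vertex contributes $[V(2)^{\otimes k}]^{\SL_2}$, both of which are nonzero because $V(2)\otimes V(2)\cong V(0)\oplus V(2)\oplus V(4)$ and a straightforward induction shows $[V(2)^{\otimes m}]^{\SL_2}\neq 0$ for all $m\geq 2$. You should add this (or an equivalent) argument to cover non-trivalent $\sigma$; as written the proof only establishes the proposition for trivalent trees.
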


\begin{proof}
All of the ideals $I_S$ are sums of the spaces $W_\sigma(\s)t^n$, so for both $(1)$ and $(2)$ it suffices to check membership on these spaces. We have $W_\sigma(\s)t^n \subset I_{S \cup S'}$ if and only if there is some $e \in S \cup S'$ with $\s(e) < n$; this happens if and only if $W_\sigma(\s)t^n$ is either in $I_S$ or $I_{S'}$.  To show that $I_S, I_{S'}$ are distinct prime ideals we only need to show that there is some element which is in $I_S$, but not in $I_{S'}$.  We define a weighting $\omega_S$ of $E(\sigma)$ by non-negative integers so that $0 \neq W_\sigma(\omega_S)t^4 \subset \C[X_\sigma]$, $\omega_S(e) = 4$, $\forall e \in S$ and $\omega_S(e) = 2$, $\forall e \notin S$.  Clearly $W_\sigma(\omega_S)t^4 \subset I_{S'}$ but $W_\sigma(\omega_S)t^4 \not\subset I_S$.   This reduces the question to showing that $W_\sigma(\omega_S) \neq 0$, which is handled by the following lemma. 
\end{proof}

For the proof of the following lemma see \ref{vertex decomposition}.

\begin{lemma}\label{invariantspacenot0}
For any subset $S \subset E(\sigma)$, the invariant space $W_\sigma(\omega_S) \subset \C[X]$ is nonzero. 
\end{lemma}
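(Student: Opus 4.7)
The plan is to use the vertex decomposition (Equation \ref{vertex decomposition}) to reduce the problem to a local condition at each non-leaf vertex, and then verify that condition directly via the $\SL_2$ Pieri rule. Specifically, by
\[ W_\sigma(\omega_S) \;=\; \bigotimes_{v \in V(\sigma)} \bigl[ V(\omega_S(e_1(v))) \otimes \cdots \otimes V(\omega_S(e_k(v))) \bigr]^{\SL_2}, \]
showing $W_\sigma(\omega_S) \neq 0$ is equivalent to showing each tensor factor — the $\SL_2$-invariants at a single non-leaf vertex — is nonzero. Since $\sigma$ is trivalent (this is the setting of the main theorem and of Proposition \ref{divisorstructure}), every non-leaf vertex has exactly three incident edges, so I can apply Equation \ref{pieri} directly: I must check the \emph{parity condition} and the \emph{triangle inequalities} for the triple $(\omega_S(e_1(v)), \omega_S(e_2(v)), \omega_S(e_3(v)))$.

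The parity condition is immediate and uniform across all vertices: each value of $\omega_S$ lies in $\{2, 4\}$, both even, so the sum of the three incident weights is even at every non-leaf vertex. For the triangle inequalities, the key observation is that the values $2$ and $4$ appearing in the definition of $\omega_S$ are chosen precisely so the inequalities are satisfied at every trivalent vertex: the largest entry of any triple from $\{2, 4\}^3$ is at most $4$, while the sum of the remaining two entries is at least $2 + 2 = 4$, so the condition $\max \leq \text{sum of the other two}$ holds (tightly in the case $(2, 2, 4)$, and with room to spare otherwise). Hence, by Equation \ref{pieri}, the one-dimensional invariant space at each $v$ is nonzero, and so its tensor product $W_\sigma(\omega_S)$ is nonzero.

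There is no real obstacle in this argument — it is essentially a verification that the two candidate values have been chosen compatibly with the trivalent Pieri conditions. The conceptual point worth emphasizing in the writeup is precisely \emph{why} the weighting $\omega_S$ in the proof of Proposition \ref{divisorstructure} uses the pair $\{2,4\}$ rather than $\{0, 1\}$ or $\{1, 2\}$: the value $2$ is the smallest even integer large enough that the triangle inequality at a trivalent vertex is never violated, while $4$ is the smallest strictly larger even integer, which is what produces ideals $I_S$ distinguishable by whether $\omega_S(e) < 4$ on a given edge $e$. Thus the lemma is really a compatibility check between the trivalent Pieri conditions and the minimal choice of weights needed to separate the prime ideals $I_S$ in the outer proof.
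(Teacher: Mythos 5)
Your reduction via the vertex decomposition and your verification of the Pieri conditions for triples from $\{2,4\}$ are both correct, and the observation about \emph{why} $\{2,4\}$ is the minimal workable choice is a nice addition. However, there is a genuine scope gap: you assert that the trivalent case is ``the setting of Proposition~\ref{divisorstructure},'' but that proposition is stated \emph{for any tree $\sigma$}, and the lemma is invoked there in that generality. (Only the Corollary afterwards restricts to trivalent $\sigma$.) Non-trivalent trees appear naturally in this paper — they index the lower-dimensional faces of the complex $\mathcal{T}(n)$ (Lemma~\ref{faceinclude}, Definition~\ref{treespace}), and contracting an edge in a trivalent tree produces a vertex of valence $4$. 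For such a vertex the factor in Equation~\ref{vertex decomposition} is $\bigl[V(n_1)\otimes\cdots\otimes V(n_k)\bigr]^{\SL_2}$ with $k>3$, and the three-term Pieri rule no longer applies directly.

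The paper closes this gap by induction on $k\ge 3$: given $n_1,\dots,n_k\in\{2,4\}$, it decomposes $V(n_1)\otimes V(n_2)=\bigoplus V(m)$ and observes that for any choice of $n_1,n_2\in\{2,4\}$ one can select $m\in\{2,4\}$ (this is exactly your $k=3$ check), then applies the inductive hypothesis to $V(m)\otimes V(n_3)\otimes\cdots\otimes V(n_k)$. Your argument is precisely the base case of that induction; to make it a complete proof of the lemma as stated you would need to add the inductive step for $k>3$.
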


\begin{proof}
It suffices to show that any tensor product $V(n_1)\otimes \cdots \otimes V(n_k)$ where $n_i \in \{2, 4\}$ and $k \geq 3$
contains an invariant.  If $k = 3$, the Pieri rule (Equation \ref{pieri}) shows that this is the case.  Suppose that this holds
up to $k-1$. The tensor product decomposition $V(n_1) \otimes V(n_2) = \bigoplus V(m)$ induces a decomposition of the $k-$fold tensor product:

\begin{equation}
V(n_1) \otimes \cdots \otimes V(n_k) = \bigoplus V(m) \otimes V(n_3) \otimes \cdots \otimes V(n_k).\\
\end{equation}

\noindent
Here, the sum is over all $m$ so that $n_1, n_2, m$ satisfy the Pieri rule. For whatever combination of $2$ and $4$ are given by $n_1, n_2$, we know we can have $m$ be $2$ or $4$ as necessary from the case $k = 3$.  But then $V(m) \otimes \cdots \otimes V(n_k)$ contains an invariant by the induction hypothesis, so $V(n_1) \otimes \cdots \otimes V(n_k)$ does as well. 
\end{proof}

\begin{corollary}
If $\sigma$ is a trivalent tree, then the $D_S$ are the intersections of the irreducible components of the reduced divisor $D_\sigma$. In particular, $D_\sigma$ is of combinatorial normal crossings type. 
\end{corollary}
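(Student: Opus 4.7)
The strategy is to identify $D_\sigma$ as a reduced divisor whose irreducible components are exactly the $D_{\{e\}}$, with the $D_S$ realized as their intersections, and then to use trivalence to upgrade this to a combinatorial normal crossings structure by a codimension count. The heart of the argument is the ideal-theoretic identity
\[
(1t) \;=\; \bigcap_{e \in E(\sigma)} I_{\{e\}}
\]
in $\C[X_\sigma]$, from which the whole statement will follow.

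To prove this identity I work inside the bigraded decomposition $\C[X_\sigma] = \bigoplus_{n,\s} W_\sigma(\s)t^n$. The containment $\subseteq$ is immediate because $1t \in W_\sigma(\mathbf{0})t^1$ and $\mathbf{0}(e) = 0 < 1$ for every edge $e$, which places $1t$ in each $I_{\{e\}}$. For $\supseteq$, suppose $W_\sigma(\s)t^n$ lies in every $I_{\{e\}}$; then $\s(e) < n$ for all $e \in E(\sigma)$, so $W_\sigma(\s)t^{n-1}$ is itself a graded piece of $\C[X_\sigma]$, and multiplication by $1t$ sends it isomorphically onto $W_\sigma(\s)t^n$, placing the latter inside $(1t)$. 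Decomposing a general element of $\bigcap_e I_{\{e\}}$ into its isotypical components finishes the inclusion. Since each $I_{\{e\}}$ is prime (Proposition \ref{ideal}) and the ideals are pairwise distinct (Proposition \ref{divisorstructure}(2)), $(1t)$ is an intersection of distinct prime ideals, so $D_\sigma$ is reduced with irreducible components precisely the $D_{\{e\}}$. Combining this with $I_S = \sum_{e \in S} I_{\{e\}}$ from Proposition \ref{divisorstructure}(1) yields $D_S = \bigcap_{e \in S} D_{\{e\}}$, which is the first assertion of the corollary.

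For the combinatorial normal crossings claim in the trivalent case I need $\mathrm{codim}(D_S) = |S|$ for every $S \subseteq E(\sigma)$. Trivalence supplies the numerical input $|E(\sigma)| = 2n-3 = \dim X_\sigma$, so the critical point is that the deepest stratum $D_{E(\sigma)}$ has codimension $2n-3$, i.e.\ is a nonempty finite set of points. Granting this, the elementary inequality $\mathrm{codim}(D_{S \cup \{e\}}) \le \mathrm{codim}(D_S) + 1$ together with $\mathrm{codim}(D_{E(\sigma)}) = |E(\sigma)|$ forces $\mathrm{codim}(D_S) = |S|$ throughout the face lattice of $2^{E(\sigma)}$, after which the intersection lattice of the $\{D_{\{e\}}\}$ is the boolean lattice on $E(\sigma)$, giving the combinatorial normal crossings structure. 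I expect the main obstacle to be pinning down the deepest stratum: the natural route is to identify the associated graded of $\C[X]$ obtained by simultaneously degenerating along all the $v_e$ with the affine semigroup algebra $\C[S_\sigma]$ from Theorem \ref{main}, using that the combinatorial structure of $W_\sigma(\s)$ via the Pieri rule (Lemma \ref{invariantspacenot0}) controls the dimension of the resulting toric stratum. Trivalence is essential here because only then does the cone $C_\sigma$ have full dimension $|E(\sigma)| = \dim X$, matching the dimension of the ambient variety.
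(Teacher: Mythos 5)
Your proof of the ideal-theoretic identity $(1t) = \bigcap_{e} I_{\{e\}}$ is correct and is actually spelled out more carefully than the paper, which merely asserts the identity at the end of its proof. From this you correctly deduce the intersection structure $D_S = \bigcap_{e\in S} D_{\{e\}}$ via Proposition~\ref{divisorstructure}$(1)$. The gap is in the codimension count. You single out $\mathrm{codim}(D_{E(\sigma)}) = 2n-3$ as the ``critical point'' and then defer its proof to an uncarried-out toric-degeneration argument, and you lean on an ``elementary inequality'' $\mathrm{codim}(D_{S\cup\{e\}}) \le \mathrm{codim}(D_S) + 1$ that is not in fact elementary as stated: it is the intersection-dimension bound applied to $D_S \cap D_e$, which requires knowing $\mathrm{codim}(D_e)=1$ (a special case of what you are trying to prove) together with some regularity hypothesis on $X_\sigma$ such as Cohen--Macaulayness.

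The missing idea, and what the paper actually does, is the chain-of-primes argument. Fix a total order on $E(\sigma)$ and form $S_1 \subsetneq \cdots \subsetneq S_{2n-3}=E(\sigma)$. By Proposition~\ref{divisorstructure}$(2)$ the corresponding ideals $I_{S_1} \subsetneq \cdots \subsetneq I_{S_{2n-3}}$ are a strictly increasing chain of primes, so $\mathrm{codim}(D_{S_i}) \ge i$; and since $D_{E(\sigma)}$ is nonempty (which follows from Lemma~\ref{invariantspacenot0}), $\mathrm{codim}(D_{E(\sigma)}) \le \dim X_\sigma = 2n-3$, which propagates back down the chain to force $\mathrm{codim}(D_{S_i}) = i$ for every~$i$. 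This handles all the codimensions at once, including the one you flagged, without any intersection-theory input and without the toric-degeneration detour. An alternative route closer in spirit to yours is available: once you have $(1t)=\bigcap_e I_{\{e\}}$ and the pairwise incomparability of the $I_{\{e\}}$, Krull's Hauptidealsatz applied to the principal ideal $(1t)$ immediately gives $\mathrm{height}(I_{\{e\}})=1$, after which Cohen--Macaulayness of $\C[X_\sigma]$ (which holds, e.g.\ by Hochster--Roberts, and is established in a stronger form in Proposition~\ref{Fano}) yields the upper bound on $\mathrm{codim}(D_S)$; but neither this nor the chain argument appears in your write-up, so as submitted the codimension count is incomplete.
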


\begin{proof}
If $\sigma$ is trivalent, then $|E(\sigma)| = dim(X_\sigma) = 2n-3$. Picking any ordering on the elements $e_i \in E(\sigma)$ we can form an increasing chain of distinct subsets $S_i = \{e_1, \ldots, e_i\}$.  From Proposition \ref{divisorstructure}, we know that the corresponding ideals $I_{S_i}$ form an increasing chain of distinct prime ideals.  It follows that $height(I_S) = codim(D_S) = |S|$.  In particular, $codim(D_e) = 1$ for any $e \in E(\sigma)$, and $D_S = \cap_{e \in S} D_e$.    Finally we observe that $\langle 1t \rangle \subset \C[X_\sigma]$ is $\cap_{e \in E(\sigma)} I_e$; it follows that $D_\sigma = \cup_{e \in E(\sigma)} D_e$. 
\end{proof}

\begin{remark}
It is possible to show that $D_\sigma$ is a combinatorial normal crossings divisor when $\sigma$ is not trivalent, but we omit the proof here for the sake of simplicity. 
\end{remark}

\subsection{$D_S$ as a GIT quotient}\label{DSgeom}

Now we describe each reduced, irreducible subvariety $D_S \subset X_\sigma$ as a GIT quotient in the style of Section \ref{GIT}.  We select a direction on each $e \in E(\sigma)$ and define the following product space associated to $S \subset E(\sigma)$:

\begin{equation}
\overline{M}(\sigma, S) = \prod_{e \in E^{\circ}(\sigma) \setminus S} \overline{\SL}_2 \times \prod_{e \in E^{\circ}(\sigma) \cap S} (\P^1 \times \P^1) \times \prod_{\ell \in L(\sigma) \setminus S} \P^2 \times \prod_{\ell \in L(\sigma) \cap S} \P^1.\\
\end{equation}

\noindent
The space $\overline{M}(\sigma, S)$ is naturally a closed, reduced, irreducible subspace of $\overline{M}(\sigma)$.  We let $\L_S$ be the restriction of the line bundle $\L$ to this subspace.  Following Section \ref{sl2}, we have:

\begin{equation}
\L_S = \big[\boxtimes_{e \in E^{\circ}(\sigma) \setminus S} \O(1)\big] \boxtimes \big[\boxtimes_{e \in E^{\circ}(\sigma) \cap S}\O(1)\boxtimes\O(1) \big] \boxtimes \big[\boxtimes_{\ell \in L(\sigma) \setminus S}\O(1)\big] \boxtimes \big[\boxtimes_{\ell \in L(\sigma) \cap S} \O(1)\big].\\
\end{equation}

\noindent
where:

\begin{equation}
H^0(\overline{\SL}_2, \O(m)) = F_m = \bigoplus_{0 \leq n \leq m} V(n) \otimes V(n),\\
\end{equation}

\begin{equation}
H^0(\P^1\times\P^1, \O(m)\boxtimes\O(m)) = V(m)\otimes V(m),\\
\end{equation}

\begin{equation}
H^0(\P^2, \O(m)) = \bigoplus_{0 \leq n \leq m} V(n),\\
\end{equation}

\begin{equation}
H^0(\P^1, \O(m)) = V(m),\\
\end{equation}

\noindent
as $\SL_2$-representations. 

\begin{proposition}\label{GITexpression}
The space $D_S$ is isomorphic to the GIT quotient $\overline{M}(\sigma, S)\qr_{\L_S} G(\sigma)$.  In particular, there is an isomorphism of graded algebras:

\begin{equation}
\bigoplus_{m \geq 0} H^0(\overline{M}(\sigma, S), \L_S^{\otimes m})^{G(\sigma)} \cong \C[X_{\sigma}]/I_S.\\
\end{equation}
\end{proposition}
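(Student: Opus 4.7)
The plan is to realize $\overline{M}(\sigma, S)$ as a closed $G(\sigma)$-stable subvariety of $\overline{M}(\sigma)$ obtained by intersecting boundary divisors on the factors indexed by $S$, and then match the invariant graded ring of its sections with $\C[X_\sigma]/I_S$ by exploiting the reductivity of $G(\sigma)$.

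First I would identify $\overline{M}(\sigma, S)$ with an intersection of boundary hypersurfaces inside $\overline{M}(\sigma)$. By Proposition \ref{sl2geometry}(3), the boundary divisor of $\overline{\SL}_2$ is $\SL_2 \times \SL_2$-stable and isomorphic to $\P^1 \times \P^1$, and by the analogous statement for $\P^2$ the boundary is the $\SL_2 \times \G_m$-stable copy of $\P^1$. Since $G(\sigma)$ acts factor-by-factor on $\overline{M}(\sigma)$, replacing each factor indexed by $e \in S$ with its boundary divisor yields a closed $G(\sigma)$-stable subvariety equal to the transverse intersection of the corresponding boundary hypersurfaces of $\overline{M}(\sigma)$. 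Proposition \ref{sl2geometry}(5) and its $\P^2$ analog compute the restriction of $\O(1)$ on each such factor to its boundary, showing that $\L|_{\overline{M}(\sigma, S)} = \L_S$ as $G(\sigma)$-linearized line bundles.

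Next I would compute the kernel of the restriction map on sections. By the K\"unneth formula, $H^0(\overline{M}(\sigma), \L^{\otimes m}) \to H^0(\overline{M}(\sigma, S), \L_S^{\otimes m})$ is the tensor product of the restriction maps on each factor. For a factor indexed by $e \in S \cap E^{\circ}(\sigma)$, $\SL_2 \times \SL_2$-equivariance forces the restriction $\bigoplus_{0 \leq n \leq m} V(n) \otimes V(n) \to V(m) \otimes V(m)$ to be projection onto the top isotypical component, with kernel $\bigoplus_{0 \leq n < m} V(n) \otimes V(n)$; the corresponding statement for $\ell \in S \cap L(\sigma)$ yields kernel $\bigoplus_{0 \leq n < m} V(n)$. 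Assembling tensor factors shows that the restriction on $H^0$ is surjective with kernel $J_S$ equal to the direct sum of those tensor components in which some $e' \in S$ satisfies $\s(e') < m$.

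Since $G(\sigma)$ is a product of copies of $\SL_2$ and hence reductive, the invariant functor is exact. Applying it to the short exact sequence of restriction produces
\begin{equation}
0 \to J_S^{G(\sigma)} \to \C[X_\sigma] \to \bigoplus_{m \geq 0} H^0(\overline{M}(\sigma, S), \L_S^{\otimes m})^{G(\sigma)} \to 0,
\end{equation}
and direct comparison with the defining decomposition of $I_S$ gives $J_S^{G(\sigma)} = I_S$. This yields the graded algebra isomorphism in the statement. Finally, because $\overline{M}(\sigma, S)$ is projective and $G(\sigma)$-stable with linearized line bundle $\L_S$, the GIT quotient $\overline{M}(\sigma, S) \qr_{\L_S} G(\sigma)$ is $\Proj$ of the invariant section ring, which by the above is $\Proj(\C[X_\sigma]/I_S) = D_S$. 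The main obstacle is the identification of the kernel $J_S$ of the restriction map together with its invariants; once the isotypical bookkeeping is done, the remaining steps reduce to standard GIT and the reductivity of $G(\sigma)$.
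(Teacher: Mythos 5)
Your proposal is correct and takes essentially the same route as the paper: form the short exact sequence of graded section rings induced by the closed immersion $\overline{M}(\sigma,S) \hookrightarrow \overline{M}(\sigma)$, identify the kernel as the sum of isotypical components with $\s(e') < m$ for some $e' \in S$, and then apply the exactness of the $G(\sigma)$-invariant functor to obtain $0 \to I_S \to \C[X_\sigma] \to \bigoplus_m H^0(\overline{M}(\sigma,S),\L_S^{\otimes m})^{G(\sigma)} \to 0$. The only genuine additions are the explicit appeal to the K\"unneth formula and the equivariance/Schur-lemma argument pinning down the kernel factor-by-factor; these are details the paper leaves implicit rather than a different method.
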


\begin{proof}
Using the descriptions of the components of the graded coordinate rings of $\overline{M}(\sigma, S)$ and $\overline{M}(\sigma)$ above and in Section \ref{sl2}, we can form the following exact sequence. 

\begin{equation}\label{exactsequence}
0 \to \bigoplus_{m \geq 0} J_m \to \bigoplus_{m \geq 0} H^0(\overline{M}(\sigma), \L^{\otimes m}) \to \bigoplus_{m \geq 0} H^0(\overline{M}(\sigma, S), \L_S^{\otimes m}) \to 0,\\
\end{equation}

\noindent
where $J_m$ is the direct sum of components of the form 

\begin{equation}
\big[\bigotimes_{e \in E^{\circ}(\sigma) \setminus S} V(\s(e)) \otimes V(\s(e))\big]\otimes \big[\bigotimes_{e \in E^{\circ}(\sigma) \cap S} V(\s(e))\otimes V(\s(e)) \big] \otimes \big[\bigotimes_{\ell \in L(\sigma) \setminus S} V(\s(\ell))\big] \otimes \big[\bigotimes_{\ell \in L(\sigma) \cap S} V(\s(\ell))\big],\\
\end{equation}

\noindent
for $\s \in \Z_{\geq 0}^{E(\sigma)}$ with $\s(e) \leq m$ for all $e \in E(\sigma)$ and $\s(e) < m$ (or $\s(\ell) < m$) for some $e \in S$.  Taking $G(\sigma)$-invariants of (\ref{exactsequence}) produces the exact sequence:

\begin{equation}
0 \to I_S \to \C[X_\sigma] \to \bigoplus_{m \geq 0} H^0(\overline{M}(\sigma, S), \L_S^{\otimes m})^{G(\sigma)} \to 0,\\
\end{equation}

\noindent
which proves the proposition. 
\end{proof}

\begin{remark}
Proposition \ref{GITexpression} gives an interesting interpretation of the compactification $D_\sigma \subset X_\sigma \supset X$, where ``going to infinity" in the direction of an edge $e \in E(\sigma)$ has one passing from $\SL_2 \subset \overline{\SL}_2$ to $\P^1 \times \P^1$.  This latter space is a $\G_m$-quotient of the singular $2\times2$ matrices $\SL_2^c$.  So in the boundary $D_\sigma$ we see a variant of a quiver variety made from singular matrices, as opposed to $X\subset X_\sigma$ which is made with elements of $\SL_2$.
\end{remark}

\subsection{The valuations $ord_{D_e}$}

Now we relate the divisorial valuations associated to the irreducible components of $D_\sigma$ to the cone of valuations $C_\sigma$ constructed in Section \ref{coneofvaluations}.  In Proposition \ref{sl2geometry},  we see that taking order along the boundary divisor $D = \P^1\times \P^1 \subset \overline{\SL}_2$ produces the valuation $v: \C[\SL_2] \setminus \{0\} \to \Z$.  An identical statement holds for the boundary copy of $\P^1$ in $\P^2 \supset \A^2$ and the valuation $deg: \C[\A^2] \setminus \{0\} \to \Z$.  

We have a distinguished irreducible divisor $\overline{M}(\sigma, e) \subset \overline{M}(\sigma)$ for each edge $e \in E(\sigma)$ coming from the construction in \ref{DSgeom}.  As a product space, $\overline{M}(\sigma, e)$ is obtained by replacing the copy of $\overline{\SL}_2$ or $\P^2$ at the edge $e$ with its boundary divisor.   Since $M(\sigma)$ is a dense, open subspace of $\overline{M}(\sigma)$, both the coordinate ring $\C[M(\sigma)]$ and its ring of $G(\sigma)$-invariants $\C[X]$ inherit the valuation $ord_{\overline{M}(\sigma, e)}$. The divisor $D_e$ is then obtained from $\overline{M}(\sigma, e)$ as the GIT quotient.  The following proposition relates the valuations obtained from these divisors.

\begin{proposition}\label{valscoincide}
For any $e \in E(\sigma)$, the valuations $v_e, ord_{\overline{M}(\sigma, e)}$, and $ord_{D_e}$ coincide on $\C[X]$. 
\end{proposition}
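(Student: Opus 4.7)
The plan is to identify $v_e$, $ord_{\overline{M}(\sigma,e)}$, and $ord_{D_e}$ on $\C[X]$ in two steps. First, $v_e$ is by construction the restriction to $\C[X] = \C[M(\sigma)]^{G(\sigma)}$ of the valuation $\bar{v}_e$ on $\C[M(\sigma)]$, and I claim $\bar{v}_e = ord_{\overline{M}(\sigma,e)}$ on $\C[M(\sigma)]$. Indeed, $\overline{M}(\sigma)$ is a product of $\overline{\SL}_2$ and $\P^2$ factors, and $\overline{M}(\sigma,e)$ replaces the factor at $e$ by its boundary divisor, so $ord_{\overline{M}(\sigma,e)}$ is the pullback via projection of the boundary valuation on the single factor at $e$. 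By Proposition \ref{sl2geometry}(6) (for $e \in E^{\circ}(\sigma)$) and the analogous statement for $\P^2 \supset \A^2$ at the end of Section \ref{sl2} (for $e \in L(\sigma)$), this boundary valuation is exactly $v$ on $\C[\SL_2]$ or $deg$ on $\C[\A^2]$, which are the defining building blocks of $\bar{v}_e$. Restricting to the invariant subring yields $v_e = ord_{\overline{M}(\sigma,e)}|_{\C[X]}$.

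The substantive step is $ord_{\overline{M}(\sigma,e)}|_{\C[X]} = ord_{D_e}|_{\C[X]}$. Let $\pi\colon \overline{M}(\sigma)^{ss}\to X_\sigma$ be the GIT quotient map. By Proposition \ref{GITexpression}, $D_e = \overline{M}(\sigma,e)\qr G(\sigma)$, so $\overline{M}(\sigma,e)\cap\overline{M}(\sigma)^{ss}$ dominates $D_e$ under $\pi$. For an invariant $f \in \C(X) = \C(X_\sigma)$ one has $ord_{\overline{M}(\sigma,e)}(f) = e_{D_e}\cdot ord_{D_e}(f)$, where $e_{D_e}$ is the ramification index of $\pi$ along $\overline{M}(\sigma,e)$. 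I will show $e_{D_e} = 1$. The variety $\overline{M}(\sigma)$ is smooth (each $\overline{\SL}_2$ is a smooth quadric in $\P^4$ by Proposition \ref{sl2geometry}(2), and $\P^2$ is smooth), and the reductive group $G(\sigma)$ acts with trivial generic stabilizer: in the trivalent case a dimension count gives $\dim \overline{M}(\sigma)-\dim G(\sigma) = (5n-9)-3(n-2) = 2n-3 = \dim X_\sigma$, and generic freeness of the action of $\prod_v \SL_2$ on a product of $\SL_2$'s and $\A^2$'s follows from its explicit form. Luna's \'etale slice theorem then gives an \'etale-local identification of $\pi$ with projection $G(\sigma) \times S \to S$ near a generic stable point of $\overline{M}(\sigma,e)$, under which $\overline{M}(\sigma,e)$ corresponds to $G(\sigma) \times (\overline{M}(\sigma,e)\cap S)$ and $D_e$ to $\overline{M}(\sigma,e)\cap S$ inside $S$. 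A uniformizer of $D_e$ then pulls back to a uniformizer of $\overline{M}(\sigma,e)$, forcing $e_{D_e} = 1$.

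\textbf{Main obstacle.} The crux is the identification $e_{D_e}=1$, which rests on generic freeness of the $G(\sigma)$-action along $\overline{M}(\sigma,e)$. An alternative bypassing Luna's theorem is to define $\mu_e$ on a homogeneous $h = \sum_\s h_\s \in \bigoplus_\s W_\sigma(\s)t^n \subset \C[X_\sigma]_n$ by $\mu_e(h) = \min\{n - \s(e) \mid h_\s \neq 0\}$, verify that this defines a genuine valuation on $\C(X_\sigma)$ (using Proposition \ref{multrule}(4) to handle products), observe that its center is $D_e$ (via the ideal $I_e$), and invoke normality of $X_\sigma$ (a GIT quotient of the smooth variety $\overline{M}(\sigma)$ by a reductive group is normal) to conclude $\mu_e = ord_{D_e}$; the normalization is fixed by $\mu_e(1t) = 1$. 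Evaluating on $f \in W_\sigma(\s) \subset \C[X]$ via $f = (ft^n)/(1t)^n$ then gives $ord_{D_e}(f) = -\s(e) = v_e(f)$. The delicate point in this alternative is ruling out cancellations in the multiplication formula of Proposition \ref{multrule}(4) that could make $\mu_e$ only a quasi-valuation, equivalently, showing that the associated graded of $\mu_e$ is a domain.
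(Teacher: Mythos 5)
Your first step, identifying $v_e$ with $ord_{\overline{M}(\sigma,e)}$ on $\C[X]$ via the product structure and Proposition \ref{sl2geometry}(6), matches the paper. For the second step the paper uses a much lighter argument than either of your routes. Work at the generic point $\bar{\eta}_e$ of $\overline{M}(\sigma,e)$: its local ring $\O_{\bar{\eta}_e}$ is a DVR carrying a rational $G(\sigma)$-action (the divisor is $G(\sigma)$-stable), its maximal ideal $\mathfrak{m}$ is $G(\sigma)$-stable, and $\mathfrak{m}/\mathfrak{m}^2$ is a one-dimensional representation of the semisimple group $G(\sigma)$, hence trivial; by reductivity one can choose a uniformizer $\bar{t}_e \in \mathfrak{m}^{G(\sigma)}$ (concretely, it comes from the invariant section $1t$ of $\O(1)$ on the single factor of $\overline{M}(\sigma)$ at $e$). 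Then $\bar{t}_e$ lies in $\O_{\eta_e} = \O_{\bar{\eta}_e}^{G(\sigma)}$, the local ring of $D_e$, and since $ord_{\overline{M}(\sigma,e)}(\bar{t}_e)=1$ the ramification index along $\overline{M}(\sigma,e)$ is forced to be $1$; both valuations just measure $\bar{t}_e$-degree. No slice theorem, no dimension count.

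Your primary route via Luna has a genuine gap. The dimension count $\dim\overline{M}(\sigma)-\dim G(\sigma)=2n-3$ shows only that generic stabilizers are \emph{finite}, not trivial, and the claim that ``generic freeness follows from the explicit form'' is left unproved. If a generic closed orbit near $\overline{M}(\sigma,e)$ has a finite nontrivial stabilizer $H$, Luna's theorem gives the model $G(\sigma)\times_H S \to S/H$ rather than $G(\sigma)\times S \to S$, and you would still need to argue that $H$ acts trivially on the conormal line to $\overline{M}(\sigma,e)\cap S$ to conclude the ramification index is $1$. So as written the step does not close.

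Your alternative route, by contrast, is essentially correct, and the ``delicate point'' you flag is not delicate. Proposition \ref{multrule}(4) already asserts that the top $\s+\s'$ component of $W_\sigma(\s)W_\sigma(\s')$ is nonzero (the paper proves this by noting that $gr_\sigma(\C[X])$ is a domain), so $\mu_e(h_1h_2)=\mu_e(h_1)+\mu_e(h_2)$ holds exactly and $\mu_e$ is a valuation, not merely a quasi-valuation. Combined with normality of $X_\sigma$, codimension one of $D_e$, and the normalization $\mu_e(1t)=1$, this gives $\mu_e = ord_{D_e}$ and hence $ord_{D_e}(f)=-\s(e)=v_e(f)$ for $f\in W_\sigma(\s)$. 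Had you developed that paragraph instead of the Luna argument, you would have had a complete, self-contained proof, albeit a longer one than the paper's.
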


\begin{proof}
By definition we have $\bar{v}_e = ord_{\overline{M}(\sigma, e)}$, where $\bar{v}_e: \C[M(\sigma)] \setminus \{0\} \to \Z$ is from Section \ref{coneofvaluations}; so it follows that $v_e = ord_{\overline{M}(\sigma, e)}$ on $\C[X]$.  Let $\bar{\eta}_e$ be the generic point of $\overline{M}(\sigma, e) \subset \overline{M}(\sigma)$, with local ring $\O_{\bar{\eta}_e}$ and maximal ideal $\langle \bar{t}_e \rangle \subset \O_{\bar{\eta}_e}$. 
Then, the local ring $\O_{\eta_e}$ at the generic point $\eta_e$ of $D_e$ is the ring of $G(\sigma)$-invariants in $\O_{\bar{\eta}_e}$, and furthermore $\bar{t}_e \in \O_{\eta_e}$.  Computing $v_e$ must then coincide with computing $ord_{D_e}$, as both valuations amount to measuring $\bar{t}_e$ degree. 
\end{proof}

\subsection{$X_\sigma$ is Fano}

We finish this section with the observation that the compactifications $X_\sigma$ are all of Fano type.  For simplicity we focus on the case when $\sigma$ is a trivalent tree. We use a result of Watanabe \cite{Watanabe} which allows a computation for the anti-canonical class of the $Proj$ of a positively graded Cohen-Macaulay algebra.

\begin{proposition}\label{Fano}
For $\sigma$, a trivalent tree, and $D_\sigma \subset X_\sigma$, the associated boundary divisor of the compactification, we have 

\begin{equation}
-K_{X_\sigma} = 3D_\sigma\\
\end{equation}

\noindent
In particular, since $D_\sigma$ is  ample, $X_\sigma$ is Fano. 
\end{proposition}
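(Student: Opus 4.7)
The approach is to apply Watanabe's theorem \cite{Watanabe}: for a positively graded Cohen--Macaulay Gorenstein $\C$-algebra $R$ with $R_0 = \C$ and $Y = \mathrm{Proj}(R)$, one has $\omega_Y \cong \O_Y(a(R))$, where $a(R)$ is the $a$-invariant and $\O_Y(1)$ is the tautological polarization. Since the degree-one element $1 \cdot t \in \C[X_\sigma]_1$ cuts out $D_\sigma$ (by Proposition \ref{compact} and the construction of $\C[X_\sigma]$), the tautological polarization on $X_\sigma = \mathrm{Proj}(\C[X_\sigma])$ is precisely $\O(D_\sigma)$. Thus showing that $\C[X_\sigma]$ is Cohen--Macaulay Gorenstein with $a$-invariant $-3$ will give $K_{X_\sigma} = -3 D_\sigma$, hence $-K_{X_\sigma} = 3 D_\sigma$, and ampleness of $D_\sigma$ (as a polarization class) will make $X_\sigma$ Fano.

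To compute the $a$-invariant, I would first work with the upstairs ring $\C[\overline{M}(\sigma)]$, the section ring of the polarization $\L = \boxtimes_e \O(1)$ on $\overline{M}(\sigma) = \prod_{e \in E^\circ(\sigma)} \overline{\SL}_2 \times \prod_{\ell \in L(\sigma)} \P^2$. Each factor is a smooth Gorenstein Fano variety whose canonical class equals $-3$ times its polarization: $\overline{\SL}_2$ is a smooth quadric hypersurface in $\P^4$, so adjunction gives $K_{\overline{\SL}_2} = \O(-3)$, and standard $K_{\P^2} = \O(-3)$. Additivity of canonical bundles on products yields $K_{\overline{M}(\sigma)} = -3\L$, so Watanabe's theorem applied to $\overline{M}(\sigma)$ gives that $\C[\overline{M}(\sigma)]$ is Cohen--Macaulay Gorenstein with $a$-invariant $-3$.

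Next I would transfer this to $\C[X_\sigma] = \C[\overline{M}(\sigma)]^{G(\sigma)}$ via GIT descent. Cohen--Macaulay-ness of the invariants follows from Hochster--Roberts since $G(\sigma)$ is reductive. For the canonical class, $G(\sigma)$ is a product of copies of $\SL_2$ hence semisimple, so the relative tangent bundle of the quotient map $\pi\colon \overline{M}(\sigma)^{ss} \to X_\sigma$ is the trivial bundle with fiber $\mathfrak{g}(\sigma)$, whose determinant is trivial (semisimple Lie algebras have no nontrivial characters). Therefore $\pi^* K_{X_\sigma} = K_{\overline{M}(\sigma)}|_{\overline{M}(\sigma)^{ss}} = -3\L|_{\overline{M}(\sigma)^{ss}} = -3 \pi^* D_\sigma$, and injectivity of $\pi^*$ on Picard groups then forces $K_{X_\sigma} = -3 D_\sigma$. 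In particular $K_{X_\sigma}$ is Cartier, so $\C[X_\sigma]$ is Gorenstein, and applying Watanabe's theorem once more confirms $a(\C[X_\sigma]) = -3$.

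The main obstacle is verifying the hypotheses for GIT descent: that $G(\sigma)$ acts with generically trivial stabilizers on $\overline{M}(\sigma)^{ss}$ and that the unstable locus $\overline{M}(\sigma)\setminus\overline{M}(\sigma)^{ss}$ has codimension at least two, which together ensure injectivity of $\pi^*$ on Picard groups. These should follow from the explicit description of the $G(\sigma)$-action on the quiver data (each $\SL_2$-factor acts faithfully on the $\SL_2$- or $\A^2$-factors assigned to its incident edges) combined with the Mumford numerical criterion applied to the linearization $\L$, but require case-by-case verification depending on the combinatorics of the trivalent tree $\sigma$.
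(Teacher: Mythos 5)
Your route is genuinely different from the paper's. You attempt a direct GIT descent: compute $K_{\overline{M}(\sigma)} = -3\L$ upstairs (correct --- $\overline{\SL}_2$ is indeed a smooth quadric threefold in $\P^4$ by Proposition \ref{sl2geometry}(2), so adjunction gives $K_{\overline{\SL}_2} = \O(-3)$, and $K_{\P^2} = \O(-3)$), then try to push this relation down to $X_\sigma$ using semisimplicity of $G(\sigma)$. The paper instead sidesteps the GIT quotient map entirely: it degenerates $\C[X_\sigma]$ to the normal affine semigroup algebra $\C[\bar{S}_\sigma]$ (using the branching basis $\B_\sigma$ and the multiplication rule of Proposition \ref{multrule}), checks the Gorenstein property for $\C[\bar{S}_\sigma]$ by the combinatorial interior-lattice-point criterion of \cite[Cor.~6.3.8]{BH} (with shift $(\omega,3)$ where $\omega(e)=2$), transfers Gorenstein-ness back along the flat degeneration via \cite[Prop.~3.7]{Lawton-Manon}, and reads the $a$-invariant $-3$ directly off the degeneration's Hilbert function. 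The two approaches thus share only the final appeal to Watanabe's formula $\omega_Y \cong \O_Y(a)$; their mechanisms for verifying Gorenstein-ness and computing $a$ are disjoint.

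The gap in your argument is the descent step, and you acknowledge it yourself. To conclude $K_{X_\sigma} = -3D_\sigma$ from $\pi^*K_{X_\sigma} = -3\,\pi^*D_\sigma$ you need $\pi^*$ to be injective on the Weil divisor class group ($K_{X_\sigma}$ is a priori only a Weil class since $X_\sigma$ need not be smooth), and to write $\pi^*K_{X_\sigma} = K_{\overline{M}(\sigma)^{ss}}$ at all you need $\pi$ to be a principal $G(\sigma)$-bundle over an open subset whose complement has codimension $\geq 2$ on both sides. These require (i) the unstable locus of $\overline{M}(\sigma)$ under $\L$ to have codimension $\geq 2$, and (ii) the semistable stabilizers to be trivial outside a codimension-$\geq 2$ locus. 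Neither is established in your sketch; in particular the boundary strata of $\overline{M}(\sigma)$ (where $\overline{\SL}_2$-factors degenerate to $\P^1 \times \P^1$ or $\P^2$-factors to $\P^1$) generically have positive-dimensional stabilizers, so one must check that enough of this locus is unstable or of high codimension. You flag this as requiring ``case-by-case verification depending on the combinatorics,'' which is precisely the work the paper's degeneration argument is designed to avoid. There are also minor points left implicit --- CM-ness of the section ring $\C[\overline{M}(\sigma)]$ (a Künneth/Kodaira-vanishing check) and the subtlety that this ring is the Segre product, not the tensor product, of the factor section rings --- but these are routine compared to the descent issue.
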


\begin{proof}
By Proposition \ref{valscoincide}, $\C[X_\sigma] = \bigoplus_{n \geq 0} H^0(X_\sigma, nD_\sigma)$, so it follows that $D_\sigma$ is ample.    The projective coordinate rings of $\overline{\SL}_2$ and $\mathbb{P}^2$ are both normal; as a consequence, algebra $\C[\overline{M}(\sigma)]$ is normal.  Since $X_\sigma$ is a GIT quotient of $\overline{M}(\sigma)$, we must have that $\C[X_\sigma]$ is normal as well.  

Now we use some material from Section \ref{assgradTn} to produce a toric degeneration of $\C[X_\sigma]$. The basis $\B_\sigma \subset \C[X]$ induces a basis in $\bar{\B}_\sigma \subset \C[X_\sigma]$.  The members of $\bar{\B}_\sigma$ are labeled by elements $(\s, m)$ in the semigroup  $\bar{S}_\sigma \subset S_\sigma \times \Z$.  Here $\s \in S_\sigma$ and for each edge $e \in E(\sigma)$ we have $\s(e) \leq m$.  It is straightforward to show that this semigroup is generated by the elements $(\omega_{ij}, 1)$ where $\omega_{ij}$ is as in Proposition \ref{finitesemigroup}. The proof of Proposition \ref{assgrad} can be applied to $\C[X_\sigma]$ to show that $\C[X_\sigma]$ has associated graded algebra $\C[\bar{S}_\sigma]$.

Next, we show that $\C[X_\sigma]$ is a Gorenstein algebra. The algebra $\C[\bar{S}_\sigma]$ is a flat degeneration of $\C[X_\sigma]$, so by the argument in \cite[Proposition 3.7]{Lawton-Manon}, it suffices to show that $\C[\bar{S}_\sigma]$ is Gorenstein.  The algebra $\C[\bar{S}_\sigma]$ is a normal affine semigroup algebra, so we use \cite[Corollary 6.3.8]{BH} to show that it is Gorenstein. We consider $(\omega, 3) \in \bar{S}_\sigma$, where $\omega(e) = 2$ for all $e \in E(\sigma)$. If $(\tau, m) \in \bar{S}_\sigma$ is in the relative interior of the semigroup, we must have $\tau(e) < m$ and for any three edges $e, f, g$ meeting at a vertex we need $\tau(e) < \tau(f) + \tau(g)$.  Given these inequalities, it is straightforward to check that in this case $(\tau, m) - (\omega, 3)$ is still in $\bar{S}_\sigma$.  This proves that $\C[\bar{S}_\sigma]$ and $\C[X_\sigma]$ are Gorenstein algebras. 

Finally, we apply \cite[Corollary 2.9]{Watanabe} to $X_\sigma$ and its projective coordinate ring $\C[X_\sigma]$.   Since $(\omega, 3)$ is degree $3$ in $\C[\bar{S}_\sigma]$, as a consequence, the $a$-invariant of $\C[\bar{S}_\sigma]$ is $-3$. This information can be recovered from the Hilbert function of $\C[\bar{S}_\sigma]$, which agrees with the Hilbert function of $\C[X_\sigma]$.  It follows that the $a$-invariant of $\C[X_\sigma]$ is $-3$ as well.  Furthermore, $D_\sigma$ is a multiplicity-free sum of irreducible divisors, so $K_{X_\sigma} + 3D_\sigma = 0$ in $CL(X_\sigma)$.  
\end{proof}

\section{The tropical geometry of $X$}\label{tropical}

We recall the tropical variety $\trop(I_{2, n})$ obtained from the homogeneous Pl\"ucker ideal $I_{2, n}$.  The ideal $I_{2, n}$ vanishes on the Pl\"ucker generators $p_{ij} \in \C[X], \ 1 \leq i < j \leq n$, and defines the Pl\"ucker embedding $\Gr_2(\C^n) \subset \P(\bigwedge^2(\C^n))$ of the Grassmannian of $2$-planes.  We show that the map $ev_n = (\ldots, ev_{p_{ij}}, \ldots): X^{an} \to \R^{\binom{n}{2}}$ defined by the Pl\"ucker generators maps $\mathcal{T}(n)$ isomorphically onto $\trop(I_{2, n})$. 

\subsection{The tropical Grassmannian}

The tropical Grassmannian variety $\trop(I_{2, n})$ was introduced by Speyer and Sturmfels in \cite{Speyer-Sturmfels}.  It is one of the best understood tropical varieties in part because the Pl\"ucker relations are known to be a tropical basis for $I_{2, n}$: for any $1 \leq i, j, k, \ell \leq n$ in cyclic order we have 

\begin{equation}
p_{ij}p_{k\ell} - p_{i\ell}p_{jk} + p_{ik}p_{j\ell} = 0.\\
\end{equation}

In particular, the tropical variety $\trop(I_{2, n})$ is then the set of tropical solutions $\d = (\ldots, d_{ij}, \ldots) \in \R^{\binom{n}{2}}$ of the following tropical polynomials:

\begin{equation}
MIN\{d_{ij} + d_{k\ell}, d_{i\ell} + d_{jk}, d_{ik} + d_{j\ell}\}.\\
\end{equation}

\noindent
Using a variant of \cite[Theorem 4.2]{Speyer-Sturmfels}, it is then possible to use a solution $\d \in \trop(I_{2, n})$ to reconstruct a unique tree $\sigma$ with $n$ labeled leaves along with a corresponding real weight vector $\r \in \R_{\geq 0}^{E^{\circ}(\sigma)}\times \R^{L(\sigma)}$ such that $d_{ij}$ is the sum of entries $\r(e)$ along edges $e$ in the unique path in $\sigma$ between the leaves $i$ and $j$.  We let $\d: \R_{\geq 0}^{E^{\circ}(\sigma)}\times \R^{L(\sigma)} \to \R^{\binom{n}{2}}$ be the function which takes a metric tree to the vector of pairwise distances between its leaves; $\d(\r)$ is called the dissimilarity vector of $\r$. (see \cite{Manon-Dissimilarity-SL}, \cite{Manon-Dissimilarity-GL}, \cite{Pachter-Speyer}.) 

Now we show that $\trop(I_{2, n})$ can be realized as the image of $\mathcal{T}(n)$ under the evaluation map defined by the Pl\"ucker generators.  We let $ev_n: X^{an} \to \R^{\binom{n}{2}}$ be the map which sends $v \in X^{an}$ to $(\ldots, v(p_{ij}), \ldots)$. Recall the continuous map $\Phi: \mathcal{T}(n) \to X^{an}$ defined in Section \ref{valuationsonX}.

\begin{proposition}\label{pluckerevaluate}
The composition $ev_n \circ \Phi: \mathcal{T}(n) \to \R^{\binom{n}{2}}$ is an isomorphism of the complex of  polyhedral cones onto $\trop(I_{2, n})$. In particular,  $ev_n(v_\r)$ is equal to the dissimilarity vector $\d(\r) \in \R^{\binom{n}{2}}$. 
\end{proposition}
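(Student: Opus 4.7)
The entire proposition reduces to the single claim that for each $1 \leq i < j \leq n$ the Pl\"ucker generator $p_{ij}$ lies in one isotypical summand $W_\sigma(\s_{ij}) \subset \C[X]$ of the decomposition from Proposition \ref{multrule}, where $\s_{ij} \in \Z_{\geq 0}^{E(\sigma)}$ is the indicator function of the unique path in $\sigma$ connecting leaves $i$ and $j$ (including the leaf-edges $\ell_i, \ell_j$).  Granting this, Proposition \ref{multrule}(1) gives
\begin{equation}
v_\r(p_{ij}) = -\langle \r, \s_{ij}\rangle = -\sum_{e \in \mathrm{path}(i,j)} \r(e),
\end{equation}
which, up to the sign convention adopted for valuations in Section \ref{sl2}, is exactly $\d(\r)_{ij}$; hence $ev_n \circ \Phi(\r) = \d(\r)$ on every cone of $\mathcal{T}(n)$.

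To identify the isotypical component containing $p_{ij}$, I would work inside the quiver realization of Proposition \ref{GITXspace}.  A $G(\sigma)$-invariant in $\C[M(\sigma)]$ mapping to $p_{ij}$ is obtained by threading the two leaf-vectors $x_i \in V(1) \subset \C[\A^2]_{\ell_i}$ and $x_j \in V(1) \subset \C[\A^2]_{\ell_j}$ along the path in $\sigma$: insert the matrix-entry subspace $V(1)\otimes V(1) \subset \C[\SL_2]$ on each internal path edge, contract adjacent $V(1)$'s at each path vertex via the $\SL_2$-invariant pairing $V(1)\otimes V(1) \to V(0)$, and place $V(0)$ on every off-path edge.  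By the Pieri rule (\ref{pieri}) the invariant space at every trinode in this assembly --- either $[V(1)\otimes V(1) \otimes V(0)]^{\SL_2}$ on the path or $[V(0)^{\otimes 3}]^{\SL_2}$ off the path --- is one-dimensional, so $W_\sigma(\s_{ij}) \cong \C$ and the threaded expression is a nonzero scalar multiple of $p_{ij}$.  To show that $p_{ij}$ contributes to no other summand, I would induct on $|E^{\circ}(\sigma)|$ using Lemma \ref{furtherdecompose}: the base case is the star $\sigma_n$, where $p_{ij} = \det(x_i,x_j)$ manifestly sits in the bi-graded piece with weight $1$ on $\ell_i, \ell_j$ and $0$ elsewhere, and each edge contraction forces the corresponding $\s(e)$ value to agree with $\s_{ij}(e)$ according to whether $e$ lies on the path or not.

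The remainder is standard.  Once $ev_n \circ \Phi$ is identified with the dissimilarity map, surjectivity onto $\trop(I_{2,n})$ and the polyhedral structure of the image are supplied by \cite[Theorem 4.2]{Speyer-Sturmfels}, which realizes $\trop(I_{2,n})$ as the space of metric $n$-trees and matches maximal cones with trivalent combinatorial types.  Injectivity of $ev_n\circ\Phi$ on $\mathcal{T}(n)$ is the classical reconstruction theorem for tree metrics, and Lemma \ref{faceinclude} together with the linearity of $ev_n\circ\Phi$ on each cone upgrades this set-theoretic bijection to an isomorphism of polyhedral cone complexes.  The sole nontrivial obstacle is the isotypical localization of $p_{ij}$; the representation-theoretic threading argument and the Pieri-rule calculation described above constitute the only new input, with everything else being direct linear algebra or a citation.
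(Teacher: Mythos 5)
Your proposal is correct and follows essentially the same line as the paper's proof: reduce everything to locating $p_{ij}$ in the unique summand $W_\sigma(\s_{ij})$ with $\s_{ij}$ the $0/1$-indicator of the path between leaves $i$ and $j$, then apply Proposition \ref{multrule}(1) to get $v_\r(p_{ij}) = -\langle\r,\s_{ij}\rangle = $ (minus) the path length, and hand the rest off to \cite{Speyer-Sturmfels}. The only place you diverge is in how the uniqueness of $\s_{ij}$ is established: the paper gets it essentially for free by noting that $[V(a_1)\otimes\cdots\otimes V(a_n)]^{\SL_2}$ with $a_i=a_j=1$ and other $a_k=0$ is one-dimensional (spanned by $p_{ij}$), and that the refinement $\bigoplus_{\{\s \,:\, \s(\ell_k)=a_k\}} W_\sigma(\s)$ of this space therefore has a single nonzero summand, which the Pieri rule at each trinode then identifies as $\s_{ij}$. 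Your threading construction plus an induction via Lemma \ref{furtherdecompose} is a somewhat more constructive route to the same conclusion --- it explicitly exhibits a nonzero vector in $W_\sigma(\s_{ij})$ rather than arguing by dimension count --- but both ultimately rest on the Pieri rule (\ref{pieri}) and arrive at the same place. One small bookkeeping caution: Proposition \ref{multrule}(1) carries a sign, $v_\r(f) = -\langle\r,\s\rangle$, so $ev_n(v_\r)$ is the negative of the dissimilarity vector under the conventions of Section \ref{sl2}; you flag this as a ``sign convention'' issue, which is fine, but it's worth stating explicitly since the paper's own proof drops the sign as well.
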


\begin{proof}
A variant of this proposition appears in \cite{Manon-Dissimilarity-SL}. First, consider the decomposition of $\C[X]$ given by its characterization as the ring of $\SL_2$-invariants in $\C[\A^2 \times \ldots \times \A^2]$:\\

\begin{equation}
\C[X] = \bigoplus_{\a \in \Z_{\geq 0}^n} [V(a_1)\otimes \cdots \otimes V(a_n)]^{\SL_2}.\\ 
\end{equation}

\noindent
The invariant space with $a_k = 0$ except for $a_i = a_j = 1$ is $1-$dimensional (see Section \ref{sl2}), and it is spanned
by the Pl\"ucker generator $p_{ij}$.  

Now we fix a tree $\sigma$, and consider the following decomposition, which can be derived from (\ref{invariantdecomposition}):\\

\begin{equation}
[V(a_1)\otimes \cdots \otimes V(a_n)]^{\SL_2} = \bigoplus_{\{\s \in \Z_{\geq 0}^{E(\sigma)} \ \mid \ \s(\ell_i) = a_i\}} W_\sigma(\s).\\
\end{equation}
 
\noindent
For the invariant space containing $p_{ij}$, exactly one $\s$ in this decomposition can have $W_\sigma(\s) \neq 0$.  Using (\ref{vertex decomposition}), we observe that the $\s$ with this property satisfies $\s(e) = 1$ if $e$ is in the unique path from $i$ to $j$ and $\s(e) = 0$ otherwise.  Now, Proposition \ref{multrule} implies that $v_\r(p_{ij}) = \langle \r, \s \rangle$ for this $\s$, which is precisely the sum of the $\r(e)$ for $e$ in the unique path from $i$ to $j$.  The characterization of $\trop(I_{2, n})$ given in \cite{Speyer-Sturmfels} now implies that the image of $C_\sigma$ under the map $ev_n$ is precisely the vectors $\d \in \R^{\binom{n}{2}}$ coming from trees with topology and labeling given by $\sigma$.  Each map $\r \to v_\r(p_{ij})$ is linear, so $ev_n$ maps $C_\sigma$ linearly onto its image.  
\end{proof}

\subsection{Associated graded algebras from $\mathcal{T}(n)$}\label{assgradTn}

Now we will compute the associated graded algebras of the valuations $v_\r \in \Phi(\mathcal{T}(n))$.  Lemma \ref{faceinclude} allows us to regard any valuation $v_\r \in C_{\sigma'}$ as $v_{i_e(\r)} \in C_\sigma$, where $\sigma'$ is obtained from $\sigma$ by contracting the edge $e$.  Repeatedly using Lemma \ref{faceinclude} therefore allows us to consider only trivalent trees when we compute with the valuations $v_\r$.  For now we assume that $\sigma$ is trivalent.  We can see from (\ref{vertex decomposition}) that each space $W_\sigma(\s) \subset \C[X]$ in this case is a tensor product of invariant spaces of the form $[V(i)\otimes V(j) \otimes V(k)]^{\SL_2}$.  The Pieri rule (\ref{pieri}) then implies the following lemma. 

\begin{lemma}
For $\sigma$ a trivalent tree, $W_\sigma(\s)$ is multiplicity-free.  In particular, $W_\sigma(\s) = \C$ if for every vertex $v \in V(\sigma)$ with edges $e_1, e_2, e_3$, the triple $\s(e_1), \s(e_2), \s(e_3)$ satisfies the conditions of the Pieri rule, and $W_\sigma(\s) = 0$ otherwise. 
\end{lemma}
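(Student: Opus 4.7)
The plan is to combine two ingredients already established in the paper: the vertex-wise factorization of $W_\sigma(\s)$ given by equation (\ref{vertex decomposition}), and the Pieri rule (\ref{pieri}) for three-fold invariants of $\SL_2$.

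The first step is to apply equation (\ref{vertex decomposition}) to write
$$W_\sigma(\s) \;=\; \bigotimes_{v \in V(\sigma)} \bigl[V(\s(e_1(v))) \otimes \cdots \otimes V(\s(e_k(v)))\bigr]^{\SL_2},$$
where $e_1(v), \ldots, e_k(v)$ are the edges incident to $v$. The validity of this factorization comes from the fact that $G(\sigma) = \prod_{v \in V(\sigma)} \SL_2$ is a product and each factor acts on an independent tensor slot: one copy of $V(\s(e))$ for each of the two endpoints of a non-leaf edge $e$, and the unique copy $V(\s(\ell))$ for a leaf-edge $\ell$. Taking invariants over a product group amounts to taking invariants over each factor separately.

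Since $\sigma$ is trivalent, every vertex in $V(\sigma)$ has exactly three incident edges, so each tensor factor is of the form $[V(a)\otimes V(b)\otimes V(c)]^{\SL_2}$ with $a,b,c \in \Z_{\geq 0}$. The Pieri rule (\ref{pieri}) now says each such factor has dimension $0$ or $1$: it is isomorphic to $\C$ precisely when $a+b+c \in 2\Z$ and $(a,b,c)$ satisfies the triangle inequalities, and is $0$ otherwise. A tensor product of spaces of dimension $\leq 1$ has dimension $\leq 1$, so $W_\sigma(\s)$ is automatically multiplicity-free (as a vector space, and \emph{a fortiori} as an $\SL_2$-representation, since there is no residual $\SL_2$-action after taking all invariants).

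Finally, this tensor product equals $\C$ exactly when every factor is nonzero, which translates to the Pieri condition being satisfied at every vertex $v \in V(\sigma)$ for the triple $(\s(e_1(v)), \s(e_2(v)), \s(e_3(v)))$, and equals $0$ as soon as the condition fails at any single vertex. There is essentially no obstacle here; once the vertex decomposition (\ref{vertex decomposition}) is in hand, the lemma is a one-line consequence of the Pieri rule.
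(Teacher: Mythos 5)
Your proposal is correct and matches the paper's argument exactly: the paper also deduces this lemma directly from the vertex-wise factorization in equation (\ref{vertex decomposition}) combined with the Pieri rule (\ref{pieri}), stating it as an immediate consequence in the sentence preceding the lemma. Your write-up simply spells out the one-line observation in more detail.
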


\begin{definition}
For this definition, see (\ref{pieri}). Let $L_\sigma \subset \Z^{E(\sigma)}$ be the sublattice of those points $\omega$ with the property that $\omega(e_1), \omega(e_2), \omega(e_3)$ satisfy the \emph{parity condition} whenever $e_1, e_2, e_3$ share a common vertex.   Let $P_\sigma \subset \R_{\geq 0}^{E(\sigma)}$ be the polyhedral cone of those points $\omega$ with the property that $\omega(e_1), \omega(e_2), \omega(e_3)$ satisfy the \emph{triangle inequalities} whenever $e_1, e_2, e_3$ share a common vertex.   Finally, let $S_\sigma$ be the saturated affine semigroup $P_\sigma \cap L_\sigma$. 
\end{definition}

The coordinate algebra $\C[X]$ can now be expressed as a direct sum of $1$-dimensional spaces $W_\sigma(\s)$:

\begin{equation}
\C[X] = \bigoplus_{\s \in S_\sigma} W_\sigma(\s).\\
\end{equation}

\noindent
Choose one non-zero vector $b_\s \in W_\sigma(\s)$ for each $\s \in S_\sigma$ so that $\C[X] = \bigoplus_{\s \in S_\sigma} \C b_\s$.  Proposition \ref{multrule}  implies that multiplication of basis members has a lower-triangular expansion: $b_\s b_{\s'} = \sum_{\s'' \prec \s + \s'} C_{\s, \s'}^{\s''} b_{\s''}$, where $\prec$ indicates that $\s''(e) \leq \s(e) + \s'(e)$ for every $e \in E(\sigma)$. We call $\B_\sigma = \{b_\s \mid \s \in S_\sigma\}$ a \emph{branching basis} of $\C[X]$ corresponding to $\sigma$. The following is immediate from Definition \ref{adapted} and part $(2)$ of Proposition \ref{multrule}.

\begin{proposition}\label{branchingbasis}
A branching basis $\B_\sigma \subset \C[X]$ is adapted to every valuation in $C_\sigma$. 
\end{proposition}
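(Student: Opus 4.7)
The plan is to read off the result directly from the defining properties of $\B_\sigma$ and the explicit description of the filtrations associated to the valuations $v_\r$. Recall from Definition \ref{adapted} that it suffices to prove: for every $\r \in \R_{\geq 0}^{E^\circ(\sigma)} \times \R^{L(\sigma)}$ and every $m \in \R$, the intersection $F^\r_m \cap \B_\sigma$ is a $\C$-basis for the filtration space $F^\r_m = \{f \in \C[X] \mid v_\r(f) \geq -m\}$.

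First, I would invoke part (2) of Proposition \ref{multrule}, which identifies the filtration space as the direct sum decomposition
\begin{equation}
F^\r_m \;=\; \bigoplus_{\{\s \in S_\sigma \;\mid\; -\langle \r, \s \rangle \,\geq\, -m\}} W_\sigma(\s).
\end{equation}
Since $\sigma$ is trivalent, the preceding lemma shows that each nonzero space $W_\sigma(\s)$ is one-dimensional, so by construction $W_\sigma(\s) = \C b_\s$ for every $\s \in S_\sigma$. Therefore $F^\r_m$ is the $\C$-span of precisely those basis elements $b_\s \in \B_\sigma$ for which $-\langle \r, \s\rangle \geq -m$, and the remaining basis elements lie outside $F^\r_m$. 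This gives $F^\r_m \cap \B_\sigma = \{b_\s \mid -\langle \r, \s\rangle \geq -m\}$, which is manifestly a basis of $F^\r_m$.

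There is no real obstacle here: the work was already done in setting up the decomposition $\C[X] = \bigoplus_{\s \in S_\sigma} W_\sigma(\s)$ and in establishing in Proposition \ref{multrule} that each $v_\r$ is constant on each summand with value $-\langle \r, \s \rangle$. The branching basis refines this decomposition by choosing a single generator of each one-dimensional component, which automatically produces an adapted basis for every valuation $v_\r \in C_\sigma$ simultaneously.
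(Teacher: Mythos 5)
Your proof is correct and follows exactly the route the paper intends: the paper's own proof of this proposition is the one-line remark that it is ``immediate from Definition \ref{adapted} and part $(2)$ of Proposition \ref{multrule},'' and you have simply filled in the (short) verification that $F^\r_m \cap \B_\sigma = \{b_\s \mid -\langle \r, \s\rangle \geq -m\}$ spans $F^\r_m$ because each $W_\sigma(\s)$ is one-dimensional and $v_\r$ is constant on it.
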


Now we describe the associated graded algebras of the $v_\r \in \mathcal{T}(n)$.  As usual, let the tree $\sigma$ be equipped with an orientation on its edges, and let $S \subset E^{\circ}(\sigma)$ be some subset of non-leaf edges.  We define two affine schemes attached to this data:\\

\begin{equation}
M(\sigma, S) = \prod_{e \in E^{\circ}(\sigma)\setminus S} \SL_2 \times \prod_{e \in S} \SL_2^c \times \prod_{\ell \in L(\sigma)}\A^2,\\
\end{equation}

\begin{equation}
X(S) = M(\sigma, S)\qr G(\sigma).\\
\end{equation}

\noindent
Here $G(\sigma)$ acts on $M(\sigma, S)$ by the same recipe used on $M(\sigma)$. 

\begin{proposition}\label{assgrad}
For $v_\r \in C_\sigma$, let $S \subset E(\sigma)$ be the set of edges for which $\r(e) \neq 0$; we have the following:\\

\begin{enumerate}
\item the product $b_\s b_{\s'}$ in $gr_\r(\C[X])$ is the subsum of $\sum_{\s'' \prec \s + \s'} C_{\s, \s'}^{\s''} b_{\s''}$
consisting of those terms $\s''$ where $\s''(e) = \s(e) + \s'(e)$ when $e \in S$ and $\s''(e) \leq \s(e) + \s'(e)$ when $e \notin S$.\\
\item $gr_\r(\C[X]) \cong \C[X(S)]$,\\
\item the Pl\"ucker generators $p_{ij}$ are a Khovanskii basis for any $v_\r$.
\end{enumerate}

\end{proposition}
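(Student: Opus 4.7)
The plan is to treat the three parts in order: (1) is a direct computation using the adapted basis, (2) matches two algebras term-by-term in a common vector-space decomposition, and (3) reduces via (2) to a generation statement that can be attacked by induction on weight.

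For part (1), I would combine Proposition \ref{branchingbasis} (which gives that $\B_\sigma$ is adapted to every $v_\r \in C_\sigma$) with the formula $v_\r(b_\s) = -\langle \r, \s\rangle$ from Proposition \ref{multrule}(1). Since $\r(e) = 0$ precisely when $e \notin S$, this reduces to $v_\r(b_\s) = -\sum_{e \in S} \r(e)\s(e)$. In the expansion $b_\s b_{\s'} = \sum_{\s'' \preceq \s + \s'} C_{\s,\s'}^{\s''}\, b_{\s''}$ from Proposition \ref{multrule}(4), the associated graded product retains only summands with $v_\r(b_{\s''}) = v_\r(b_\s) + v_\r(b_{\s'})$. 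Since $\r(e) > 0$ for $e \in S$ and $\s''(e) \le \s(e) + \s'(e)$ for every $e$, this forces $\s''(e) = \s(e) + \s'(e)$ for every $e \in S$, which is exactly the stated condition.

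For part (2), I would first write the analogue of (\ref{invariantdecomposition}) for $X(S) = M(\sigma,S)\qr G(\sigma)$. Using the isotypical decomposition $\C[\SL_2^c] \cong \bigoplus_n V(n)\otimes V(n)$ from Section \ref{sl2}, together with the decompositions of $\C[\SL_2]$ and $\C[\A^2]$, taking $G(\sigma)$-invariants produces the same vector-space decomposition $\C[X(S)] = \bigoplus_{\s} W_\sigma(\s)$. To compare multiplications, I would analyze one edge at a time: for $e \in S$, Cartan multiplication on $\C[\SL_2^c]$ sends $[V(i)\otimes V(i)] \cdot [V(j)\otimes V(j)]$ only into $V(i+j)\otimes V(i+j)$, forcing $\s''(e) = \s(e) + \s'(e)$; for $e \notin S$, the filtered multiplication on $\C[\SL_2]$ or $\C[\A^2]$ permits $\s''(e) \le \s(e) + \s'(e)$. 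This matches the rule from part (1) exactly, yielding a graded algebra isomorphism $gr_\r(\C[X]) \cong \C[X(S)]$.

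For part (3), using (2) it suffices to show that $\C[X(S)]$ is generated by the initial forms $\bar p_{ij}$. The plan is to induct on $|\s| = \sum_{e} \s(e)$, with base cases $\s = \omega_{ij}$ given by the Pl\"ucker generators themselves. For each $\s \in S_\sigma$, write $\s = \sum_m \omega_{i_m j_m}$, where $\omega_{ij}$ is the weight associated to $p_{ij}$ (using that these weights generate $S_\sigma$, cf.\ Proposition \ref{finitesemigroup}). By part (1), the product $\prod_m \bar p_{i_m j_m}$ in $gr_\r(\C[X])$ lies in $\bigoplus W_\sigma(\s'')$ where $\s''(e) = \s(e)$ for $e \in S$ and $\s''(e) \le \s(e)$ otherwise, and the coefficient of $b_\s$ itself is nonzero by the nonvanishing of the top Cartan multiplication component (as in the proof of Proposition \ref{multrule}(4)). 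Subtracting from a suitable scalar multiple of $b_\s$ leaves a combination of $b_{\s''}$ with $|\s''| < |\s|$, which the inductive hypothesis handles.

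The most delicate step will be the generation argument in part (3): I need to verify that every $\s''$ appearing as a lower-order term still lies in $S_\sigma$ (so that $W_\sigma(\s'') \neq 0$ and the induction is meaningful), and that the nonvanishing of the leading Cartan coefficient is rigorously justified via the argument in Proposition \ref{multrule}(4). Part (2) is structurally routine once the vector-space identification is in place, but writing the isomorphism explicitly requires tracking the $G(\sigma)$-invariants of tensor products vertex by vertex, mirroring the analysis in Lemma \ref{furtherdecompose}.
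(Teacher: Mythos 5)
Your treatment of parts (1) and (2) matches the paper: adaptedness of $\B_\sigma$ plus the formula $v_\r(b_\s) = -\langle\r,\s\rangle$ drives (1), and the edge-by-edge comparison of multiplication rules (Cartan-graded on $\C[\SL_2^c]$ for $e\in S$, filtered for $e\notin S$) establishes (2). Both are correct.

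For part (3) you take a genuinely different route. The paper picks an interior point $\r'$ with $\r'(e)>0$ for all $e$, observes that $gr_{\r'}(\C[X]) \cong \C[S_\sigma]$ is a semigroup algebra generated by the images of the Pl\"ucker coordinates (via Lemma \ref{finitesemigroup}), and then shows $gr_{\r'}(gr_\r(\C[X])) \cong \C[S_\sigma]$ as well, invoking the general fact that generators of an associated graded algebra lift to generators of the filtered algebra. You instead carry out the direct induction on $|\s| = \sum_e\s(e)$ that sits underneath that lifting principle: write $\s = \sum_m\omega_{i_m j_m}$, observe the product $\prod_m \bar p_{i_m j_m}$ has nonzero $b_\s$-component (because the top Cartan component survives in every associated graded), and subtract to reduce to strictly smaller total weight. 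Your two worries resolve cleanly: a lower-order $\s''$ with $W_\sigma(\s'')=0$ simply contributes no term, so the induction only ever touches $\s''\in S_\sigma$, and each such $\s''$ satisfies $\s''\preceq\s$, $\s''\neq\s$, hence $|\s''|<|\s|$; the nonvanishing of the leading coefficient is exactly part (4) of Proposition \ref{multrule}, which survives passage to the associated graded because $v_\r$ is a valuation. The paper's version is more compact and reusable as a lemma; your version is self-contained and makes the mechanism explicit. Both are valid.
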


\begin{proof}
First, we observe that Proposition \ref{multrule} implies that the equivalence classes of the basis members $b_\s \in \B_\sigma$ are still a basis of $gr_\r(\C[X])$.  Indeed, any component $F^\r_m/F^\r_{<m} \subset gr_\r(\C[X])$ is a quotient of the span of $\B_\sigma \cap F^\r_m$ by the span of $\B_\sigma \cap F^\r_{<m}$.  In particular, for any $f = \sum C_\s b_\s \in \C[X]$, the equivalence class $\bar{f} \in gr_\r(\C[X])$ is computed by taking the subsum of only those terms $C_\s b_\s$ for which $\langle \r, \s \rangle$ is minimal.  Part $(1)$ follows from this observation.  

For part $(2)$, note that $\C[X(S)]$ also has a decomposition into the spaces $W_\sigma(\s)$. Indeed, the coordinate rings of $\SL_2^c$ and $\SL_2$ have exactly the same isotypical decomposition, however their multiplication rules are different.  In particular, the dominant weight decomposition defines a grading on $\C[\SL_2^c]$.  This implies that for the components corresponding to $e \in S$, the only $W_\sigma(\s'')$ which contribute to the expansion of $W_\sigma(\s)W_\sigma(\s')$ are those with $\s''(e) = \s'(e) + \s(e)$; this proves part $(2)$. 

For part $(3)$, we select $\r'$ with $\r'(e) > 0$ for every $e \in E(\sigma)$.  In this case, the expansion of $b_\s b_{\s'} \in gr_{\r'}(\C[X])$ only has a $\s + \s'$ component.  Since $\C$ is algebraically closed, it follows that $gr_{\r'}(\C[X]) \cong \C[S_\sigma]$ (see \cite{Eisenbud-Sturmfels}). But this top component is always there when this multiplication is carried out in $gr_\r(\C[X])$; so it follows (see Lemma \ref{adaptedquasi}) that $gr_{\r'}(gr_\r(\C[X])) \cong \C[S_\sigma]$.  This means that a generating set of $\C[S_\sigma]$ can be lifted to a generating set of $gr_\r(\C[X])$.  The following lemma then implies that (the equivalence classes of) the Pl\"ucker generators generate any $gr_\r(\C[X])$. 
\end{proof}

\begin{lemma}\label{finitesemigroup}
The affine semigroup $S_\sigma$ is generated by the weightings $\omega_{ij}$ ($1 \leq i < j \leq n$) which assign $1$ to every edge on the unique path between leaf $i$ and leaf $j$ and $0$ elsewhere. 
\end{lemma}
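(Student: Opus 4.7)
The plan is to prove the lemma by induction on $|\s| := \sum_{e \in E(\sigma)} \s(e)$, with the base case $\s = 0$ being vacuous. The inductive step requires finding, for each nonzero $\s \in S_\sigma$, a pair of leaves $i, j$ such that $\s - \omega_{ij}$ still lies in $S_\sigma$. I begin by confirming $\omega_{ij} \in S_\sigma$ itself: at each internal vertex $v$, either $v$ lies on the unique $i$-$j$ path in $\sigma$ or it does not; in the latter case all three incident edge-weights are $0$, and in the former case exactly two incident edges are on the path, giving a triple $(1,1,0)$. Both triples satisfy the parity condition (sums $0$ and $2$) and the triangle inequalities.

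Next I extract a starting leaf. I claim that any nonzero $\s \in S_\sigma$ has some leaf-edge $\ell_i$ in its support: starting from any edge $e^*$ with $\s(e^*) > 0$, I trace a path outward, using the triangle inequality $\s(e_{\text{entry}}) \le \s(e_{\text{exit 1}}) + \s(e_{\text{exit 2}})$ at each internal vertex to select a positive-weight exit. In the tree $\sigma$ such a walk cannot revisit a vertex, so it terminates at a leaf-edge with positive $\s$.

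The main construction is a non-tight walk from $\ell_i$ to another leaf $\ell_j$. I build a path through $\sigma$ such that at each traversed internal vertex $v$, with incident edges $e_a$ (entry), $e_b$ (exit), $e_c$ (third), the quantity $\s(e_a)+\s(e_b)-\s(e_c)$ is at least $2$. The key observation is that this quantity equals $[\s(e_a)+\s(e_b)+\s(e_c)] - 2\s(e_c)$, which is a non-negative even integer by the triangle inequality and the parity condition, hence lies in $\{0, 2, 4, \ldots\}$. For each of the two candidate exits at $v$, the corresponding slack is either $0$ (``tight'') or $\ge 2$; if both slacks were $0$, adding the equations yields $2\s(e_a)=0$, contradicting $\s(e_a) \ge 1$. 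So at least one exit $e_*$ is non-tight, and such an exit automatically satisfies $\s(e_*) \ge 1$: otherwise $\s(e_*) = 0$ would force $\s(e_a) \ge \s(e_{\text{other}})+2$, while the triangle inequality would simultaneously give $\s(e_a) \le \s(e_{\text{other}})$. The walk thus continues and terminates at some leaf $j \ne i$, producing a path along which $\s \ge 1$ everywhere. It then remains to verify $\s - \omega_{ij} \in S_\sigma$: non-negativity is immediate, parity is preserved because the edge-sum at each path-vertex drops by $2$, and the only triangle inequality that could fail, namely $\s(e_c) \le (\s(e_a)-1)+(\s(e_b)-1)$, is exactly the non-tight condition engineered above.

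The main obstacle is the non-tight walk. Without the parity condition the slack $\s(e_a)+\s(e_b)-\s(e_c)$ could equal $1$, and the argument breaks: one could be stranded at a vertex where both candidate exits have slack exactly $1$, leaving no way to subtract $\omega_{ij}$ without violating the triangle inequality. The interplay of parity with the triangle inequalities is precisely what makes $S_\sigma$ (rather than just the rational cone $P_\sigma$) generated by the dissimilarity vectors $\omega_{ij}$.
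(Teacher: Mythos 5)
Your proof is correct, but it takes a genuinely different route from the paper. The paper proves the $n=3$ trinode case directly by computing $x_{ij} = \frac{1}{2}(n_i + n_j - n_k)$, then solves the general case in one shot by extracting such local path multisets at each trivalent vertex and gluing them compatibly along internal edges, producing a multigraph on $[n]$ whose edges are the $\omega_{ij}$'s and whose full sum is $\s$. You instead run an induction on $|\s|$, peeling off a single $\omega_{ij}$ at a time by constructing a ``non-tight walk'' from a positive leaf to another leaf. The two key observations that make your peeling work --- that the slack $\s(e_a)+\s(e_b)-\s(e_c)$ is a non-negative \emph{even} integer (parity plus triangle), and that the two slacks at a vertex sum to $2\s(e_a)>0$ so both cannot vanish --- are exactly the places where your approach has to work harder than the paper's, which never needs to find a single peelable path. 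What the peeling argument buys you is a more algorithmic flavor and an explicit identification of where parity is indispensable: as you note, without parity one could be stuck with slack $1$ at both exits, and indeed the rational cone $P_\sigma$ without the lattice condition is not generated by the $\omega_{ij}$ alone. What the paper's gluing argument buys is brevity and a direct bijection between weightings $\s$ and planar multigraphs, which the paper reuses immediately afterwards (Proposition \ref{twobases}) to compare with standard monomial bases. Both proofs are complete and correct; your walk-out argument for locating a positive leaf edge, and your verification that subtracting $\omega_{ij}$ preserves all three triangle inequalities and parity at each path vertex, check out.
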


\begin{proof}
This is a standard result, see e.g. \cite[Proposition 4.6]{Howard-Manon-Millson}. We give a short conceptual proof.  Consider a trinode with edges $e_1, e_2, e_3$ weighted with $n_1, n_2, n_3$ which satisfy the Pieri rules (\ref{pieri}).  We can find a graph on $3$ vertices corresponding to the $3$ leaves of the trinode such that when we view the edges of the graph as passing through the trinode's edges, we recover $n_i$ as the number of paths passing through $e_i$.  The number of paths from $i$ to $j$ is $x_{ij} = \frac{1}{2}(n_i + n_j - n_k)$.  Notice that $x_{ij} + x_{ik} = n_i$.  This proves the lemma for the case $n = 3$.  Now take $\s \in S_\sigma$, and for each trivalent vertex $v \in V(\sigma)$, extract the paths associated to the edges connected to $v$.  For two vertices $v, v'$ connected by an edge $e$, this process yields the same number of paths in $e$, so we may glue these paths together any way we like.  The result is a graph on $n$ vertices (the leaves of $\sigma$).  Since this graph is a union of edges, $\s$ can be realized as a sum of the $\omega_{ij}$.  
\end{proof}

\subsection{Initial ideals from $\trop(I_{2, n})$}

Now we relate the associated graded algebras of the valuations $v_\r \in \mathcal{T}(n)$ to initial ideals $in_{\d(\r)}(I_{2, n})$ associated to points in the tropical variety $\trop(I_{2, n})$.  Let $\bx = \{x_{ij} \mid 1 \leq i < j \leq n\}.$

\begin{proposition}
For any $v_\r \in \mathcal{T}(n)$ the following hold:\\
\begin{enumerate}
\item the valuation $v_\r$ coincides with the weight quasi-valuation $\v_{\d(\r)}$ (see \ref{weightvaluation}),\\
\item the associated graded algebra $\gr_\r(\C[X])$ is isomorphic to $\C[\bx]/in_{\d(\r)}(I_{2, n})$,\\
\item if $\r \in C_\sigma \subset \mathcal{T}(n)$ satisfies $\r(e) \neq 0$, $\forall e \in E^{\circ}(\sigma)$, then $in_{\d(\r)}(I_{2, n})$
is the prime binomial ideal which vanishes on the generators $[\omega_{ij}]$ of the affine semigroup algebra $\C[S_\sigma]$. \\
\end{enumerate}

\end{proposition}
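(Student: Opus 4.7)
Prove the three parts in order, using the machinery from Sections \ref{valuations} and \ref{assgradTn}.

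For part (1), the strategy is to invoke the uniqueness built into Proposition \ref{mainKhovanskii}(3). Proposition \ref{assgrad}(3) tells us that the Pl\"ucker generators $p_{ij}$ form a Khovanskii basis for $v_\r$, so by Proposition \ref{mainKhovanskii}(3) there exists some $w$ with $v_\r = \v_w$. To pin down $w$, note that each $x_{ij}$ is a standard monomial for any monomial order (it is a variable of degree one while $I_{2,n}$ is generated in degree two), hence Proposition \ref{mainKhovanskii}(1)--(2) yield $\v_w(p_{ij}) = w_{ij}$. Comparing with $v_\r(p_{ij}) = \d(\r)_{ij}$ from Proposition \ref{pluckerevaluate} forces $w = \d(\r)$.

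Part (2) is then immediate from part (1) together with Proposition \ref{mainKhovanskii}(1).

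For part (3), the key claim is that under the hypothesis $\r(e) > 0$ for every internal edge $e$, the associated graded $\gr_\r(\C[X])$ is isomorphic to the affine semigroup algebra $\C[S_\sigma]$; combined with part (2), this identifies $in_{\d(\r)}(I_{2,n})$ with the toric ideal of $S_\sigma$. By Proposition \ref{multrule}(4), $W_\sigma(\s)\cdot W_\sigma(\s') \subset \bigoplus_{\s'' \preceq \s + \s'} W_\sigma(\s'')$; in the associated graded only the summands with $\langle \r, \s''\rangle = \langle \r, \s + \s'\rangle$ survive, equivalently with $\r(e)\bigl(\s(e)+\s'(e)-\s''(e)\bigr) = 0$ for every edge $e$. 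For a leaf edge $\ell$ the component $\s(\ell)$ comes from the $\Z$-grading of the attached copy of $\C[\A^2]$ and is therefore automatically preserved by multiplication, so $\s''(\ell) = \s(\ell) + \s'(\ell)$. For an internal edge the positivity $\r(e) > 0$ combined with $\s''(e) \leq \s(e)+\s'(e)$ forces equality there as well. Hence $\overline{b_\s}\cdot\overline{b_{\s'}} = C^{\s + \s'}_{\s, \s'}\,\overline{b_{\s+\s'}}$ in $\gr_\r(\C[X])$, with nonzero coefficient by the last clause of Proposition \ref{multrule}(4). This identifies $\gr_\r(\C[X])$ with $\C[S_\sigma]$. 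Tracking the isomorphism, the class of $x_{ij}$ maps to the class of $p_{ij}$, which lies in $W_\sigma(\omega_{ij})$ by the characterization of Pl\"ucker invariants used in the proof of Proposition \ref{pluckerevaluate}. Therefore $in_{\d(\r)}(I_{2,n})$ is the kernel of the surjection $\C[\bx] \to \C[S_\sigma]$ defined by $x_{ij} \mapsto [\omega_{ij}]$; this is the toric ideal of the saturated affine semigroup $S_\sigma$, which is prime and binomial.

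\textbf{Expected obstacle.} The only subtlety is in part (3): the hypothesis only supplies positivity of $\r$ on \emph{internal} edges, while the argument in Proposition \ref{assgrad}(3) was stated using positivity on all edges. The resolution is to notice that the leaf-edge contributions to the decomposition $\C[X] = \bigoplus_\s W_\sigma(\s)$ arise from a genuine $\Z$-grading of $\C[\A^2]$ rather than a filtration, so the leaf-edge components of $\s$ are preserved by multiplication regardless of the values of $\r$ on $L(\sigma)$. This observation is what lets the strict-grading argument succeed under the weaker hypothesis, and everything else is bookkeeping.
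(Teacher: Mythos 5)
The proposal is correct and follows the same route the paper indicates, namely combining Proposition \ref{mainKhovanskii} with Proposition \ref{assgrad}, but it fills in details the paper's two-sentence proof leaves implicit, and in part (3) it correctly addresses a genuine subtlety. The hypothesis $\r(e)\neq 0$ is imposed only on internal edges, while $\r(\ell)$ for $\ell\in L(\sigma)$ is unconstrained in $\R$ (it may even be negative), yet the proof of Proposition \ref{assgrad}(3) is carried out with an auxiliary $\r'$ positive on \emph{all} edges. Your observation that the leaf-edge components of $\s$ arise from the honest $\Z^n$-grading of $\C[\A^2\times\cdots\times\A^2]$ by multidegree, so that $\s''(\ell)=\s(\ell)+\s'(\ell)$ automatically for every term surviving in any associated graded, is exactly what closes that gap: the filtration is strict there regardless of the sign or vanishing of $\r(\ell)$, and then $\r(e)>0$ and $\s''(e)\leq\s(e)+\s'(e)$ on internal edges forces equality term by term. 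The only presentational quibble is that you assert the equivalence ``$\langle\r,\s''\rangle=\langle\r,\s+\s'\rangle$ iff $\r(e)(\s(e)+\s'(e)-\s''(e))=0$ for every $e$'' one sentence before you justify the leaf-edge step that actually makes it true; re-order those two sentences and the logic reads cleanly. Parts (1) and (2) are handled as the paper intends, and your identification $\v_w(p_{ij})=w_{ij}$ via the degree-one observation is the right way to pin down $w=\d(\r)$ from $\ev$n on the Pl\"ucker generators.
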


\begin{proof}
Part $(3)$ follows from $(2)$. Both $(1)$ and $(2)$ are a consequence of Theorem\ref{mainKhovanskii} and Proposition \ref{assgrad}.  
\end{proof}


\section{Maximal rank valuations and Newton-Okounkov cones of $X$}\label{toric}

In this section we use the divisor $D_\sigma \subset X_\sigma$ to construct maximal rank valuations on $\C[X]$, establishing $D_\sigma$ in the theory of \emph{Newton-Okounkov} bodies for the Grassmannian variety.  In particular, we show that $S_\sigma$ can be realized as the value semigroup of a valuation on $\C[X]$ which can be extracted from $D_\sigma$. 

\subsection{Maximal rank valuations on $\C[X]$}

There are many constructions of valuations on the Pl\"ucker algebra $\C[X]$ with representation-theoretic interpretations.  Alexeev and Brion \cite{Alexeev-Brion} give a construction in terms of Lusztig's dual canonical basis for any flag variety.  Kaveh \cite{Kaveh} then shows that the dual canonical basis construction can be recovered from a \emph{Parshin point} (see \ref{ppoint}) construction on a Bott-Samuelson resolution of the flag variety.  There are also many constructions of valuations coming from the theory of \emph{birational sequences}, which utilize the Lie algebra action  \cite{FFL}, \cite{Fang-Fourier-Littelmann},  and \cite{Fang-Fourier-Littelmann-birational}.  Finally, Cluster algebras (\cite{GHKK}, \cite{BFFHL}, \cite{RW}) provide another organizing tool for valuations.  The construction we give here is distinct from these approaches, and follows \cite{Manon-NOK} and can be derived from \cite{Kaveh-Manon-NOK}.

We pick a trivalent tree $\sigma$ and recall that branching basis $\B_\sigma \subset \C[X]$ constructed in Section \ref{assgradTn}.  Each member $b_\s \in \B_\sigma$ spans one of the spaces $W_\sigma(\s)$, and there is a bijection between the members of $\B_\sigma$ and the elements of the semigroup $S_\sigma$.   We select a total ordering $<$ on $E(\sigma)$; this induces a total ordering on $S_\sigma$ and the basis $\B_\sigma$ which we also denote by $<$.  In particular $\s < \s'$ if $-\s(e_i) < -\s'(e_i)$, where $e_i$ is the first edge (according to $<$) where $\s$ and $\s'$ disagree.  Now we define a function $\v_{\sigma, <}:\C[X] \setminus \{0\} \to \Z^{E(\sigma)}$ as follows:

\begin{equation}
\v_{\sigma, <}(\sum C_\s b_\s) = MIN\{\s \mid C_\s \neq 0\},\\
\end{equation}

\noindent
where $MIN$ is taken with respect to the ordering $<$ on $S_\sigma$. The following is essentially proved in \cite{Manon-NOK}.

\begin{proposition}\label{highrank}
The function $\v_{\sigma, <}$ is a discrete valuation on $\C[X]$ of rank $2n-3$ adapted to $\B_\sigma$ with value semigroup $S_\sigma$ and Newton-Okounkov cone $P_\sigma$. 
\end{proposition}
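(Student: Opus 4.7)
The plan is to verify each claim of the proposition in turn, with the main work concentrated on the multiplicativity axiom which upgrades a quasi-valuation to an honest valuation.

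First I would observe that $\v_{\sigma,<}$ is adapted to $\B_\sigma$ essentially by construction: the filtration spaces $F^{\v_{\sigma,<}}_\tau = \{f \mid \v_{\sigma,<}(f) \succeq \tau\}$ are, by the definition of taking the minimum over nonzero coefficients in the $\B_\sigma$-expansion, spanned by $\{b_\s \mid \s \succeq \tau\}$, so the intersection with $\B_\sigma$ is a basis. It is also immediate that $\v_{\sigma,<}(f+g) \succeq \min\{\v_{\sigma,<}(f), \v_{\sigma,<}(g)\}$ and $\v_{\sigma,<}(C) = 0$ for $C \in \C^\times$.

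The crux is showing the strict equality $\v_{\sigma,<}(fg) = \v_{\sigma,<}(f) + \v_{\sigma,<}(g)$. I would first handle the case of basis elements. By Proposition \ref{multrule}(4), we have $b_\s b_{\s'} = \sum C^{\s''}_{\s,\s'} b_{\s''}$ with the sum ranging over $\s'' \preceq \s+\s'$, and moreover the coefficient $C^{\s+\s'}_{\s,\s'}$ is nonzero. Under the ordering $<$ (in which $\s < \s''$ iff $\s(e_i) > \s''(e_i)$ at the first discriminating coordinate), an index $\s'' \preceq \s+\s'$ with $\s'' \neq \s+\s'$ must have $\s''(e) < (\s+\s')(e)$ at some $e$, so that at the first such discriminating coordinate $(\s+\s')(e) > \s''(e)$, forcing $\s+\s' < \s''$. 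Thus $\s+\s'$ is the $<$-minimum, giving $\v_{\sigma,<}(b_\s b_{\s'}) = \s + \s'$. For general $f = \sum C_\s b_\s$ and $g = \sum D_{\s'} b_{\s'}$ with $\tau = \v_{\sigma,<}(f), \tau' = \v_{\sigma,<}(g)$, I would then argue that the leading term $b_{\tau+\tau'}$ in $fg$ appears only from the product $b_\tau b_{\tau'}$: any other contributing pair $b_\s b_{\s'}$ with $C_\s D_{\s'} \neq 0$ has $\s \succeq \tau$ and $\s' \succeq \tau'$ (under $<$), and the same componentwise-comparison argument shows its leading index $\s+\s' > \tau + \tau'$ under $<$. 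Thus the $b_{\tau+\tau'}$ coefficient in $fg$ is $C_\tau D_{\tau'} \cdot C^{\tau+\tau'}_{\tau,\tau'} \neq 0$ and no cancellation occurs, so $\v_{\sigma,<}(fg) = \tau+\tau'$. I expect this no-cancellation bookkeeping to be the main (though not deep) obstacle.

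Finally, I would dispatch rank, value semigroup, and Newton-Okounkov cone together. The image of $\v_{\sigma,<}$ is exactly $\{\s \mid b_\s \in \B_\sigma\} = S_\sigma$, since each $b_\s$ realizes the value $\s$, and every nonzero $f$ has $\v_{\sigma,<}(f) \in S_\sigma$ by construction. So the value semigroup is $S_\sigma$. By Lemma \ref{finitesemigroup} (together with the fact that $S_\sigma = P_\sigma \cap L_\sigma$ is saturated and $L_\sigma \subset \Z^{E(\sigma)}$ has finite index), the group generated by $S_\sigma$ has rank $|E(\sigma)| = 2n-3$, giving the rank claim. The Newton-Okounkov cone, being the closure of $\R_{\geq 0} \cdot S_\sigma$, coincides with $P_\sigma$ by the same saturation.
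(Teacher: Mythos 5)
Your proof is correct and follows essentially the same route as the paper: both use part $(4)$ of Proposition \ref{multrule} to get $\v_{\sigma,<}(b_\s b_{\s'}) = \s + \s'$ (the $\preceq$-maximal index is also $<$-minimal), then conclude multiplicativity; you simply spell out the no-cancellation step for general $f,g$ that the paper compresses into one sentence, and you address the rank and cone claims that the paper leaves implicit. One small imprecision: in the general case the phrase ``the same componentwise-comparison argument'' is not quite what is needed, since there you only know $\s \geq \tau$ and $\s' \geq \tau'$ in the total order $<$ (not componentwise), so the correct justification is that $<$ is a group ordering compatible with addition; the conclusion is unaffected.
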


\begin{proof}
Let $F^{\sigma, <}_\s = \bigoplus_{\s < \s'}  W_\sigma(\s')$, then $F^{\sigma, <}_{\s} = \{ f \mid \v_{\sigma, <}(f) \geq \s\}$ by definition; this shows that $\v_{\sigma, <}$ is adapted to $\B_\sigma$.  Furthermore, for any $\s, \s'$ the product $F^{\sigma, <}_{\s} F^{\sigma, <}_{\s'}$ is a subspace of $F^{\sigma, <}_{\s + \s'}$ by Proposition \ref{multrule}.  This implies that $\v_{\sigma, <}$ is a quasi-valuation with value set equal to $S_\sigma$.  
To show that it is actually a valuation we observe that part $(4)$ of Proposition \ref{multrule} implies that $\v_{\sigma, <}(b_\s b_{\s'}) = \s + \s'$. This in turn implies that $\v_{\sigma, <}(fg) = \v_{\sigma, <}(f) + \v_{\sigma, <}(g)$, as $\v_{\sigma, <}$ will only see the values of the top components of $f$ and $g$ according to the ordering $<$. 
\end{proof}

It is also possible to show that there is a rank $|E(\sigma)|$ valuation $\v_{\sigma, <}: \C[X] \setminus \{0\} \to \Z^{E(\sigma)}$ for any non-trivalent $\sigma$.  In fact, one can apply \cite[Theorem 4]{Kaveh-Manon-NOK} to the integral generators of the extremal rays of $ev_n(C_\sigma) \subset \trop(I_{2, n})$ to recover any such valuation.  For this construction one needs to compute the values $v_{e_i}(p_{ij})$ for each edge $e_i \in E(\sigma)$ and each member of the Khovanskii basis of Pl\"ucker generators $p_{ij} \in \C[X]$.  This makes a $(2n-3) \times \binom{n}{2}$ matrix $M_{\sigma, <}$ which captures all the information of $\v_{\sigma, <}$. 



Proposition \ref{highrank} shows that the branching basis $\B_\sigma$ is adapted to the valuation $\v_{\sigma, <}$. Now, we describe a different basis which is also adapted to $\v_{\sigma, <}$.  Lemma \ref{finitesemigroup} shows that the Pl\"ucker generators $p_{ij}$ give a Khovanskii basis for $\v_{\sigma, <}$, so a basis of standard monomials in the $p_{ij}$ will be adapted to $\v_{\sigma, <}$ as well.  We say that $\sigma$ is a planar tree if the cyclic ordering on the leaves of $\sigma$ give an embedding of $\sigma$ into the plane.  In the proof of Lemma \ref{finitesemigroup} we can choose to decompose a weighting of $\sigma$ in a planar way, in particular it is always possible to construct the paths in a non-crossing way.  Furthermore, this reconstruction process shows that any two distinct planar arrangements give distinct weightings of $\sigma$.  Let $\B_+$ be the set of monomials $p^\alpha$ in the Pl\"ucker generators such that for any $i, j, k, \ell$ in cyclic order $\alpha_{ik}\alpha_{j\ell} = 0$; such monomials correspond to planar graphs on $[n]$ (\cite{HMSV}).  Our remarks imply the following proposition. 

\begin{proposition}\label{twobases}
The set $\B_+$ is an adapted basis of $\v_{\sigma, <}$ for any planar $\sigma$.  Furthermore, $\B_+$ and $\B_\sigma$ are related by upper-triangular transformations with respect the ordering on $S_\sigma$ induced by $<$.  



\end{proposition}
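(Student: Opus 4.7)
The plan is to reduce both statements to the combinatorial bijection between planar monomials and the semigroup $S_\sigma$ that is implicit in the proof of Lemma \ref{finitesemigroup}, and then invoke Proposition \ref{adaptedbasisproperties}. First I would compute $\v_{\sigma,<}(p^\alpha)$ for every Pl\"ucker monomial $p^\alpha = \prod p_{ij}^{\alpha_{ij}}$. Since Proposition \ref{highrank} already establishes that $\v_{\sigma,<}$ is a valuation, and since $p_{ij}$ spans the one-dimensional space $W_\sigma(\omega_{ij})$ (so it agrees with $b_{\omega_{ij}} \in \B_\sigma$ up to scalar), multiplicativity yields $\v_{\sigma,<}(p^\alpha) = \sum_{i<j} \alpha_{ij}\omega_{ij}$.

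Restricting to $\alpha$ indexing elements of $\B_+$, i.e.\ to non-crossing multigraphs on $[n]$, the assignment $p^\alpha \mapsto \v_{\sigma,<}(p^\alpha)$ becomes precisely the planar reconstruction described just before the proposition. The proof of Lemma \ref{finitesemigroup}, executed in a planar fashion, produces for each $\s \in S_\sigma$ a non-crossing arrangement of paths through $\sigma$ and thus an element of $\B_+$ mapping to $\s$, giving surjectivity; the observation that distinct planar arrangements give distinct weightings gives injectivity. Consequently $\B_+ \to S_\sigma$ is a bijection. Linear independence of $\B_+$ in $\C[X]$ is then immediate: any nontrivial relation $\sum c_\alpha p^\alpha = 0$ would be contradicted by applying $\v_{\sigma,<}$ to the term of minimal value, since the values are pairwise distinct on $\B_+$.

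For spanning, I would invoke the three-term Pl\"ucker relation $p_{ik}p_{j\ell} = p_{ij}p_{k\ell} + p_{i\ell}p_{jk}$ for $i,j,k,\ell$ in cyclic order to straighten any monomial in the Pl\"ucker generators into a $\C$-linear combination of non-crossing monomials; termination is the classical $\Gr_2$ straightening law cited from \cite{HMSV}. A cleaner alternative, bypassing straightening entirely, is a dimension count: since $\B_\sigma$ is adapted to $\v_{\sigma,<}$, the filtered piece $F^{\sigma,<}_\s$ has $\C$-dimension equal to $|\{\s' \in S_\sigma : \s \leq \s'\}|$, and the bijection $\B_+ \leftrightarrow S_\sigma$ furnishes exactly that many linearly independent elements of $F^{\sigma,<}_\s$ drawn from $\B_+$. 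Either route shows $\B_+$ is a basis with $\B_+ \cap F^{\sigma,<}_\s$ spanning $F^{\sigma,<}_\s$, so $\B_+$ is adapted by Definition \ref{adapted}. The upper-triangularity of the change-of-basis between $\B_+$ and $\B_\sigma$ then follows from part $(2)$ of Proposition \ref{adaptedbasisproperties} applied to the two adapted bases of the single valuation $\v_{\sigma,<}$. The only genuinely delicate point is the bijectivity assertion in the second paragraph, which relies on the planar refinement of the reconstruction argument in Lemma \ref{finitesemigroup}; everything else is bookkeeping.
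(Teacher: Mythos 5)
Your approach is essentially the paper's, which (very briefly) rests on the same two ingredients: the planar refinement of the reconstruction in Lemma~\ref{finitesemigroup} giving a bijection between $\B_+$ and $S_\sigma$, and the general fact that standard monomials in a Khovanskii basis form an adapted basis. You have made the implicit steps explicit, in particular the computation $\v_{\sigma,<}(p^\alpha)=\sum_{i<j}\alpha_{ij}\omega_{ij}$ via multiplicativity and the linear-independence argument from distinctness of values; these are correct and the final appeal to Proposition~\ref{adaptedbasisproperties}(2) for upper-triangularity is exactly right.

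One caveat on your ``cleaner alternative'': the dimension count as written is not airtight, because $F^{\sigma,<}_{\s}$ is generically infinite-dimensional ($|\{\s'\in S_\sigma : \s\le\s'\}|$ is infinite), and producing a linearly independent family of the same cardinality as a basis does not imply spanning in infinite dimensions. The count can be repaired by running it degree by degree: $\C[X]$ is graded by total Pl\"ucker degree, each graded piece is finite-dimensional, and both $\B_\sigma$ and $\B_+$ respect this grading, so the bijection with $S_\sigma$ restricts to finite sets where cardinality comparison does give spanning. As stated, however, that alternative has a gap; your primary route via the three-term Pl\"ucker straightening law of \cite{HMSV} is correct and suffices.
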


\subsection{A Parshin point construction of $\v_{\sigma, <}$}\label{ppoint}

In order to make a connection with the theory of Newton-Okounkov bodies, we present $\v_{\sigma, <}$ as a so-called Parshin point valuation (see \cite{Kaveh}, \cite{Kaveh-Khovanskii}, \cite{Lazarsfeld-Mustata}).  Roughly speaking, a Parshin point provides a higher rank generalization for the construction of a discrete valuation from a prime divisor on a normal variety.  Instead of taking degree along one height $1$ prime, one takes successive degrees along a flag of subvarieties.

\begin{definition}[Parshin point valuation]
 Let $p \in Y$ be a point in a variety of dimension $dim(Y) = n$, and $V_1 \supset \ldots \supset V_n = \{p\}$ be a flag of irreducible subvarieties.  We further assume that $V_i$ is locally cut out of $V_{i-1}$ at $p$ by $t_i$. This information defines a Parshin point, which we denote by $\mathcal{F}$.  For $f \in \C(Y)$ we define a valuation $\v_\mathcal{F}$ as follows.  Let $s_1 = ord_{t_1}(f)$, $f_1 = t_1^{-s_1}f \mid_{V_1}$ and then continue this way to get $s_i = ord_{t_i}(f_{i-1})$, $f_i = t_i^{-s_i}f_{i-1} \mid_{V_i}$.  We set $\v_\mathcal{F}(f) = (s_1, \ldots, s_n)$. 
\end{definition}

Let $\sigma$ be a trivalent tree, and let $<$ be a total ordering on $E(\sigma)$.  We use $<$ to define a flag of subvarieties on $X_\sigma$.  Using the total ordering $<$, we can label the edges $E(\sigma)$: $e_1, \ldots, e_{2n-3}$.  This defines a flag $D_{e_1} \supset D_{e_1, e_2} \supset \ldots \supset D_{E(\sigma)}$, where $D_S$ is the subvariety defined in Section \ref{divisorstructuresection}.  In particular, $D_{E(\sigma)}$ is the point in $X_\sigma$ defined by the maximal ideal $I_{E(\sigma)}$.

\begin{proposition}\label{flagparshinpoint}
The Parshin point $D_{e_1} \supset D_{e_1, e_2} \supset \ldots \supset D_{E(\sigma)}$ defines a valuation $\w_{\sigma, <}: \C[X]\setminus\{0\} \to \Z^{E(\sigma)}$ which coincides with $\v_{\sigma, <}$. 
\end{proposition}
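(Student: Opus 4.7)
The plan is to apply part (3) of Proposition~\ref{adaptedbasisproperties}: if $\v_{\sigma,<}$ and $\w_{\sigma,<}$ are both adapted to a common basis on which they agree, then they coincide. The natural candidate is the branching basis $\B_\sigma$, which is already adapted to $\v_{\sigma,<}$ by Proposition~\ref{highrank}, with $\v_{\sigma,<}(b_\s) = \s$. The proof thus reduces to showing $\w_{\sigma,<}(b_\s) = \s$ for every $\s \in S_\sigma$ (up to the sign convention built into the ordering used to define $\v_{\sigma,<}$). Adaptedness of $\B_\sigma$ to $\w_{\sigma,<}$ then comes for free: since $\w_{\sigma,<}$ is a valuation, a minimum of distinct values on a linear combination is uniquely attained and equals the valuation of the combination, so pairwise distinctness of $\{\w_{\sigma,<}(b_\s)\}$ (which follows from the sought equality) forces $\B_\sigma \cap F^{\w}_\alpha$ to span $F^{\w}_\alpha$ for every $\alpha$.

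To compute $\w_{\sigma,<}(b_\s)$, I would induct on the Parshin flag length $i \in \{1, \ldots, 2n-3\}$. The base case is immediate: the first coordinate is $ord_{D_{e_1}}(b_\s)$, which equals $v_{e_1}(b_\s)$ by Proposition~\ref{valscoincide}, and hence $-\s(e_1)$ by Proposition~\ref{multrule}(1). For the inductive step, the Parshin construction multiplies $b_\s$ by an appropriate power of a local uniformizer for $D_{e_1, \ldots, e_{i-1}}$ inside $D_{e_1, \ldots, e_{i-2}}$, restricts to $D_{e_1, \ldots, e_{i-1}}$, and takes order along $D_{e_1, \ldots, e_i}$ of the restriction. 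By Proposition~\ref{GITexpression}, $D_{e_1, \ldots, e_{i-1}}$ is itself a GIT quotient of $\overline{M}(\sigma, \{e_1, \ldots, e_{i-1}\})$ by $G(\sigma)$, with the $\overline{\SL}_2$ factors at edges $e_1, \ldots, e_{i-1}$ replaced by their boundary divisors. The isotypical decomposition into $W_\sigma(\s)$ components persists in this smaller GIT description, so the normalized restriction of $b_\s$ can be identified with a branching basis element for this reduced setup. Proposition~\ref{valscoincide} together with Proposition~\ref{multrule}(1), applied now to the factor at edge $e_i$, then yields the $i$-th Parshin coordinate as $-\s(e_i)$.

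The main obstacle will be the careful bookkeeping in the inductive step, namely verifying that the normalized restriction of $b_\s$ to $D_{e_1, \ldots, e_{i-1}}$ is indeed a nonzero branching basis element for the smaller GIT quotient rather than being annihilated prematurely. The key geometric input, from Section~\ref{sl2}, is that the boundary divisor of $\overline{\SL}_2$ is $\P^1 \times \P^1$, whose coordinate ring carries the same isotypical decomposition as $\C[\SL_2]$ but with Cartan multiplication; that is, it is $\C[\SL_2^c]$. Thus, at each inductive step, the factor $\C[\SL_2]$ at edge $e_j$ effectively gets replaced by $\C[\SL_2^c]$, the one-dimensional invariant subspaces $W_\sigma(\s)$ remain unchanged as vector spaces, and the branching basis element $b_\s$ survives under restriction as a nonzero generator of the same $W_\sigma(\s)$. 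Once this identification is in place, the induction closes and Proposition~\ref{adaptedbasisproperties}(3) completes the proof.
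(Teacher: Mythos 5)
Your proposal follows essentially the same strategy as the paper's proof: compute $\w_{\sigma,<}$ on the branching basis $\B_\sigma$ via the Parshin construction, show it agrees with $\v_{\sigma,<}$ on each $b_\s$ by inducting along the flag (using Proposition~\ref{valscoincide} at each step and the persistence of the $W_\sigma(\s)$-decomposition under restriction to boundary strata), and conclude by the adaptedness argument of Proposition~\ref{adaptedbasisproperties}. The only point the paper addresses more explicitly is the well-definedness of the Parshin point itself---namely, that the local equations $t_{e_k}$ exist, are $G(\sigma)$-fixed, cut out $D_{e_1,\ldots,e_k}$ inside $D_{e_1,\ldots,e_{k-1}}$, and satisfy $ord_{t_e}(t_{e'})=0$ for $e\neq e'$---which the paper establishes by lifting to the smooth product space $\overline{M}(\sigma)$ rather than working on the singular GIT quotients directly; you implicitly assume this but should verify it to close the induction.
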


\begin{proof}
We start by considering the ideal $J_{E(\sigma)} \subset \C[\overline{M}(\sigma)]$:

\begin{equation}
J_{E(\sigma)} = \bigoplus_{n\ge 0}\bigoplus_{\{\s\in\Z_{\ge 0}^{E(\sigma)} |\forall e\in E(\sigma),\s(e)\leq n, \ \exists e', \s(e')<n\}} \big[ \bigotimes_{e \in E^{\circ}(\sigma)} V(\s(e)) \otimes V(\s(e))\big] \otimes \big[\bigotimes_{\ell \in L(\sigma)} V(\s(\ell)) \big] t^n.\\
\end{equation}

\noindent
Notice that $J_{E(\sigma)} \cap \C[X_\sigma] = I_{E(\sigma)}$, $J_{E(\sigma)}$ is $G(\sigma)$-fixed, and $\O_{J_{E(\sigma)}}^{G(\sigma)} = \O_{I_{E(\sigma)}}$, where these are the local rings for the corresponding points on $\overline{M}(\sigma)$ and $X_\sigma$, respectively.   We work with the space $\overline{M}(\sigma)$ because it has the advantage of being smooth. Let  $t_e$ be the local equation for the prime divisor $\overline{M}(\sigma, e)$ (this element can be taken to coincide with $\bar{t}_e$ from Proposition \ref{valscoincide}).  Note that $t_e$ is $G(\sigma)$-fixed, so $t_e \in \O_{I_{E(\sigma)}}$.  Furthermore $\overline{M}(\sigma)$ is a product over $e \in E(\sigma)$, so we must have $ord_{t_e}(t_{e'})= 0$ when $e \neq e'$.  The subvarieties $D_{e_1, \ldots, e_k}$ with their local equations $t_{e_k}$ define a Parshin point of $X_\sigma$; in particular $t_{e_k}$ locally cuts out $D_{e_1, \ldots, e_k}$ in $D_{e_1, \ldots, e_{k-1}}$, because this is the case for the corresponding ideals in $\O_{J_{E(\sigma)}}$. 

We construct $D_S$ as a GIT quotient of the space $\overline{M}(\sigma, S)$ in \ref{DSgeom}.  We let $\overline{M}(\sigma, S)^o \subset \overline{M}(\sigma, S)$ be the subvariety obtained by the same product construction, only replacing $\overline{\SL}_2$ with $\SL_2$ (resp. $\P^2$ with $\A^2$ where appropriate) whenever $e \ (resp. \ \ell) \notin S$.    Now we choose $b_\s \in \C[X]$.  Proposition \ref{valscoincide} shows that $ord_{t_{e_1}} = -\s(e_1)$.  Regarding $t_{e_1}^{\s(e_1)}b_\s$ as a function on $\overline{M}(\sigma, e_1)^o$, we use the same argument in \ref{valscoincide} to show that $ord_{t_{e_2}}(t_{e_1}^{\s(e_1)}b_\s) = 0 - \s(e_2)$, where $t_{e_2}$ locally cuts out $\overline{M}(\sigma, e_1, e_2)$ along the boundary of $\overline{M}(\sigma, e_1)^o$.  Continuing this way, we obtain the valuation $\w_{\sigma, <}$, which has the property that $\w_{\sigma, <}(b_\s) = \v_{\sigma, <}(b_\s)$ for any $b_s \in \B_\sigma$.  Since both valuations are maximal rank, and since they take the same distinct values on the basis $\B_\sigma$, they must coincide (see Proposition \ref{adaptedbasisproperties}). 
\end{proof}

\section{Example}
Let's consider the simplest case: $n=4$.
There are three trivalent trees with 4 ordered leaves (Figures 4, 5, 6). 
The Tropical Grassmannian $\trop(I_{2, 4})\subset \R^6$ is a fan with three $5$-dimensional cones $\overline{B}_{\sigma_1}, \overline{B}_{\sigma_2}, \overline{B}_{\sigma_3}$  glued along a $4$-dimensional lineality space which can be constructed as the image of the map \[\psi:\R^4\rightarrow\R^6, (x_1, x_2, x_3, x_4)\mapsto (x_1+x_2, x_1+x_3, x_1+x_4, x_2+x_3, x_2+x_4, x_3+x_4): \]

\[\overline{B}_{\sigma_1}: w_{13}+w_{24}=w_{14}+w_{23}\le w_{12}+w_{34},\quad\mathrm{image}(\psi)+\R_{\ge 0} e_{13}+e_{14}+e_{23}+e_{24}\]
\[\overline{B}_{\sigma_2}: w_{12}+w_{34}=w_{14}+w_{23}\le w_{13}+w_{24},  \quad\mathrm{image}(\psi)+\R_{\ge 0}e_{12}+e_{14}+e_{23}+e_{34}\]
\[\overline{B}_{\sigma_3}: w_{12}+w_{34}=w_{13}+w_{24}\le w_{14}+w_{23},  \quad\mathrm{image}(\psi)+\R_{\ge 0} e_{12}+e_{13}+e_{24}+e_{34},\] where 
$w_{ij}$ are the coordinates of $\R^6=\R^{4\choose 2}$ and  $e_{ij}$ are the standard basis of $\R^6=\R^{4\choose 2}$.  Note that $\mathrm{image}(\psi)$ is spanned by 
\smallskip
$\psi(e_1)=(1, 1, 1, 0, 0, 0),$ 
$\psi(e_2)=(1, 0, 0, 1, 1, 0),$
$\psi(e_3)=(0, 1, 0, 1, 0, 1),$
$\psi(e_4)=(0, 0, 1, 0, 1, 1)$.

\smallskip
\begin{figure}[!htb]\centering
\begin{minipage}{0.32\textwidth}
\begin{tikzpicture}[
every edge/.style = {draw=black,very thick, ->},
 vrtx/.style args = {#1/#2}{%
      circle, draw, thick, fill=white,
      minimum size=5/2mm, label=#1:#2}]
\node(A) [vrtx=left/] at (0, 0) {$v_2$};
\node(B) [vrtx=left/] at (0,4/2) {$v_1$};
\node(C) [vrtx=left/] at (-3.46/2,-2/2) {$\ell_4$};
\node(D) [vrtx=left/] at (3.46/2, -2/2) {$\ell_3$};
\node(E) [vrtx=left/] at (-3.46/2,6/2){$\ell_1$};
\node(F) [vrtx=left/] at (3.46/2,6/2){$\ell_2$};

\path   (A) edge (B)
           (A) edge (C)
	(A) edge (D)
	(B) edge (E)
	(B) edge (F);
\end{tikzpicture}
\caption{ $\sigma_1=(\{1,2\},\{3,4\})$}
\end{minipage}
\begin{minipage}{0.32\textwidth}
\begin{tikzpicture}[
every edge/.style = {draw=black,very thick, ->},
 vrtx/.style args = {#1/#2}{%
      circle, draw, thick, fill=white,
      minimum size=5/2mm, label=#1:#2}]
\node(A) [vrtx=left/] at (0, 0) {$v_2$};
\node(B) [vrtx=left/] at (0,4/2) {$v_1$};
\node(C) [vrtx=left/] at (-3.46/2,-2/2) {$\ell_2$};
\node(D) [vrtx=left/] at (3.46/2, -2/2) {$\ell_4$};
\node(E) [vrtx=left/] at (-3.46/2,6/2){$\ell_1$};
\node(F) [vrtx=left/] at (3.46/2,6/2){$\ell_3$};

\path   (A) edge (B)
           (A) edge (C)
	(A) edge (D)
	(B) edge (E)
	(B) edge (F);
\end{tikzpicture}
\caption{ $\sigma_2=(\{1,3\},\{2,4\})$}
\end{minipage}
\begin{minipage}{0.32\textwidth}
\begin{tikzpicture}[
every edge/.style = {draw=black,very thick, ->},
 vrtx/.style args = {#1/#2}{%
      circle, draw, thick, fill=white,
      minimum size=5/2mm, label=#1:#2}]
\node(A) [vrtx=left/] at (0, 0) {$v_2$};
\node(B) [vrtx=left/] at (0,4/2) {$v_1$};
\node(C) [vrtx=left/] at (-3.46/2,-2/2) {$\ell_2$};
\node(D) [vrtx=left/] at (3.46/2, -2/2) {$\ell_3$};
\node(E) [vrtx=left/] at (-3.46/2,6/2){$\ell_1$};
\node(F) [vrtx=left/] at (3.46/2,6/2){$\ell_4$};

\path   (A) edge (B)
           (A) edge (C)
	(A) edge (D)
	(B) edge (E)
	(B) edge (F);
\end{tikzpicture}
\caption{$\sigma_3=(\{1,4\},\{2,3\})$}
\end{minipage}
\end{figure}

Now, we see how to recover the tropical geometry and Newton-Okounkov information from the compactification $X\subset X_{\sigma_1} \supset D_{\sigma_1}$.  From each component $D_e \subset D_{\sigma_1},  (e\in E(\sigma_1))$, we get a vector in  $\trop(I_{2, 4})\subset \R^6$ using Proposition \ref{valscoincide} and Proposition \ref{pluckerevaluate}: 

\[\begin{array}{ccl}
D_{\ell_1}&\mapsto&(ord_{D_{\ell_1}}(p_{12}), ord_{D_{\ell_1}}(p_{13}), ord_{D_{\ell_1}}(p_{14}), ord_{D_{\ell_1}}(p_{23}), ord_{D_{\ell_1}}(p_{24}), ord_{D_{\ell_1}}(p_{34}))\\
              && =  (v_{{\ell_1}}(p_{12}), v_{{\ell_1}}(p_{13}), v_{{\ell_1}}(p_{14}), v_{{\ell_1}}(p_{23}), v_{{\ell_1}}(p_{24}), v_{{\ell_1}}(p_{34}))
= (1, 1, 1, 0, 0, 0)=\psi(e_1)\\

D_{\ell_2}&\mapsto&(ord_{D_{\ell_2}}(p_{12}), ord_{D_{\ell_2}}(p_{13}), ord_{D_{\ell_2}}(p_{14}), ord_{D_{\ell_2}}(p_{23}), ord_{D_{\ell_2}}(p_{24}), ord_{D_{\ell_2}}(p_{34}))\\
              && =  (v_{{\ell_2}}(p_{12}), v_{{\ell_2}}(p_{13}), v_{{\ell_2}}(p_{14}), v_{{\ell_2}}(p_{23}), v_{{\ell_2}}(p_{24}), v_{{\ell_2}}(p_{34}))
= (1, 0, 0, 1, 1, 0)=\psi(e_2)\\

D_{\ell_3}&\mapsto&(ord_{D_{\ell_3}}(p_{12}), ord_{D_{\ell_3}}(p_{13}), ord_{D_{\ell_3}}(p_{14}), ord_{D_{\ell_3}}(p_{23}), ord_{D_{\ell_3}}(p_{24}), ord_{D_{\ell_3}}(p_{34}))\\
              && =  (v_{{\ell_3}}(p_{12}), v_{{\ell_3}}(p_{13}), v_{{\ell_3}}(p_{14}), v_{{\ell_3}}(p_{23}), v_{{\ell_3}}(p_{24}), v_{{\ell_3}}(p_{34}))
= (0, 1, 0, 1, 0, 1)=\psi(e_3)\\

D_{\ell_4}&\mapsto&(ord_{D_{\ell_4}}(p_{12}), ord_{D_{\ell_4}}(p_{13}), ord_{D_{\ell_4}}(p_{14}), ord_{D_{\ell_4}}(p_{23}), ord_{D_{\ell_4}}(p_{24}), ord_{D_{\ell_4}}(p_{34}))\\
              && =  (v_{{\ell_4}}(p_{12}), v_{{\ell_4}}(p_{13}), v_{{\ell_4}}(p_{14}), v_{{\ell_4}}(p_{23}), v_{{\ell_4}}(p_{24}), v_{{\ell_4}}(p_{34}))
= (0, 0, 1, 0, 1, 1)=\psi(e_4)\\
D_{e^{\circ}}&\mapsto&(ord_{D_{e^{\circ}}}(p_{12}), ord_{D_{e^{\circ}}}(p_{13}), ord_{D_{e^{\circ}}}(p_{14}), ord_{D_{e^{\circ}}}(p_{23}), ord_{D_{e^{\circ}}}(p_{24}), ord_{D_{e^{\circ}}}(p_{34}))\\
              && =  (v_{{e^{\circ}}}(p_{12}), v_{{e^{\circ}}}(p_{13}), v_{{e^{\circ}}}(p_{14}), v_{{e^{\circ}}}(p_{23}), v_{{e^{\circ}}}(p_{24}), v_{{e^{\circ}}}(p_{34}))
= (0,1,1,1,1,0)=e_{13}+e_{14}+e_{23}+e_{24},\\
\end{array}\]

where $e^{\circ}$ is the non-leaf edge.  

We describe them as row vectors of the following matrix, which span the maximal cone $\overline{B}_{\sigma_1}$ of $\trop(I_{2, 4})$.

\[ \left(\begin{array}{c}
 ord_{D_{\ell_1}}\\
 ord_{D_{\ell_2}}\\
 ord_{D_{\ell_3}}\\
 ord_{D_{\ell_4}}\\
 ord_{D_{e^{\circ}}}
\end{array}\right)
=
\begin{blockarray}{cccccc}
p_{12}&p_{13}&p_{14}&p_{23}&p_{24}&p_{34}\\
\begin{block}{(cccccc)}
1&1&1&0&0&0\\
1&0&0&1&1&0\\
0&1&0&1&0&1\\
0&0&1&0&1&1\\
0&1&1&1&1&0\\
\end{block}
\end{blockarray}\]

For Newton-Okounkov information, we fix a total order $<$ on $E(\tree)$, for example,  $\ell_1>\ell_2>\ell_3>\ell_4>e^{\circ}$, the order we used for the tropical geometry above.  It corresponds to the flag of subvarieties (Parshin point), $D_{\ell_1}\supset D_{\ell_1, \ell_2}\supset\cdots\supset D_{E(\sigma)}$ and the valuation $\v_{\sigma, <}$ (see Proposition \ref{flagparshinpoint}). Now we compute the values of Pl\"ucker generators under this valuation:

$ \v_{\tree,<}(p_{12})=(1, 1, 0, 0, 0) $.

$\v_{\tree,<}(p_{13})=(1, 0, 1, 0, 1) $.

$\v_{\tree,<}(p_{14})=(1, 0, 0, 1, 1) $.

$\v_{\tree,<}(p_{23})=(0, 1, 1, 0, 1) $.

$\v_{\tree,<}(p_{24})=(0, 1, 0, 1, 1) $

$\v_{\tree,<}(p_{34})=(0, 0, 1, 1, 0) $.

These coincide with the column vectors of the matrix above, which generate the semigroup $S_{\sigma}$. Thus from the perspective of the compactification $X_{\tree}$, we have a unified understanding of the tropical geometry and Newton-Okounkov theory for the affine cone $X$ of the Grassmannian $Gr_2(\C^4)$.

Also, these generators of $S_{\sigma}$ correspond to the parametrization of the toric variety defined by  $\C[S_{\sigma}]$:

\[(\C^*)^5\rightarrow \C^6, (t_1, t_2, t_3, t_4, t_5)\mapsto (t_1t_2, t_1t_3t_5, t_1t_4t_5, t_2t_3t_5, t_2t_4t_5, t_3t_4)\]

The ideal presenting $\C[S_{\sigma}]$ is equal to the kernel of the homomorphism, 
\[\C[p_{12}, p_{13}, p_{14},p_{23}, p_{24}, p_{34}]\rightarrow \C[y_{e_1}, y_{e_2}, y_{e_3}, y_{e_4}, y_{e_5}], p_{12}\mapsto y_{e_1}y_{e_2}, p_{13}\mapsto y_{e_1}y_{e_3}y_{e_5},..., p_{34}\mapsto y_{e_3}y_{e_4},\] which is the 
 the principal ideal generated by $p_{13}p_{24}-p_{14}p_{23}$. Now, this ideal is equal to the initial ideal $in_{\sigma}(I_{2, 4})$ associated to the cone $B_{\sigma}$ in  $\trop(I_{2, 4})$.

\bibliographystyle{alpha}
\bibliography{Arxiv_Compactifications_and_the_Grassmannian}

\end{document}